\newcommand{\ttE}{\texttt{E}}
\newcommand{\ttV}{\texttt{V}}
\newcommand{\onevec}{\vec{1}}
\newcommand{\Divergence}{\mathrm{div}}
\newcommand{\Unif}{\mathrm{Unif}}
\newcommand{\ind}{\perp\!\!\!\!\perp}
\begin{document}
	
\title{ {\bf Two-Sample Testing with a Graph-Based Total Variation Integral Probability Metric} }
\author{Alden Green, Sivaraman Balakrishnan, Ryan J. Tibshirani}
\maketitle
\RaggedRight

\begin{abstract}
	We consider a novel multivariate nonparametric two-sample testing problem where, under the alternative, distributions $P$ and $Q$ are separated in an integral probability metric over functions of bounded total variation (TV IPM). We propose a new test, the graph TV test, which uses a graph-based approximation to the TV IPM as its test statistic. We show that this test, computed with an $\varepsilon$-neighborhood graph and calibrated by permutation, is minimax rate-optimal for detecting alternatives separated in the TV IPM. As an important special case, we show that this implies the graph TV test is optimal for detecting spatially localized alternatives, whereas the $\chi^2$ test is provably suboptimal. Our theory is supported with numerical experiments on simulated and real data.
\end{abstract}

\section{Introduction}

In nonparametric two-sample testing, one observes independent samples $X_1,\ldots,X_{n_1} \sim P$ and $Y_1,\ldots,Y_{n_2} \sim Q$, all belonging to $\Rd$, and uses these as evidence to determine whether or not to reject the null hypothesis that $P = Q$. This is a classical statistical problem with many applications, and the problem has also received renewed interest in the machine learning community.

In this last context, a good deal of recent attention has been paid to test statistics involving \emph{integral probability metrics} (IPMs), also sometimes referred to as maximum mean discrepancies (MMDs)~\citep{gretton2012kernel}. Originally introduced in the probability theory community~\citep{muller1997integral}, an IPM is a distance between probability measures $P$ and $Q$ of the form
\begin{equation*}
d_{\mc{F}}(P,Q) = \sup_{f \in \mc{F}} \; \mathbb{E}_{P}[f(X)] - \mathbb{E}_{Q}[f(Y)],
\end{equation*}
i.e. it measures the maximum difference of means over all functions $f$ in a function class $\mc{F}$.


The statistical properties of an IPM-based test statistic are (obviously) determined by $\mc{F}$. For univariate distributions, different choices of $\mc{F}$ recover a number of fundamental probability metrics such as the total variation, Cramer-von-Mises~\citep{cramer1928composition,vonmises1933wahrscheinlichkeit}, Wasserstein 1-~\citep{kantorovich1942translocation,vaserstein1969markov}, and Kolmogorov-Smirnov~\citep{kolmogorov1933sulla,smirnov1948table} distances. For multivariate data, a popular IPM takes $\mc{F}$ to be a reproducing kernel Hilbert space~\citep{gretton2012kernel}. Consideration of each of these distances leads to tests with minimax-optimal power against different classes of alternatives. Indeed, a principal advantage of IPMs is that it is possible to design tests with high power against specific kinds of alternatives, simply by changing the collection of functions $\mc{F}$.

In this paper we introduce a new multivariate nonparametric two-sample test based on an IPM. Specifically, we consider the distance obtained by taking $\mc{F}$ to be the space of functions with finite \emph{total variation}. The total variation of an integrable function $f \in L^1(\Rd)$ is 
\begin{equation}
\label{eqn:tv}
\mathrm{TV}(f) := \sup \; \Bigl\{\int f \cdot \mathrm{div}\phi: \phi = (\phi_1,\ldots,\phi_d) \in C_c^{1}(\Rd;\Rd), ~~ \|\phi(x)\|_2 \leq 1 ~~\textrm{for all $x \in \Reals^d$}\Bigr\},
\end{equation}
where $\Divergence(\phi) := \sum_{i = 1}^{d} \partial \phi_i/\partial x_i$ is the divergence of a smooth vector field $\phi$, and $C_c^{1}(\Rd;\Rd)$ is the set of compactly supported smooth vector fields $\phi: \Rd \to \Rd$. The \emph{total variation IPM} we consider is
\begin{equation}
	\label{eqn:tv-ipm}
	d_{\mathrm{BV}}(P,Q) := \sup_{f:\TV(f) \leq 1} \; \E_{P}[f(X)] - \E_{Q}[f(Y)].
\end{equation} 
The subscript $\BV$ refers to the fact that functions with $\TV(f) < \infty$ are commonly referred to as functions of \emph{bounded variation}, denoted $\BV(\Rd)$. The notation is also intended to make clear that the IPM defined in~\eqref{eqn:tv-ipm} is distinct from the total variation distance between two probability measures, which is $d_{\TV}(P,Q) := \sup |P(A) - Q(A)|$.

Models based on TV smoothness are widely used in fields like image processing~\citep{rudin1992nonlinear,vogel1996iterative,chambolle1997image,chan2000high}, and nonparametric regression~\citep{koenker1994quantile,mammen1997locally,tibshirani2014adaptive}. In part, this is because TV is a heterogeneous notion of smoothness: speaking loosely, it allows functions to be wiggly or even discontinuous in certain parts of their input space as long as they are sufficiently smooth over the rest of the domain. Additionally, TV gives a reasonable notion of regularity in many instances. For example, in one dimension the total variation of a step function is simply the sum of the heights of the steps. In fact, one can use this property, along with the relationship between a CDF and the expectation of a step function, to show that when $d = 1$ the TV IPM~\eqref{eqn:tv} is equal to the Kolmogorov-Smirnov distance~\citep{muller1997integral}. This means the univariate TV IPM is equivalent to a fundamental nonparametric distance. However, we are not aware of any work investigating the TV IPM for $d \geq 2$. 

The focus of our article is a multivariate hypothesis testing problem where, under the alternative $P \neq Q$, the distance $d_{\BV}(P,Q)$ between $P$ and $Q$ in the TV IPM is sufficiently large. To come up with a test statistic for this problem, we need a way of estimating the TV IPM from samples $X_1,\ldots,X_{n_1} \sim P$, $Y_{1},\ldots,Y_{n_2} \sim Q$, and this turns out to be somewhat subtle. A seemingly natural statistic is the plug-in estimate
$$
d_{\BV}(P_{n_1},Q_{n_2}) = \sup_{\TV(f) \leq 1}\Big\{\frac{1}{n_1}\sum_{i = 1}^{n_1} f(X_i) - \frac{1}{n_2}\sum_{j = 1}^{n_2} f(Y_j)\Big\}.
$$
Indeed these kinds of empirical IPMs are commonly used as estimates and test statistics, for instance when $\mc{F}$ is an RKHS, or $\mc{F}$ is the collection of univariate functions of bounded variation. For our problem, however, the plug-in estimate is not suitable, since when $d \geq 2$ the TV IPM between two empirical measures is infinite: $d_{\BV}(P_{n_1},Q_{n_2}) = \infty$.\footnote{To see this explicitly, consider a suitable sequence of bump functions $f_r(x) = r^{-(d - 1)}\1(x \in B(X_i,r))$, centered around some $X_i$ that is distinct from any $Y_j$. Each $f_r$ has unit TV and so is feasible for the optimization problem in~\eqref{eqn:tv-ipm}, but by driving $r \to 0$ we can blow the criterion up to $\infty$.} This is fundamentally because $\BV(\Rd)$ is a quite rich function class -- compared to (say) an RKHS -- for which point evaluation is not continuous.

Instead we propose a new test, the \emph{graph TV test}, involving a test statistic that uses a graph-based approximation to the TV IPM. In more detail, let $G = (\texttt{V},\texttt{E})$ be an undirected, unweighted graph with $n = n_1 + n_2$ vertices at the combined samples $\texttt{V} = \mc{X}_{n_1} \cup \mc{Y}_{n_2}$, where $\mc{X}_{n_1} = \{X_1,\ldots,X_{n_1}\}$ and $\mc{Y}_{n_2} = \{Y_1,\ldots,Y_{n_2}\}$. Then the \emph{graph TV IPM} is the solution to the finite-dimensional optimization problem
\begin{equation}
\begin{aligned}
\label{eqn:graph-tv-ipm}
& \max \Big\{\frac{1}{n_1}\sum_{i = 1}^{n_1} f(X_i) - \frac{1}{n_2}\sum_{j = 1}^{n_2} f(Y_j)\Big\} \\
& \textrm{subject to}~~ \sum_{i,j = 1}^{n} \big|f(Z_i) - f(Z_j)\big| \cdot \1(\{Z_i,Z_j\} \in \texttt{E}) \leq 1,
\end{aligned}
\end{equation}
where $\mc{Z}_n = \{Z_1,\ldots,Z_n\} = \mc{X}_{n_1} \cup \mc{Y}_{n_2}$ is the combined set of samples, and $\ttE \subseteq \mc{Z}_n \times \mc{Z}_n$. Defining the optimization domain in terms of a functional defined over a graph $G$ leads to a test statistic that is practically reasonable to compute, and that is finite when $G$ is connected.

Theoretically, we study our hypothesis testing problem from the perspective of the \emph{detection boundary}: informally, this is the minimum distance  $d_{\BV}(P,Q)$ between $P$ and $Q$ required for some level-$\alpha$ test to have non-trivial power, meaning power of at least (say) $\frac{1}{2}$. Our main results characterize the minimax-optimal rate of convergence of the detection boundary under suitable regularity conditions, and show that the graph TV test, suitably tuned and calibrated by permutation, achieves this optimal rate. A more detailed summary of these theoretical results is given in Section~\ref{subsec:main-results}. First, we demonstrate several properties of the graph TV test in a simple but revealing empirical example.

\subsection{Illustrative example}

For our illustrative example, we sample $n_1 = n_2 = 1000$ observations from the two-dimensional mixture distributions
\begin{align*}
	P & = (1 - \pi) P_0 + \pi\Unif(B(x_P,\eta)) \\
	Q & = (1 - \pi) P_0 + \pi \Unif(B(x_Q,\eta)), \\
	x_P & = (1 ~ 5)^{\top}, x_Q = (5 ~ 1)^{\top}, \eta = 0.5, \pi = 0.02.
\end{align*}
Here $P_0$ is the product distribution of two independent Laplace random variables, and $\Unif(B(x,\eta))$ is the uniform distribution over a ball centered at $x \in \R^2$, with radius $\eta$. The important thing to note about these distributions is that the level sets of $P - Q$ have sparse support compared to the level sets of $P$ or $Q$. We will see later that these kinds of \emph{spatially localized} departures from the null (or just spatially localized alternatives, for short) play an important role in understanding the hypothesis testing problem where $P,Q$ are separated in TV IPM.

Figure~\ref{fig:illustrative-example} measures the power of the graph TV test using a $10$-nearest neighbors graph. The number of neighbors is chosen to make the graph sparse while still being connected with high probability; theoretical support for this choice is given in Section~\ref{sec:optimality}. As a benchmark, we compare to the popular kernel MMD test, computed using a Gaussian kernel with various bandwidths. The receiver-operator characteristic curve of each test is plotted, to examine power at many different levels of empirical type I error. We can see that the graph TV test has better power than the kernel test, across various levels of type I error, regardless of the choice of bandwidth.

\begin{figure}[htb]
	\centering
	\begin{minipage}{.55\textwidth}
		\begin{subfigure}[t]{.49\textwidth}
			\includegraphics[width=\linewidth]{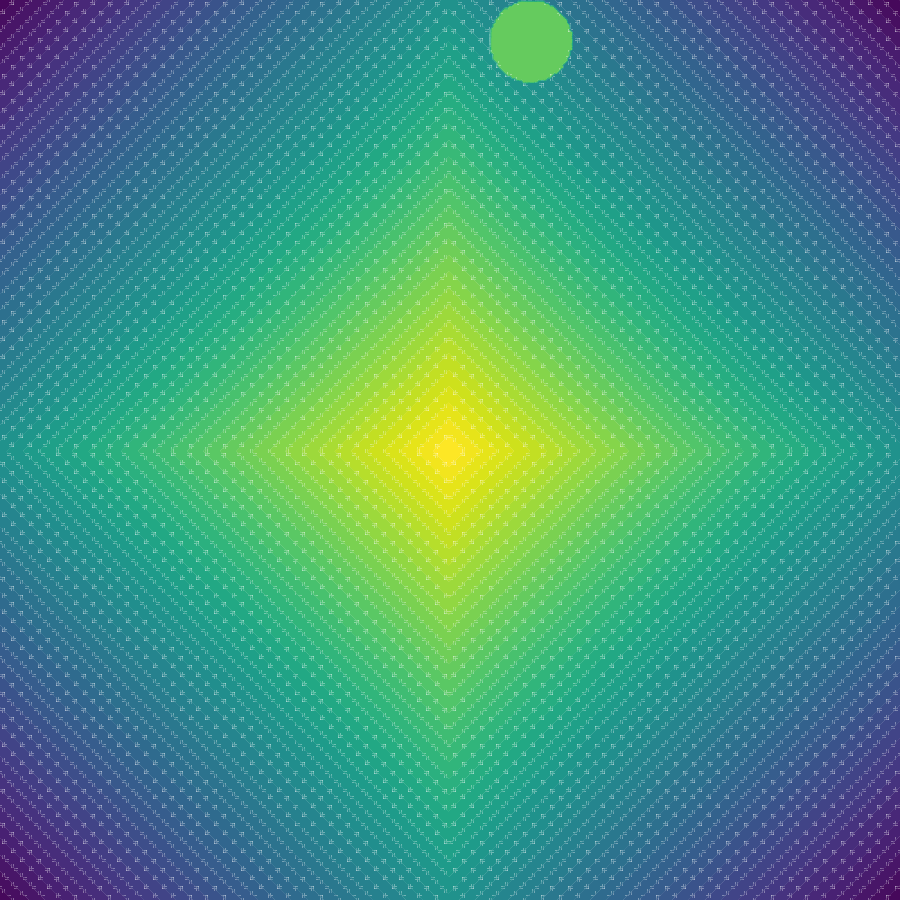}
			\caption{Level sets of $P$}
		\end{subfigure}
		\hfill
		\begin{subfigure}[t]{.49\textwidth}
			\includegraphics[width=\linewidth]{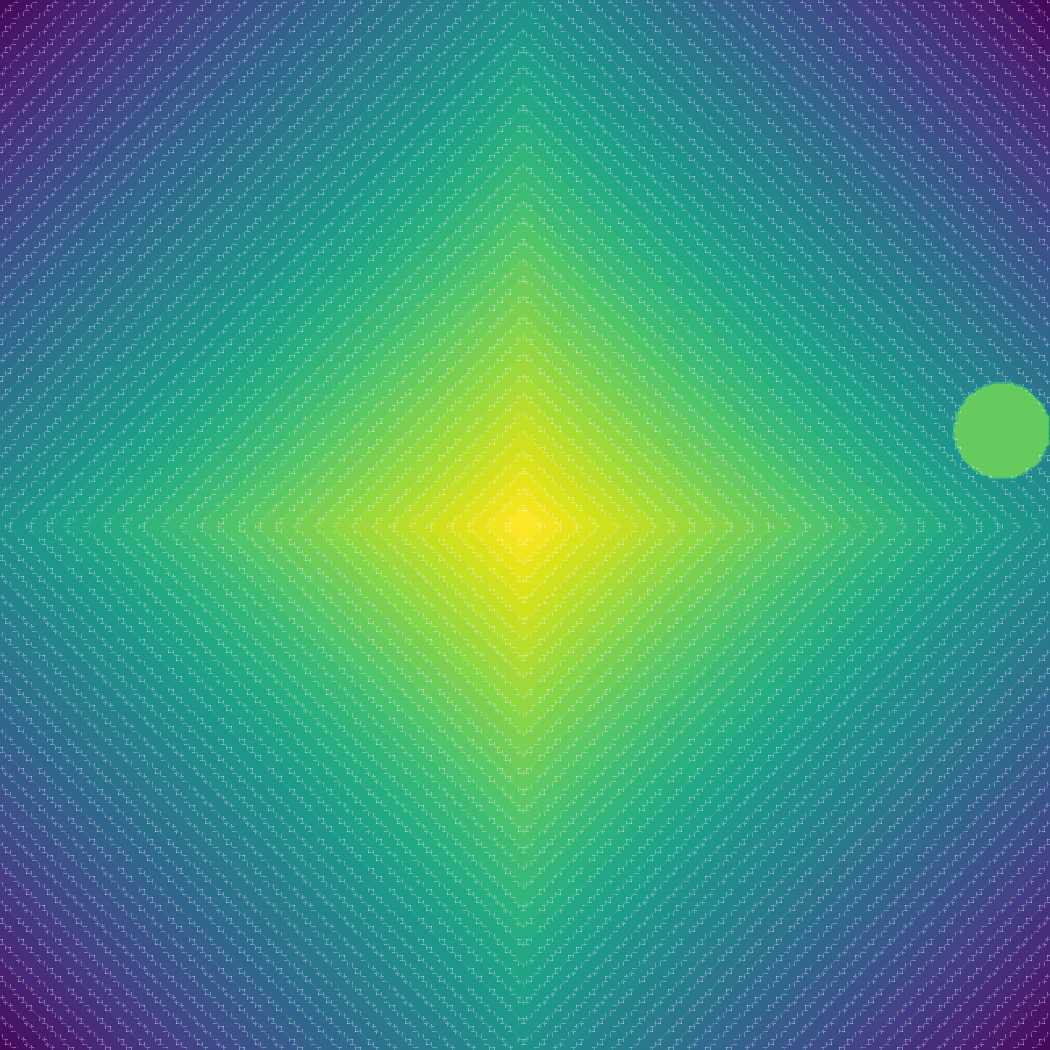}
			\caption{Level sets of $Q$}
		\end{subfigure} \\
		\begin{subfigure}[t]{.49\textwidth}
			\includegraphics[width=\linewidth]{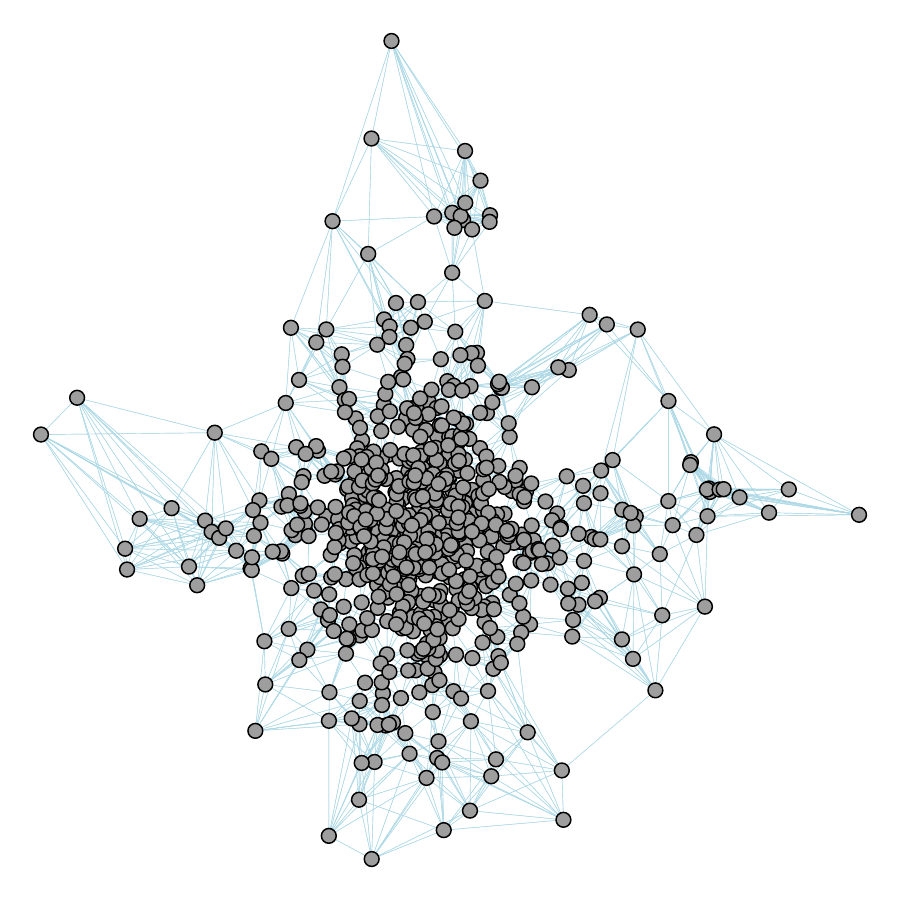}
			\caption{$10$-nearest neighbors graph}
		\end{subfigure}
		\hfill
		\begin{subfigure}[t]{.49\textwidth}
			\includegraphics[width=\linewidth]{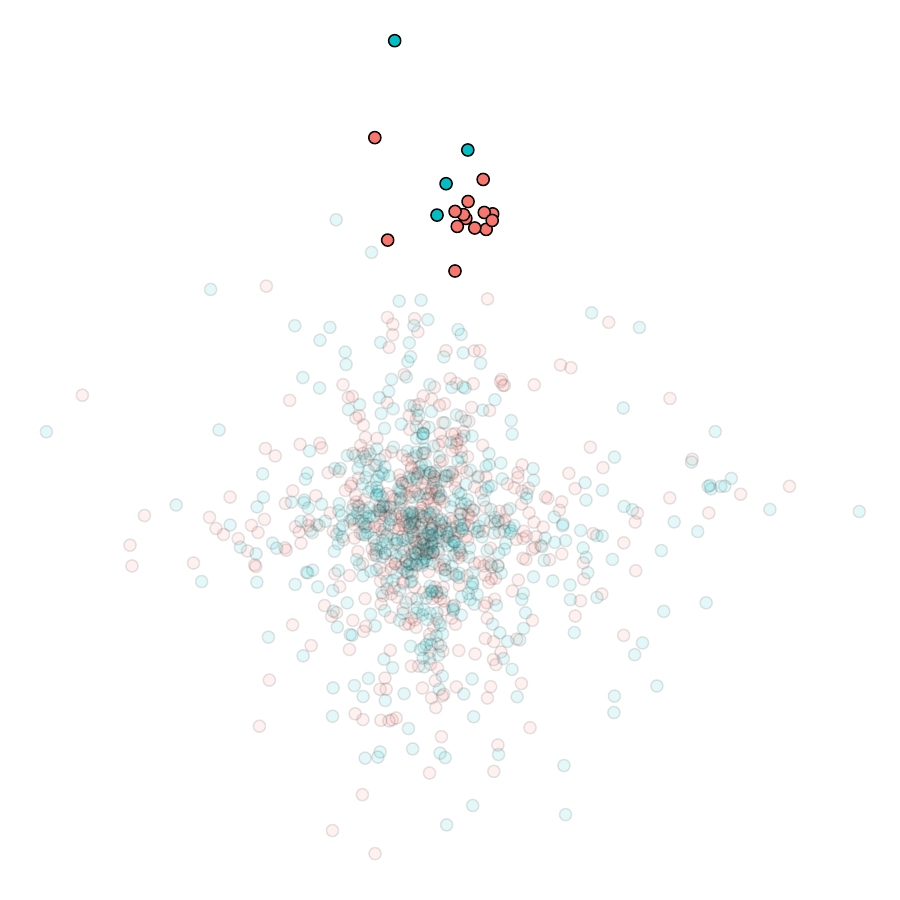}
			\caption{Graph TV IPM witness}
		\end{subfigure}
	\end{minipage}
	\hfill
	\begin{minipage}{.4\textwidth}
		\begin{subfigure}[t]{.99\textwidth}
			\includegraphics[width=\linewidth]{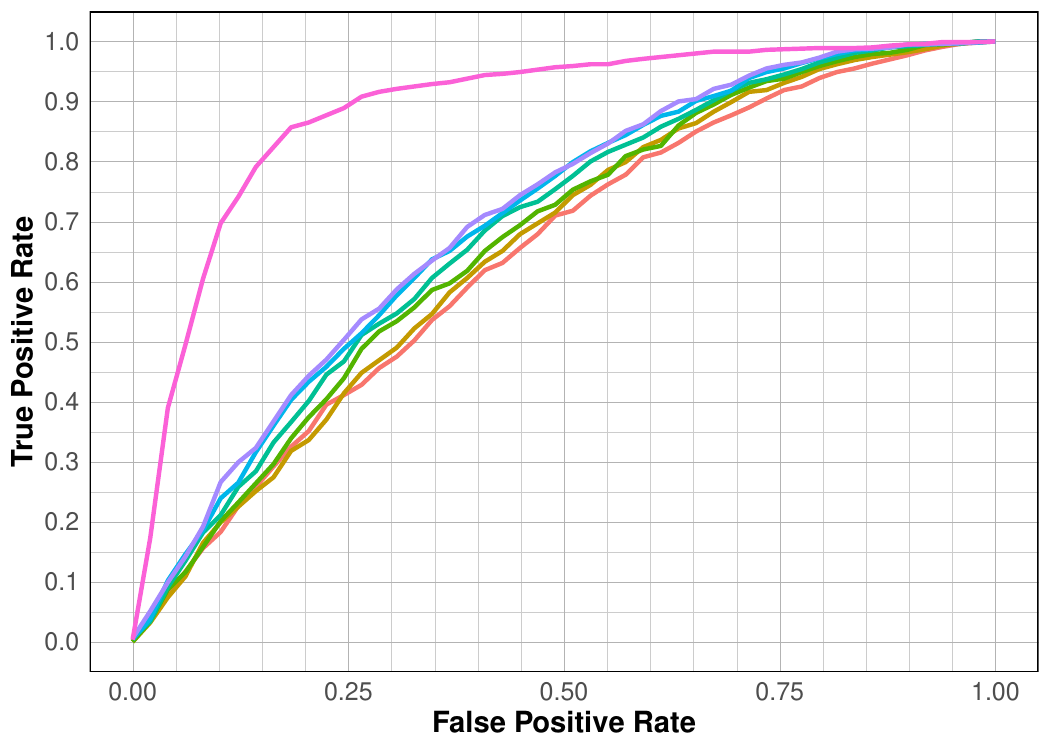}
			\caption{ROC curve of graph TV test (\textcolor{pink}{pink}) and Gaussian kernel MMD tests.}
		\end{subfigure}
	\end{minipage}
	\caption{Illustrative example.}
	\label{fig:illustrative-example}
\end{figure}

To better understand why the graph TV test effectively solves this problem, we can look at its \emph{witness} function. A witness of an IPM is a function $f^{\ast} \in \mc{F}$ that achieves the maximum difference in means. Qualitatively speaking, examining regions where the witness of an IPM is ``large'' can thus be an effective way of interpreting why an IPM-based test has rejected, or failed to reject, the null. This can be particularly useful when the witness is just the indicator function of a set, as is the case for the total variation distance or univariate Kolmogorov-Smirnov distance. Later, we show that the witness function of the graph TV IPM is also always an indicator function of a set, and so can be interpreted in the same way. 


In this specific example, Figure~\ref{fig:illustrative-example}d show that the witness of the graph TV IPM puts all of its mass on a small region around $x_P$. This is exactly where the density of $P$ is much larger than the density of $Q$. Put simply, the IPM correctly identifies a region where $P$ and $Q$ are significantly different. 

Intuitively, a hypothesis test designed to detect spatially localized alternatives should ``hone in'' on the area where ``the action is happening'', and then test for the presence or absence of signal in this area. The results of the simulation indicate that the graph TV IPM is doing something along these lines. The theory developed in this paper will show that the graph TV test is essentially optimal for a broad class of problems that include spatially localized alternatives as an important special case.

\subsection{Outline and summary}
\label{subsec:main-results}
Here is an outline of our paper, with a brief description of some our main results. 

Section~\ref{sec:graph-tv-ipm} describes our proposed test statistic and test in more detail, including discussion of the choice of graph $G$, and calibration of the test. 

Section~\ref{sec:representation} is about representation. A series of equivalences establish that the graph TV IPM is \emph{always} witnessed by a binary-valued function. We use this fact to develop a strategy for  computing the test statistic by solving a series of max-flow problems, as well as provide several interpretations of the graph TV IPM in terms of classification and clustering.

Section~\ref{sec:optimality} contains our main theoretical results, characterizing the detection boundary of the hypothesis testing problem where, under the alternative, $P \neq Q$ are separated in the TV IPM. For simplicity, suppose that $d \geq 3$ and that $n_1 = n_2 = \frac{n}{2}$. (The formal theorem statements make no assumption regarding balanced class sizes.) Then these results can be summarized as follows:
\begin{itemize}
	\item Theorem~\ref{thm:graph-tv} shows that when $P,Q$ have densities bounded away from $0$ and $\infty$ and $d_{\BV}(P,Q) \gtrsim (\log n/n)^{1/d}$, then a graph TV test, using an $\varepsilon$-neighborhood graph and calibrated via permutation, has high power.
	\item Theorem~\ref{thm:lower-bound} shows that under the same conditions, no test can have high power if $d_{\BV}(P,Q) \ll (\log n/n)^{1/d}$. The upper and lower bounds match up to constant factors, establishing the rate of convergence for the detection boundary in this problem, and showing that the graph TV test is rate-optimal for detecting differences in $d_{\BV}(P,Q)$.
\end{itemize}
We also examine the implications of this theory for detecting a class of spatially localized alternatives $\mc{P}_{\eta}$, where $\eta$ is the diameter of the support of $P - Q$, and thus determines the degree of spatial localization.
\begin{itemize}
	\item Corollaries~\ref{cor:upper-bound-spatially-localized} and~\ref{cor:lower-bound-spatially-localized} show that the graph TV test is rate-optimal for this class, in the sense that it has non-trivial power so long as $\eta \gtrsim (\log n/n)^{1/d}$, while no test can be have non-trivial power if $\eta \ll (\log n/n)^{1/d}$.
	\item  We also consider a chi-squared test based on binning the domain, and show in Theorem~\ref{thm:lower-bound-chi-squared} that this test is suboptimal for the same problem: no matter how well the number of bins is chosen, the chi-squared test will have trivial power if $\eta \ll n^{-2/3d}$.
\end{itemize}
All of our theory extends to the bivariate setting $d = 2$, but in this case our upper and lower bounds differ by a $\sqrt{\log n}$ factor. We do not consider the univariate setting $d = 1$, since in that case the TV IPM is simply the usual Kolmogorov-Smirnov distance, and the minimax optimal rates for detecting alternatives separated in Kolmogorov-Smirnov distance are well-understood~\citep{ingster2003nonparametric}. 

 Section~\ref{sec:continuum-limit} describes the exact asymptotic behavior of the graph TV IPM in the continuum limit, showing that it converges to a ``density-weighted'' TV IPM rather than the unweighted TV IPM of~\eqref{eqn:tv-ipm}. We discuss situations in which this density-weighting might be useful, and on the other hand, ways of eliminating the density-weighting when it is not desired.

Section~\ref{sec:experiments} contains some numerical experiments with synthetic data and a real data application. Extensions of the graph TV test to goodness-of-fit and specification testing are described in Section~\ref{sec:discussion}.

\subsection{Related work}
\label{subsec:related-work}

As we have already mentioned, this paper proposes a novel multivariate nonparametric distance and hypothesis test based on an IPM. The majority of work on statistical inference in multiple dimensions with IPMs concerns the kernel MMD -- an early reference is \citet{gretton2012kernel} -- but the class of functions of bounded variation is not an RKHS and thus the TV IPM is not a kernel MMD. There has also been some recent interest in using Wasserstein $p$-distances for multivariate statistical inference~\citep{chernkozhukov2017monge,hallin2021distribution,hallin2021multivariate}. Wasserstein $p$-distances for $p > 1$ are not IPMs, but the special case of the Wasserstein $1$-distance between two distributions $P,Q$ with bounded support corresponds to the IPM where $\mc{F}$ contains all continuous functions with Lipschitz constant of at most $1$. 

In recent work two of us (the authors) and collaborators have proposed another class of multivariate IPMs which we called the Radon KS distances~\citep{paik2023maximum}, based on a different notion of multivariate variation called Radon (total) variation. Both TV and (degree-$0$) Radon variation are multivariate generalizations of univariate total variation, and thus both IPMs reduce to the same metric -- the Kolmogorov-Smirnov distance --  when $d = 1$; however, they are not the same when $d \geq 2$. For instance, (a special case of) the Radon KS distance is always witnessed by an indicator of a halfspace, $\1(w^{\top}x \leq b)$, while the TV IPM can be witnessed by indicator functions of a much richer class of sets.  

There are also several multivariate nonparametric two-sample tests involving graphs.~\cite{friedman1979multivariate} use minimum spanning trees to generalize the Wald-Wolfowitz runs test and Kolmogorov-Smirnov test to higher dimensions.~\cite{schilling1986multivariate,henze1988multivariate} propose tests involving $k$-nearest neighbors graphs. Tests based on data-depth~\citep{liu1993quality} can be formulated as graph-based tests~\citep{bhattacharya2019general}. More recent graph-based testing proposals include~\cite{rosenbaum2005exact,chen2017new}. Graph-based tests have been studied theoretically by~\cite{henze1999multivariate,bhattacharya2020asymptotic}, among others. In general, these tests are designed to be distribution-free under the null, and do not involve graph-based IPMs; thus the motivation and formulation of these tests is different than our own.

The graph TV IPM is an example of a graph-based learning method~(see \citet{belkin2003laplacian,zhu2003semi} for some early and fundamental references). In graph-based learning the idea is to use a graph built over observed samples as a tool for organizing and analyzing data. Over time it has been shown~\citep{koltchinskii2000random,belkin2007convergence,vonluxburg2008consistency,garciatrillos2016continuum,garciatrillos2016consistency} that many graph-based functionals approximate a continuum notion of regularity, with more recent attention focusing on rates of convergence~\citep{garciatrillos2020error,garciatrillos2020graph,padilla2020adaptive} and minimax optimality~\citep{green2021minimaxa,green2021minimaxb,hu2022voronoigram}. Part of our work can be viewed as establishing some of the same kinds of guarantees for the graph TV test.

Our main theoretical results concern the minimax optimality of our nonparametric test. The minimax perspective on nonparametric testing was developed by Ingster in a series of pioneering papers~\citep{ingster1987minimax,ingster2003nonparametric}, with more recent work extending these results~\citep{ariascastro2018remember,balakrishnan2019hypothesis,balasubramanian2021optimality,kim2022minimax}. However, the assumptions made in these works are different than our own. Typically, the distributions $P$ and $Q$ are assumed to have smooth densities, for example, densities which belong to H\"{o}lder or Sobolev spaces, whereas we will not assume that densities of $P$ and $Q$ are smooth or even continuous. On the other hand, these papers typically study the detection boundary when distributions are separated in an $L^p$-norm, whereas we assume the distributions are separated in the TV IPM. For these testing problems, $\chi^2$-type tests are typically optimal, whereas we will show that a $\chi^2$-type test is suboptimal for the testing problem considered in this work. Our theory thus complements these works. 


\subsection{Notation}
We will use $B$ to refer to an upper bound on the densities of $P,Q$. (See Section~\ref{subsec:setup}.) We use $C_1,C_2,\ldots$ ($c_1,c_2,\ldots$) to refer to large (small) constants that may depend only on $B$ and $d$, and let $C,c$ and $N$ denote constants that may depend only on $B$ and $d$ and may change from line to line. For sequences $(a_n),(b_n)$ we use the asymptotic notation $a_n \lesssim b_n$ to mean that there exists $C$ such that $a_n \leq C b_n$ for all $n \in \mathbb{N}$, the notation $a_n \ll b_n$ to mean that for every $c > 0$, $a_n < c b_n$ for all sufficiently large $n \in \mathbb{N}$, and $a_n \asymp b_n$ to mean $a_n \lesssim b_n$ and $b_n \lesssim a_n$.

We write $\mathbb{P}_{P,Q}$ and $\mathbb{E}_{P,Q}$ for probability and expectation when $\mc{X}_{n_1} \overset{i.i.d}{\sim} P$ and  $\mc{Y}_{n_2} \overset{i.i.d}{\sim} Q$. The notation $P_{n_1}(f) := \frac{1}{n_1}\sum_{i = 1}^{n_1} f(X_i)$ will be used for the sample mean of a function $f: \Rd \to \R$  and $P_{n_1}(\theta) := \frac{1}{n_1}\sum_{i = 1}^{n_1} \theta_i$ for a vector $\theta \in \Rn$; likewise for $Q_{n_2}(f)$ and $Q_{n_2}(\theta)$.

\section{Graph Total Variation Test}
\label{sec:graph-tv-ipm}

We begin this section by introducing some relevant notation involving graphs, before moving on to formally define the graph TV test.

Throughout, $G = (\ttV,\ttE)$ will be an unweighted, undirected graph with vertices $\ttV = \mc{Z}_n$ at the combined set of samples. Arbitrarily orient and enumerate the edges $e_1,\ldots,e_{m}$ where $m = |\ttE|$. We define the incidence matrix $D_G \in \Reals^{n \times 2m}$ to have rows $(D_G)_{\ell} = (0,\ldots,1,0,\ldots,0,-1,0,\ldots,0)$ for $1 \leq \ell \leq m$ -- where if $e_{\ell} = (i,j)$ then there is a $1$ in position $i$ and a $-1$ in position $j$ -- and $(D_G)_{\ell + m} = -(D_G)_{\ell}$. The graph TV of $\theta \in \Reals^n$ is
$$
\|D_G\theta\|_1 = \sum_{i,j = 1}^{n} |\theta_i - \theta_j| \cdot \1\big( \{Z_i,Z_j\} \in \ttE\big).
$$
Let $Z_i = X_i$ for $i = 1,\ldots,n$ and $Z_{n_1 + j} = Y_{j}$ for $j = 1,\ldots,n_2$, and set $a = (1/n_1,\ldots,1/n_1,-1/n_2,\ldots,-1/n_2)$ to be the assignment vector denoting whether each $Z_i$ belongs to $\mc{X}_{n_1}$ or $\mc{Y}_{n_2}$. Then the graph TV IPM~\eqref{eqn:graph-tv-ipm} can be written in terms of $D_G$ and $a$:
\begin{equation}
	\label{eqn:graph-tv-ipm-matrix}
	d_{\DTV(G)}(\mc{X}_{n_1},\mc{Y}_{n_2}) = \sup_{\theta: \|D_{G}\theta\|_1 \leq 1} \; a^{\top} \theta.
\end{equation}
If $G$ is a connected graph then the graph TV IPM will be finite, since in this case the null space of $D_G$ is spanned by $\onevec = (1,\ldots,1)$ and by construction $a^{\top} \onevec = 0$. 

The optimization problem in~\eqref{eqn:graph-tv-ipm-matrix} can be reformulated as a linear program:
\begin{equation}
	\label{eqn:lp}
	\begin{aligned}
		\max_{\theta \in \R^n,u \in \R^m} \; \theta^{\top} a, \quad \textrm{subject to} \quad u^{\top} \onevec \leq 1, \quad D_{G}\theta - u \leq 0, \quad -D_{G}\theta - u \leq 0.
	\end{aligned}
\end{equation}
This means we can take advantage of highly-optimized LP solvers to compute the graph TV IPM. Later, in Section~\ref{subsec:computation}, we discuss an alternative approach to computing the statistic which takes more advantage of the graph structure.  

\subsection{$\varepsilon$-neighborhood graph}
We have now described a way to compute the graph TV IPM that is valid for any connected graph $G$. However, the behavior of the statistic and test will strongly depend on the choice of graph $G$. While the appropriate choice of graph depends on the particular application, for our theoretical results we will mostly focus on the \emph{$\varepsilon$-neighborhood graph} $G_{n,\varepsilon}$, which puts an edge between $Z_i, Z_j \in \mc{Z}_n$ if $\|Z_i - Z_j\|_2 \leq \varepsilon$. The resulting $\varepsilon$-neighborhood graph TV is defined for a function $f: \Rd \to \Reals$ as 
\begin{equation}
	\label{eqn:dtv}
\DTV_{n,\varepsilon}(f) := \sum_{i,j = 1}^{n} |f(z_i) - f(z_j)| \cdot \1\{\|z_i - z_j\|_2 \leq \varepsilon \}.
\end{equation}
The test statistic we will analyze is the $\varepsilon$-graph TV IPM $d_{\DTV(G_{n,\varepsilon})}(\mc{X}_{n_1},\mc{Y}_{n_2})$. When the graph is clear from context (and it almost always will be) we will abbreviate this to $d_{\DTV}(\mc{X}_{n_1},\mc{Y}_{n_2})$.

Let us give some insight into the connection between $\DTV_{n,\varepsilon}(f)$ and the continuum TV defined in~\eqref{eqn:tv}. Suppose $\mc{Z}_{n}$ is uniformly spread over an open domain $\Omega$ with compact closure, and $f$ is a smooth function compactly supported in $\Omega$. In this case, it is known that in the continuum limit as $n \to \infty, \varepsilon \to 0$, 
\begin{equation*}
	(\sigma n^2 \varepsilon^{d + 1})^{-1} \DTV_{n,\varepsilon}(f) \overset{P}{\to} \TV(f).
\end{equation*}
(Here $\sigma = \int_{B(0,1)} |x_1| \,dx$ is a constant pre-factor; see for instance~\citet{garciatrillos2016continuum}.) One might expect that under similar conditions, the graph TV IPM, suitably rescaled, will converge to the continuum TV IPM~\eqref{eqn:tv-ipm}, and we present a result of this kind in Section~\ref{sec:continuum-limit}. This is one reason why it might make sense to use the graph TV IPM to detect alternatives which differ, at the level of the population, according to the continuum TV IPM.


\begin{remark}
	\label{rmk:ill-posed}
	One reason to define a test statistic using a graph is that it can be computed by solving a finite-dimensional optimization problem, i.e. the linear program in~\eqref{eqn:lp}. As we have already pointed out, another reason to use a graph-based statistic is that the naive approach of plugging in empirical distributions $P_{n_1}, Q_{n_2}$ to $d_{\BV}$ cannot be used since $d_{\BV}(P_{n_1}, Q_{n_2}) = \infty$. It seems likely that there are other ways of fixing this degeneracy. One idea is to smooth the empirical distributions $P_{n_1}$ and $Q_{n_2}$ prior to applying the TV IPM. We suspect this would result in a test with similar theoretical properties to the graph TV IPM, but which would be hard to compute. In contrast, the graph TV IPM is a computationally tractable procedure.
\end{remark}

\subsection{Calibration by permutation}
\label{subsec:graph-tv-test}
Since the graph TV IPM measures distances between two sets of samples, a two-sample test should reject the null hypothesis if the IPM is sufficiently large. We will calibrate such a test by permutation. Let $S_n$ be the set of permutations over $[n] = \{1,\ldots,n\}$, and for each $\pi \in S_n$ define $\mc{X}_{n_1}^{\pi} = (Z_{\pi(1)},\ldots,Z_{\pi(n_1)})$ and $\mc{Y}_{n_2}^{\pi} = (Z_{\pi(n_1 + 1)},\ldots,Z_{\pi(n)})$. For a level $\alpha \in (0,1)$, the permutation critical value is the $(1 - \alpha)$ quantile of the permutation distribution:
$$
t_{\DTV}(\mc{X}_{n_1},\mc{Y}_{n_2}) = \inf\Bigl\{t: \frac{1}{n!}\sum_{\pi \in S_n} \1\bigl(d_{\DTV}(\mc{X}_{n_1}^{\pi},\mc{Y}_{n_2}^{\pi}) \leq t\bigr) \geq 1 - \alpha  \Bigr\}.
$$
We adopt the convention of taking a hypothesis test to be a measurable function $\varphi: \Reals^n \to \{0,1\}$. The graph TV test (calibrated by permutation) is thus
$$
\varphi_{\DTV}(\mc{X}_{n_1},\mc{Y}_{n_2}) =
\begin{dcases*}
	0, \quad & \textrm{$d_{\DTV}(\mc{X}_{n_1},\mc{Y}_{n_2}) \leq t_{\DTV}(\mc{X}_{n_1},\mc{Y}_{n_2})$} \\
	1, \quad & \textrm{$d_{\DTV}(\mc{X}_{n_1},\mc{Y}_{n_2}) > t_{\DTV}(\mc{X}_{n_1},\mc{Y}_{n_2})$}.
\end{dcases*}
$$
This is a level-$\alpha$ test for equality of distributions $P = Q$, i.e. $\mathbb{P}_{P,P}(\varphi_{\DTV}(\mc{X}_{n_1},\mc{Y}_{n_2}) = 1) \leq \alpha$. By randomizing the decision at the critical value $d_{\DTV}(\mc{X}_{n_1},\mc{Y}_{n_1}) = t_{\DTV}(\mc{X}_{n_1},\mc{Y}_{n_1})$, it can be made to be a size-$\alpha$ test~\citep{lehmann2005testing}, i.e $\mathbb{P}_{P,P}(\varphi_{\DTV}(\mc{X}_{n_1},\mc{Y}_{n_2}) = 1)$ would be exactly $\alpha$. For computational reasons, one typically approximates the permutation critical value by Monte Carlo, i.e. by uniformly sampling $\pi_1,\ldots,\pi_B$ from $S_n$; so long as the identity permutation is included, the resulting test is still level-$\alpha$.

\section{Representation of the TV IPM Witness}
\label{sec:representation}

It is often possible to show that the witness of an IPM must belong to a subset of the original optimization domain $\mc{F}$. Such representations, when they exist, are useful both for computational reasons and because they help us understand ``how'' the IPM is measuring differences between distributions. We now derive a representation result for the witness of the graph TV IPM in terms of binary-valued vectors.

As written in~\eqref{eqn:graph-tv-ipm} the graph TV IPM is the solution to a constrained optimization problem, but it is equivalent to the unconstrained ratio optimization problem
\begin{equation}
	\label{eqn:graph-tv-ipm-ratio}
	\max_{\theta \in \Rn} \; R_{G}(\theta), \quad R_{G}(\theta) :=
	\begin{dcases}
		\frac{a^{\top} \theta}{\|D_G\theta\|_1},& \quad \|D_G\theta\|_1 > 0 \\
		- \infty,& \quad \|D_G\theta\|_1 = 0.
	\end{dcases} 
\end{equation}
The equivalence between~\eqref{eqn:graph-tv-ipm} and~\eqref{eqn:graph-tv-ipm-ratio} is immediate: if $\theta^{\ast}$ achieves the maximum in~\eqref{eqn:graph-tv-ipm-ratio}, then $d_{\DTV(G)}(\mc{X}_{n_1},\mc{Y}_{n_2}) = R_G(\theta^{\ast}) \cdot \|D_G\theta^{\ast}\|_1$. A less obvious result -- though one that will be familiar to readers versed in the theory of linear programming -- is that while the domain in~\eqref{eqn:graph-tv-ipm-ratio} is all of $\Reals^n$, in fact the maximum is achieved by a binary vector:
\begin{equation}
	\label{eqn:representation-graph-tv-ipm}
	\max_{\theta \in \Rn} \; R_G(\theta) = \max_{\theta \in \{0,1\}^n} \; R_G(\theta).
\end{equation}

This equivalence follows from known facts about submodular optimization~\citep{hein2011beyond,bach2013learning}, but for completeness we give a proof of~\eqref{eqn:representation-graph-tv-ipm} in Appendix~\ref{sec:pf-representation-graph-tv-ipm}. We now discuss some computational and conceptual implications that follow from this relationship.

\begin{remark}
	A representation analogous to~\eqref{eqn:representation-graph-tv-ipm} also exists for the continuum TV IPM~\eqref{eqn:tv-ipm}, showing that it too has a binary-valued witness function. See Proposition~\ref{prop:representation-population-tv-ipm} in the Appendix. This fact plays an important technical role in the proof of Theorem~\ref{thm:graph-tv}.
\end{remark}

\subsection{Computation via parametric max flow}
\label{subsec:computation}
Equation~\eqref{eqn:representation-graph-tv-ipm} says that the graph TV IPM is an exact convex relaxation of the non-convex problem on the right hand side of~\eqref{eqn:representation-graph-tv-ipm}. Interestingly, this is a case where there are attractive computational approaches to directly solving the non-convex problem. We review one such approach based on \emph{parametric max flow}, a combinatorial approach to solving discrete ratio optimization problems such as the right hand side of~\eqref{eqn:representation-graph-tv-ipm}. Accepted wisdom in fields such as computer vision is that, although the algorithm has poor worst-case complexity, it often works well in practice~\citep{kolmogorov2007applications,wang2016trend}. 

To see how parametric max flow can be used to compute the graph TV IPM, consider
\begin{equation}
	\label{eqn:ratio-optimization-1}
	M(\lambda) := \max_{\theta \in \{0,1\}^n } \; \theta^{\top} a - \lambda\|D_G\theta\|_1, \quad \textrm{for $\lambda \in (0,\infty)$.}
\end{equation}
The function $M(\lambda)$ is non-negative, continuous and piecewise linear, and is strictly positive if and only if $\lambda < \lambda^{\ast}$ where $\lambda^{\ast} = \max_{\theta \in \{0,1\}^n} R_G(\theta)$.  In other words, we have (yet) another representation of the graph TV IPM, as the smallest value of $\lambda$ for which the solution to~\eqref{eqn:ratio-optimization-1} is equal to $0$:
\begin{equation*}
	d_{\DTV(G)}(\mc{X}_{n_1},\mc{Y}_{n_2}) = \inf\Big\{\lambda > 0: \max_{\theta \in \{0,1\}^n } \; \theta^{\top} a - \lambda\|D_G\theta\|_1 = 0\Big\}.
\end{equation*} 
This suggests an iterative strategy for (approximately) computing the graph TV IPM by binary search, suggested (in a broader context) by~\cite{kolmogorov2007applications}:
\begin{enumerate}
	\item Initialize lower and upper bounds $\lambda_{l} = 0, \lambda_{u} = \|D^{\dagger} a\|_{\infty}$, which satisfy $\lambda_{l} \leq \lambda^{\ast} \leq \lambda_u$. 
	\item Compute $M(\lambda)$ at $\lambda = (\lambda_{u} + \lambda_{l})/2$.  
	\item If $M(\lambda) = 0$, set $\lambda_{u} = \lambda$. Otherwise, set $\lambda_{l} = \lambda$. 
	\item Repeat Steps 2 and 3 until convergence. Output $\lambda$.
\end{enumerate}
This is appealing because in Step 2 $M(\lambda)$ can be computed by solving a (standard) max-flow min-cut problem, for which there exist fast algorithms both in theory and practice~\citep{boykov2004experimental}. 



\subsection{Interpretations}
\label{subsec:interpretations}
We now discuss two ways to interpret the graph TV IPM, leveraging the equivalences between~\eqref{eqn:graph-tv-ipm},~\eqref{eqn:graph-tv-ipm-ratio} and~\eqref{eqn:representation-graph-tv-ipm}. One interpretation has to do with binary classification, while the other is more graph-theoretic in nature.

\paragraph{Classification.} We first show that the criterion of the graph TV IPM can be interpreted as measuring the accuracy of a binary classifier divided by a measure of the complexity of its decision boundary. Let $\mc{A}:\Rd \to \{0,1\}$ be a binary classifier used to distinguish samples from $\mc{X}_{n_1}$ and samples from $\mc{Y}_{n_2}$. The classifier outputs 1 if it thinks a given sample belongs to $\mc{X}_{n_1}$ and 0 if it thinks the sample belongs to $\mc{Y}_{n_2}$. Then the classification accuracy (above baseline) of $\mc{A}$ is
$$
\mathrm{acc}(\mc{A}) := \frac{1}{2}\Bigl(\frac{1}{n_1} \sum_{i = 1}^{n_1} \1(\mc{A}(X_i) = 1) + \frac{1}{n_2} \sum_{j = 1}^{n_2} \1(\mc{A}(Y_j) = 0)\Bigr) -\frac{1}{2}.
$$
Subtraction by $1/2$ means that expected classification accuracy of a randomized base classifier $\mc{A}_{\mathrm{base}}$ -- which assigns a $1$ to each point with probability $n_1/n$, and a $0$ otherwise -- is $0$. On the other hand, one way to measure the ``complexity'' of $\mc{A}$ is through some data-dependent measure of its decision boundary. For example, one could count the number of pairs $(Z_i,Z_j)$ within $\varepsilon$ of one another for which $\mc{A}(Z_i) \neq \mc{A}(Z_j)$:
$$
\mathrm{plex}(\mc{A}) := \frac{1}{2}\sum_{i,j = 1}^{n} \1\big\{\mc{A}(Z_i) \neq \mc{A}(Z_j),\|Z_i - Z_j\|_2 \leq \varepsilon\big\}
$$
With classification accuracy and complexity thus defined, we have $R_{G_{n,\varepsilon}}(\1_{\mc{A}}) = \mathrm{acc}(\mc{A})/\mathrm{plex}(\mc{A})$, where $\1_{\mc{A}} = (\mc{A}(Z_1),\ldots,\mc{A}(Z_n))$; in other words the ratio is simply the classification accuracy of $\mc{A}$, normalized by its complexity. Therefore,\footnote{In \eqref{eqn:classification-interpretation} the maximum is over all $\mc{A}$ with $\mathrm{plex}(\mc{A}) > 0$.}
\begin{equation}
	\label{eqn:classification-interpretation}
	d_{\DTV}(\mc{X}_{n_1},\mc{Y}_{n_2}) = \max_{\theta \in \{0,1\}^n} R_{G_{n,\varepsilon}}(\theta) = \max_{\mc{A}} \frac{\mathrm{acc}(\mc{A})}{\mathrm{plex}(\mc{A})}.
\end{equation}
Various authors (e.g. \citet{friedman2003multivariate,lopez-paz2017revisiting,kim2021classification} and others) suggest using classification accuracy of specific classifiers as a two-sample test statistic. The equivalences in~\eqref{eqn:classification-interpretation} show that the graph TV test is different: it (implicitly) considers \emph{all} classifiers $\mc{A}$, rejecting the null when there is an $\mc{A}$ that achieves sufficiently high classification accuracy, relative to the complexity of its decision boundary. In this sense, it is similar to the classical learning-theoretic idea of structural risk minimization. We note that there exists a somewhat analogous representation of kernel MMDs~\citep{fukumizu2009kernel}.

\paragraph{Graph clustering.} One can equivalently write the representation in~\eqref{eqn:representation-graph-tv-ipm} in terms of finding a particular kind of graph cluster. For a subset $S \subseteq \mc{Z}_n$, define the graph \emph{cut} and \emph{balance} functionals
\begin{equation}
	\label{eqn:cut-balance}
	\cut_G(S) := \sum_{i,j = 1}^{n} \1\big(Z_i \in S,Z_j \in S^c, \{Z_i,Z_j\} \in \ttE\big), \quad\textrm{and}\quad \mathrm{bal}_G(S) := \frac{1}{2}\Big|\sum_{i = 1}^{n} a_i \1(Z_i \in S)\Big|,
\end{equation}
The graph TV IPM can be viewed as solving a particular kind of balanced cut problem:
\begin{equation}
	\label{eqn:representation-graph-tv-ipm-cut}
	d_{\DTV(G)}(\mc{X}_{n_1},\mc{Y}_{n_1}) = \max_{\theta \in \{0,1\}^n} R_G(\theta) = \max_{S \subset \mc{Z}_n} \frac{\min\{\mathrm{bal}_G(S), \mathrm{bal}_G(S^c)\}}{\cut_G(S)}.
\end{equation}
This resembles (one over) the \emph{Cheeger cut} of $S$~\citep{cheeger1970lower}, where we recall that the Cheeger cut is the solution to
\begin{equation}
	\label{eqn:cheeger}
	 \min_{S} \Phi(S), \quad \Phi(S) := \frac{\cut_{G}(S)}{\min\{|S|,|S^c|\}}.
\end{equation}
The crucial difference between the balanced cut problems~\eqref{eqn:representation-graph-tv-ipm-cut} and~\eqref{eqn:cheeger} is that in the former, the balance term depends on the assignment vector $a$, and encourages picking a set $S$ that mostly belongs to $\mc{X}_{n_1}$ or $\mc{Y}_{n_2}$.  Minimizing the Cheeger cut -- or equivalently, maximizing one over the Cheeger cut -- is a popular technique for graph clustering~\citep{kannan2004clusterings,garciatrillos2016consistency}. At a high level, then, we can view the optimization underlying the graph TV IPM as searching for a cluster of points that primarily belong either to $\mc{X}_{n_1}$ or $\mc{Y}_{n_2}$.

\section{Minimax TV IPM Testing}
\label{sec:optimality}

Like any nonparametric test the graph TV test can only have non-trivial power against alternatives in a few ``directions''~\citep{janssen2000global}, so to get meaningful theoretical results we must place some conditions on the alternative hypotheses. In Section~\ref{subsec:setup} we state these conditions and formalize the hypothesis testing problem under consideration. Our upper and lower bounds on power are given in Sections~\ref{subsec:upper-bounds} and~\ref{subsec:lower-bounds}, and implications for spatially localized alternatives are explored in Sections~\ref{subsec:spatially-localized-alternatives-graphtvipm} and~\ref{subsec:spatially-localized-alternatives-chisquared}.

\subsection{Problem setup and background}
\label{subsec:setup}

\paragraph{Regularity conditions.}
For all of our theoretical results, we will assume that $P$ and $Q$ satisfy the following regularity conditions.
\begin{enumerate}[label=(A\arabic*)]
	\item 
	\label{asmp:domain}
	Both $P$ and $Q$ are supported on $\Omega = (0,1)^d$.
	\item
	\label{asmp:density}
	Both $P$ and $Q$ are absolutely continuous with respect to Lebesgue measure, with densities $p$ and $q$ respectively.
	\item
	\label{asmp:bounded-density}
	The difference in densities $p - q$ is bounded from above, and the mixture of densities $\mu := \frac{n_1}{n}p + \frac{n_2}{n}q$ is bounded from below: there exists $B \in [2,\infty)$ such that
	\begin{equation}
		\label{eqn:bounded-density}
		\big|p(x) - q(x)\big| \leq B, \quad \textrm{and} \quad \frac{1}{B} \leq \mu(x) \leq B, \quad \textrm{for all $x \in \Omega$.} 
	\end{equation} 
\end{enumerate}

Hereafter we let $\mc{P}^{\infty}(d,B)$ be the collection of all $(P,Q)$ that satisfy~\ref{asmp:domain}-\ref{asmp:bounded-density}. We will treat $B$ as a potentially large but fixed (in $n_1,n_2$) constant, and allow constants $c, c_1,c_2,\ldots,$ and $C,C_1,C_2,\ldots$ to depend on $B$. We abbreviate $\mc{P}^{\infty}(d) = \mc{P}^{\infty}(d,B)$ for convenience.

\paragraph{Detection boundary and minimax optimality.}
Formally speaking, the hypothesis testing problem we consider is to distinguish
\begin{equation}
	\label{eqn:dual-tv-testing}
	H_0: P = Q, \quad \textrm{from} \quad H_1: (P,Q) \in \mc{P}^{\infty}(d), ~~ d_{\BV}(P,Q) \geq \rho.
\end{equation}
We are interested in the {detection boundary} of this testing problem, meaning the minimum value of $\rho$ required for some test to have power of at least (say) $1/2$, uniformly over all alternatives. Mathematically, letting 
$$
\mathrm{Risk}_{n_1,n_2}(\varphi,\rho) = \sup\Bigl\{\E_{P,Q}[1 - \varphi]: (P,Q) \in \mc{P}^{\infty}(d),  d_{\BV}(P,Q) \geq \rho\Bigr\}
$$
be the maximum probability of type II error over the alternatives in~\eqref{eqn:dual-tv-testing}, the \emph{detection boundary} is 
$$
\rho_{n_1,n_2}^{\ast} := \inf\Bigl\{\rho: \inf_{\varphi} \mathrm{Risk}_{n_1,n_2}(\varphi,\rho) \leq \frac{1}{2}\Bigr\},
$$
where the infimum is over all tests which have size at most $1/4$ for any $P = Q$. 

Although all of our results are non-asymptotic, our focus will be on understanding the minimax rate, meaning the rate at which $\rho_{n_1,n_2}^{\ast}$ converges to $0$ as $n_1,n_2 \to \infty$. We will refer to any test that has risk less than $1/2$ for some sequence $\rho_{n_1,n_2} \lesssim \rho_{n_1,n_2}^{\ast}$ as a rate-optimal test. (We note that the constants $1/2$ and $1/4$ in the definition of $\rho_{n_1,n_2}^{\ast}$ are arbitrary, and could be replaced by any fixed $\alpha,\beta \in (0,1)$ for which $\beta < 1 - \alpha$ without changing the rate of convergence of the detection boundary.) 

\begin{remark}
	Some assumptions on $P$ and $Q$ are necessary in order for~\eqref{eqn:dual-tv-testing} to be a well-posed problem. For instance if $P$ or $Q$ has an atom then the TV IPM between them will be infinite unless they place the same weight on the atom.\footnote{For a constructive argument, take $P$ to have an atom at some $x_0 \in \Rd$, and suppose $Q(\{x_0\}) < P(\{x_0\})$. Consider $f_r(x) = \frac{\1\{x \in B(x_0,r)\}}{r^{d - 1}}$. The total variation of $f_r$ is fixed in $r$, but taking $r \to 0$ will blow the criterion of the TV IPM up to $\infty$.} 
	The specific condition $(P,Q) \in \mc{P}^{\infty}(d)$ is sufficient to ensure that $d_{\BV}(P,Q)$ is finite and that the detection boundary converges to $0$ as $n_1,n_2 \to \infty$, but it may not be necessary. For example, note that the TV IPM is simply the dual norm of total variation and so is sensibly defined whenever the signed measure $P - Q$ belong to the space that is dual to $\BV(\Rd)$: this is a weaker condition than $(P,Q) \in \mc{P}^{\infty}(d)$~\citep{meyers1977integral,phuc2015characterizations}. Extending our theory to hold under the minimum possible assumptions on $(P,Q)$ would be an interesting direction for follow up work.
\end{remark}

\subsection{Risk of graph TV test}
\label{subsec:upper-bounds}

We now state our first major result: an upper bound on the risk of the $\varepsilon$-graph TV test when computed with graph radius $
\varepsilon_n := C_1(\frac{\log n}{n})^{1/d}$, where $C_1 := (24B)^{1/d}2\sqrt{d}$.

\begin{theorem}
	\label{thm:graph-tv}
	Consider the hypothesis testing problem in~\eqref{eqn:dual-tv-testing}, under assumptions~\ref{asmp:domain}-\ref{asmp:bounded-density}. For any $\alpha \in (0,1)$, the $\varepsilon_n$-graph TV test is size-$\alpha$: $\mathbb{E}_{P,P}[\varphi_{\DTV}] \leq \alpha.$ If furthermore $\alpha,\beta \geq 1/n$, then there is a constant $C$ such that if
	\begin{equation}
		\label{eqn:detection-boundary-ub}
		d_{\BV}(P,Q) \geq \frac{C}{\beta}\biggl(\frac{\log \min(n_1,n_2)}{\min(n_1,n_2)}\biggr)^{1/d} \cdot
		\begin{dcases}
			\sqrt{\frac{\log \min(n_1,n_2)}{\beta}}, & \quad \textrm{when $d = 2$} \\
			1, & \quad \textrm{when $d \geq 3$,}
		\end{dcases}
	\end{equation}
	then the $\varepsilon_n$-graph TV test has power of at least $1 - \beta$: $\mathbb{E}_{P,Q}[\varphi_{\DTV}] \geq 1 - \beta$.
\end{theorem}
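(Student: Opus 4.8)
The plan is to prove this result in two parts: (i) the size guarantee, which follows immediately from the exchangeability of the permutation distribution under the null, and (ii) the power guarantee, which is the substantive part and requires relating the graph TV IPM $d_{\DTV}(\mc{X}_{n_1}, \mc{Y}_{n_2})$ to the continuum TV IPM $d_{\BV}(P,Q)$, controlling its fluctuations, and bounding the permutation critical value. For the size claim I would simply note that under $H_0: P=Q$, the combined sample $\mc{Z}_n$ is exchangeable, so $d_{\DTV}(\mc{X}_{n_1}^\pi, \mc{Y}_{n_2}^\pi)$ for uniformly random $\pi$ has the same distribution as the observed statistic; a standard argument (e.g. \citet{lehmann2005testing}) then gives $\mathbb{E}_{P,P}[\varphi_{\DTV}] \le \alpha$.

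For the power bound, the core is a deterministic/high-probability \emph{lower bound} on the statistic under the alternative and an \emph{upper bound} on the permutation critical value. For the lower bound, I would take $f^\ast$ to be (a suitable truncation/mollification of) the binary-valued continuum witness of $d_{\BV}(P,Q)$ — here invoking Proposition~\ref{prop:representation-population-tv-ipm} that the continuum witness is an indicator $\1_A$ of a set with finite perimeter — and plug the vector $\theta^\ast = (f^\ast(Z_1), \ldots, f^\ast(Z_n))$ into the optimization~\eqref{eqn:graph-tv-ipm-matrix}. The numerator $a^\top \theta^\ast = P_{n_1}(f^\ast) - Q_{n_2}(f^\ast)$ concentrates around $\mathbb{E}_P[f^\ast] - \mathbb{E}_Q[f^\ast] = d_{\BV}(P,Q)$ by a Bernoulli/bounded-differences argument, losing a term of order $\sqrt{\log(1/\beta)/\min(n_1,n_2)}$. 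The denominator $\|D_{G_{n,\varepsilon_n}} \theta^\ast\|_1 = \DTV_{n,\varepsilon_n}(f^\ast)$ must be shown to be at most $\asymp \sigma n^2 \varepsilon_n^{d+1} \cdot \TV(f^\ast)$ with high probability — this is where the continuum-limit scaling $\DTV_{n,\varepsilon}(f) \approx \sigma n^2\varepsilon^{d+1}\TV(f)$ enters, and because the witness is an indicator of a set with good perimeter this reduces to counting edges of $G_{n,\varepsilon_n}$ that straddle $\partial A$, which is a sum of indicators controllable by a concentration inequality together with a bound on the Lebesgue measure of an $\varepsilon_n$-tube around $\partial A$. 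Combining, $d_{\DTV}(\mc{X}_{n_1},\mc{Y}_{n_2}) \gtrsim \frac{a^\top\theta^\ast}{\sigma n^2 \varepsilon_n^{d+1}}$ on a high-probability event.

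For the critical value, I would show that $t_{\DTV}(\mc{X}_{n_1},\mc{Y}_{n_2}) \lesssim \frac{1}{\sigma n^2 \varepsilon_n^{d+1}}\cdot(\text{something small})$. The key observation is that for \emph{any} labeling — in particular a permuted one — the numerator $a_\pi^\top \theta$ for the optimal $\theta$ can be bounded using the representation~\eqref{eqn:representation-graph-tv-ipm-cut} as a balanced-cut ratio: for any candidate set $S$, $\mathrm{bal}_{G}(S)$ under a random permutation concentrates (it is a sum of $\pm$ signs reweighted, so of order $\sqrt{|S|}/n$-ish after normalization), while $\cut_G(S) \gtrsim$ the graph isoperimetry of $G_{n,\varepsilon_n}$ forces $S$ to be large if its cut is small. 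Quantitatively I expect a uniform bound of the form $\max_S \frac{\min(\mathrm{bal}_G(S),\mathrm{bal}_G(S^c))}{\cut_G(S)} \lesssim \frac{1}{n^2\varepsilon_n^{d+1}}\cdot\frac{\sqrt{\log n}}{\sqrt{n\varepsilon_n^d}}$ or similar, holding with high probability over both the permutation and the data, via a union bound over the (exponentially many) subsets $S$ controlled by a Bernstein-type tail and the Cheeger/isoperimetric inequality for $\varepsilon$-graphs on a uniformly-lower-bounded density (e.g. \citet{garciatrillos2016consistency}). The separation rate then emerges by comparing: power holds once $d_{\BV}(P,Q) - (\text{fluctuation}) \gtrsim t_{\DTV}\cdot \sigma n^2 \varepsilon_n^{d+1}$, and plugging in $\varepsilon_n \asymp (\log n/n)^{1/d}$ makes the right side $\asymp (\log n / n)^{1/d}$ for $d \ge 3$, with the extra $\sqrt{\log n}$ in $d=2$ coming from the fact that the graph connectivity threshold $n\varepsilon_n^d \asymp \log n$ is only barely enough to control cut fluctuations when $d=2$.

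The main obstacle I anticipate is the upper bound on the permutation critical value: controlling $\sup_S$ of the balanced-cut ratio uniformly over all subsets $S$ and over permutations, simultaneously, while getting the $\log$-factors sharp. This requires a careful marriage of (a) graph-isoperimetric inequalities for $\varepsilon$-neighborhood graphs (which force low-cut sets to have volume bounded away from the extremes or to be "tube-like" near the boundary of $\Omega$), and (b) concentration of the balance functional under random relabeling. A secondary technical point is handling the mollification of the continuum witness so that $\TV(f^\ast)$ is not inflated while $f^\ast$ remains close to an indicator on the sample points — here I would use that sets of finite perimeter can be approximated by smooth sets in the sense of perimeter (De Giorgi), and that the approximation error only affects the sample means negligibly given the density upper bound $B$.
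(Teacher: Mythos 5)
Your size argument and your lower bound on $d_{\DTV}(\mc{X}_{n_1},\mc{Y}_{n_2})$ under the alternative are essentially the paper's: the paper also plugs the binary continuum witness $f^{\ast}=\1_{A^{\ast}}/\per(A^{\ast})$ from Proposition~\ref{prop:representation-population-tv-ipm} into the graph IPM, concentrates the numerator $P_{n_1}(f^{\ast})-Q_{n_2}(f^{\ast})$ (it uses Chebyshev rather than bounded differences, which is fine), and bounds $\DTV_{\varepsilon_n}(\1_{A^{\ast}}) \lesssim n^2\varepsilon_n^{d+1}\per(A^{\ast})$ via an edge-counting concentration result (Lemma~S.6 of \citet{hu2022voronoigram}), combined with the isoperimetric bound $\nu(A^{\ast})^{(d-1)/d}\lesssim\per(A^{\ast})$ and $d_{\BV}(P,Q)\lesssim\nu(A^{\ast})^{1/d}$ to kill the error term. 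One small note: the paper uses the indicator directly, with no mollification; your De Giorgi approximation step is unnecessary baggage.

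Where your proposal diverges — and has a genuine gap — is the upper bound on the permutation critical value. You propose to bound $\max_{S}\min\{\mathrm{bal}_G(S),\mathrm{bal}_G(S^c)\}/\cut_G(S)$ by a union bound over all subsets $S$, combined with a Cheeger-type lower bound on $\cut_G(S)$. This will not give the sharp rate. The concentration of the random balance $\mathrm{bal}_G(S)$ under a uniform permutation is of order $\sqrt{|S|}/n$ with tails $\exp(-ct^2 n/|S|)$, but a union bound over all subsets of a given size $k$ must absorb a factor $\binom{n}{k}\approx e^{k\log(n/k)}$; matching these forces a deviation of order $\sqrt{k\log(n/k)}/n$ per set. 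Since the cut lower bound from graph isoperimetry is at best of order $n\varepsilon_n^{d+1}\min(|S|,|S^c|)^{(d-1)/d}$, the resulting bound on the ratio does not reduce to the needed threshold $t_{n_1,n_2}\asymp(n\log n)^{-1}$; you end up with an extra polynomial-in-$n$ loss, precisely because there are exponentially many candidate sets. You flag this as the main obstacle but do not resolve it, and I don't see how the crude union bound can be salvaged.

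The paper's route sidesteps the exponential union bound entirely. It does not work at the level of subsets. Instead: (i) it reduces to a deterministic grid over $\asymp n/\log n$ cubes $\Xi$ of side $\varepsilon_0=\varepsilon_n/(2\sqrt d)$, showing that for any $\theta$ the piecewise-constant average $P_{\Xi}\theta$ loses only a factor $n_{\min}(\Xi)^{-1}$ in $\ell^1$ approximation error and $n_{\min}(\Xi)^{-2}$ in grid TV relative to $\DTV_{\varepsilon_n}(\theta)$; (ii) it bounds the \emph{permutation complexity} of the unit grid-TV ball by dualizing, $\langle\gamma,W\rangle = \langle D_{\Xi}\gamma,(D_{\Xi}^{\top})^{\dagger}W\rangle \le \|(D_{\Xi}^{\top})^{\dagger}W\|_{\infty}$, which replaces a supremum over the continuous TV ball with a $\max$ over only $|E_{\Xi}|\asymp n/\log n$ coordinates, and controls each coordinate via eigenvalue/eigenvector estimates for the grid Laplacian (from \citet{hutter2016optimal}) plus the Bernstein inequality for permuted sums of \citet{albert2019concentration}; (iii) it shows $n_{\min}(\Xi),n_{\max}(\Xi)\asymp\log n$ with high probability. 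That duality step is the crux: it converts the "exponentially many sets'' problem into a polynomially many coordinates problem, which is what makes the sharp $(\log n/n)^{1/d}$ rate attainable. Your balanced-cut/Cheeger strategy is a reasonable heuristic for intuition, but without some analogue of this reduction it cannot close the argument.
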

At this point a few remarks are in order.

\begin{remark}
	Although they are not strictly necessary, the conditions that $\alpha, \beta \geq 1/n$ significantly ease both the presentation and derivation of Theorem~\ref{thm:graph-tv}. These conditions permit $\alpha \leq 1/4$ when $n \geq 4$ and $1 - \beta \geq 1/2$ when $n \geq 2$. Therefore, Theorem~\ref{thm:graph-tv} implies an upper bound on the detection boundary $\rho_{n_1,n_2}^{\ast}$.
\end{remark}

\begin{remark}
	It is not hard to show that $d_{\BV}(P,Q)$ is a metric over $\mc{P}^{\infty}(d)$, meaning $d_{\BV}(P,Q) = 0 \Longleftrightarrow P = Q$.  Thus, for any fixed pair of distributions $(P,Q) \in \mc{P}^{\infty}(d)$, Theorem~\ref{thm:graph-tv} implies that the graph TV test has asymptotic power tending to $1$ as $n_1,n_2 \to \infty$. 
\end{remark} 

\begin{remark}
	The choice of neighborhood graph radius $\varepsilon = \varepsilon_n \asymp (\log n/n)^{1/d}$ is (up to constants) the smallest choice of $\varepsilon$ that ensures that with high probability the graph $G_{n,\varepsilon}$ is connected. Our theory can handle larger $\varepsilon \gg \varepsilon_n$, but the resulting test will only be powerful when $d_{\BV}(P,Q)$ is substantially larger than in~\eqref{eqn:detection-boundary-ub}. From the computational perspective, we would like to choose $\varepsilon$ to be as small as possible, since computing the graph TV IPM takes longer with denser graphs.
\end{remark}

\begin{remark}
	The result of Theorem~\ref{thm:graph-tv} applies to a test that is calibrated by permutation, as in~\citet{albert2015tests,kim2022minimax}. This is in contrast to the more common approach to minimax analysis of nonparametric two-sample testing, which is to consider a hypothesis test with rejection region determined by a concentration inequality. In practice, the latter kinds of tests are typically quite conservative, whereas calibration by permutation leads to (nearly) exact type I error control.
\end{remark}

Let us very briefly describe the way we prove Theorem~\ref{thm:graph-tv}. As the $\varepsilon_n$-graph TV test is calibrated by permutation the upper bound on its size is a standard fact~\citep{lehmann2005testing}. To lower bound the power, we show that there exists a threshold $t_{n_1,n_2}$ that does not depend on the data, such that under the conditions of the theorem both
\begin{equation}
	\label{pf:graph-tv-0}
	\mathbb{P}_{P,Q}\Bigl(t_{\DTV}(\mc{X}_{n_1},\mc{Y}_{n_2}) \geq t_{n_1,n_2}\Bigr) \leq \frac{\beta}{2}, \quad \textrm{and} \quad \mathbb{P}_{P,Q}\Bigl(d_{\DTV}(\mc{X}_{n_1},\mc{Y}_{n_2}) < t_{n_1,n_2}\Bigr) \leq \frac{\beta}{2}.
\end{equation}
Taken together, the inequalities in~\eqref{pf:graph-tv-0} imply the stated upper bound on the risk of the graph TV test. The proof of~\eqref{pf:graph-tv-0}, and thus Theorem~\ref{thm:graph-tv}, can be found in the appendix, as for all the rest of the results of this paper.

\subsection{Lower bound on detection boundary}
\label{subsec:lower-bounds}

Theorem~\ref{thm:graph-tv} gives an upper bound on the detection boundary $\rho_{n_1,n_2}^{\ast}$. The following result is a lower bound.
\begin{theorem}
	\label{thm:lower-bound}
	Consider the hypothesis testing problem in~\eqref{eqn:dual-tv-testing}, under assumptions~\ref{asmp:domain}-\ref{asmp:bounded-density}. For any $0 < \beta < 1 - \alpha$, there are constants $N = N(\alpha,\beta)$ and $c$ such that the following holds: if $\min(n_1,n_2) \geq N$, then for any level-$\alpha$ test $\varphi$ there exist distributions $P,Q$ satisfying
	\begin{equation*}
		d_{\BV}(P,Q) \geq c \bigg(\frac{\log \min(n_1,n_2)}{\min(n_1,n_2)}\bigg)^{1/d},
	\end{equation*}
	but for which $\varphi$ has power at most $1 - \beta$: $\mathbb{E}_{P,Q}[\varphi] \leq 1 - \beta$. 
\end{theorem}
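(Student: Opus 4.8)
The plan is a standard Le Cam mixture (Ingster $\chi^2$) argument, tuned so that the $\log$ factor in the rate emerges from a union over many candidate locations for a spatially localized perturbation. Since the null $H_0: P=Q$ is composite it suffices to work against one convenient null: take $P_0 = Q_0 = \Unif((0,1)^d)$, whose common density is identically $1 \in [1/B,B]$, so $(P_0,P_0) \in \mc{P}^\infty(d)$ and hence $\E_{P_0,P_0}[\varphi] \le \alpha$ for any level-$\alpha$ test $\varphi$. Against this null I will build a family of alternatives indexed by the position of a small ``bump''.

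Concretely, set $m := \min(n_1,n_2)$ and $\eta := (c'\log m/m)^{1/d}$ for a small constant $c'$ to be chosen; tile a fixed compact subcube of $(0,1)^d$ by $M \asymp \eta^{-d}$ disjoint cubes $C_1,\dots,C_M$ of side $\eta$. For each $j$ let $h_j$ be supported on $C_j$, equal to a constant $a$ on the concentric sub-cube $C_j'$ of side $\eta/2$ and to $-b$ on $C_j \setminus C_j'$, where $a,b$ are absolute constants chosen so that $\int h_j = 0$, $0 < a \le 1/2$, and $b \le a$. Assuming WLOG $n_1 = m \le n_2$, let $P_j$ have density $1 + h_j$ and keep $Q := P_0 = \Unif((0,1)^d)$ (if instead $n_2 < n_1$, perturb $Q$ rather than $P$). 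Each $(P_j,Q) \in \mc{P}^\infty(d)$: $1+h_j \ge 1/2 > 0$ and integrates to $1$; $|p_j - q| = |h_j| \le 1/2 \le B$; and $\mu_j = 1 + \tfrac{n_1}{n} h_j \in [1/2,3/2] \subseteq [1/B,B]$.

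It remains to check two things. First, the separation: plugging the feasible witness $f_0 := \1(C_j')/\TV(\1(C_j'))$ into the definition~\eqref{eqn:tv-ipm} of $d_{\BV}$ and using $\TV(\1(C_j')) = \mathrm{Per}(C_j') = 2d(\eta/2)^{d-1}$ gives
\[
d_{\BV}(P_j,Q) \;\ge\; \int f_0\,(p_j - q) \;=\; \frac{a\,(\eta/2)^d}{\mathrm{Per}(C_j')} \;=\; \frac{a\eta}{4d} \;\asymp\; \Bigl(\tfrac{\log\min(n_1,n_2)}{\min(n_1,n_2)}\Bigr)^{1/d},
\]
which is the asserted bound (the hidden constant depends on $a,d,c'$, all fixed). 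Second, indistinguishability: let $\mathbb{P}_0 := \mathbb{P}_{P_0,P_0}$ and $\bar{\mathbb{P}} := \tfrac1M\sum_{j=1}^M \mathbb{P}_{P_j,Q}$. Crucially, because $Q = P_0$ does not depend on $j$, the likelihood ratio $L_j := d\mathbb{P}_{P_j,Q}/d\mathbb{P}_0 = \prod_{i=1}^{n_1}(1 + h_j(X_i))$ involves only the $n_1 = m$ observations drawn from $P_j$ --- this is exactly why $m$, rather than $n_1+n_2$, governs the rate. Since the $h_j$ have disjoint supports and $\int h_j = 0$, one computes $\E_{\mathbb{P}_0}[L_j L_{j'}] = 1$ for $j \ne j'$ and $\E_{\mathbb{P}_0}[L_j^2] = (1+\gamma)^m$ with $\gamma := \|h_j\|_2^2 \le a^2\eta^d$, and therefore
\[
\chi^2(\bar{\mathbb{P}} \,\|\, \mathbb{P}_0) \;=\; \frac1{M^2}\sum_{j,j'} \E_{\mathbb{P}_0}[L_j L_{j'}] \;-\; 1 \;=\; \frac{(1+\gamma)^m - 1}{M}.
\]
As $(1+\gamma)^m \le e^{m\gamma} \le m^{a^2 c'}$ while $M \gtrsim m/\log m$, choosing $c'$ small enough that $a^2 c' \le \tfrac12$ forces $\chi^2(\bar{\mathbb{P}}\,\|\,\mathbb{P}_0) \lesssim m^{-1/2}\log m \to 0$. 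Combining with the Cauchy--Schwarz bound $\|\bar{\mathbb{P}}-\mathbb{P}_0\|_{\TV} \le \tfrac12\sqrt{\chi^2(\bar{\mathbb{P}}\,\|\,\mathbb{P}_0)}$, we get $\tfrac1M\sum_j \E_{P_j,Q}[\varphi] = \E_{\bar{\mathbb{P}}}[\varphi] \le \E_{\mathbb{P}_0}[\varphi] + \|\bar{\mathbb{P}} - \mathbb{P}_0\|_{\TV} \le \alpha + \tfrac12\sqrt{\chi^2(\bar{\mathbb{P}}\,\|\,\mathbb{P}_0)}$, which is at most $1-\beta$ once $m \ge N(\alpha,\beta)$ (using $\beta < 1-\alpha$). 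Hence some $j$ has $\E_{P_j,Q}[\varphi] \le 1-\beta$, and $(P,Q) := (P_j,Q)$ is the desired pair.

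The main obstacle --- and really the only non-routine point --- is the tension in the last step: the separation requirement pins $\gamma \asymp \eta^d \asymp c'\log m/m$, so the ``complexity'' term $m\gamma \asymp c'\log m$ is of the same order as $\log M \asymp \log m$, and the argument succeeds only because $M$ is polynomially large in $m$. One must carry the constant $c'$ through the $\chi^2$ bound and take it small enough to open a genuine polynomial gap $(1+\gamma)^m \ll M$; it cannot be removed without degrading the rate. Everything else --- verifying \ref{asmp:domain}--\ref{asmp:bounded-density} for the $(P_j,Q)$, the perimeter computation, and the bookkeeping for $\E_{\mathbb{P}_0}[L_j L_{j'}]$ --- is elementary.
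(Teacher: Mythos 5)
Your proposal is correct and is essentially the same proof as the paper's: an Ingster--Le Cam $\chi^2$ argument against the uniform null, using a mixture of spatially disjoint bump alternatives that perturb only the distribution with $\min(n_1,n_2)$ samples, with the rate and the $\log$ factor emerging from the trade-off between the number $M \asymp m/\log m$ of bump locations and the per-bump $\chi^2$ term $(1+\|h_j\|_2^2)^m$. The paper phrases the final step via Lemma~\ref{lem:likelihood-ratio} rather than the explicit $\chi^2$-to-TV inequality, and uses a fixed-amplitude $\pm\tfrac12$ perturbation on two opposite subcubes rather than your tunable $a/b$ concentric construction, but these are cosmetic; in fact with $a\le\tfrac12$ your constraint $a^2c'\le\tfrac12$ is already met at $c'=1$, so the extra tuning lever on $c'$ is harmless but unnecessary.
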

Combined, Theorems~\ref{thm:graph-tv} and~\ref{thm:lower-bound} show that the detection boundary converges at rate
\begin{equation}
	\label{eqn:detection-boundary}
	\rho_{n_1,n_2}^{\ast} \asymp \bigg(\frac{\log \min(n_1,n_2)}{\min(n_1,n_2)}\bigg)^{1/d},
\end{equation}
and further imply that the graph TV test is rate-optimal for detecting differences in $d_{\BV}$. (Except when $d = 2$, where the upper and lower bounds differ by a $\sqrt{\log \min(n_1,n_2)}$ factor.)

\begin{remark}
	Typical work on nonparametric testing -- see e.g. ~\citet{ingster1987minimax,lepski1999minimax,ariascastro2018remember} -- focuses on characterizing the detection boundary of hypothesis testing problems where the metric between distributions is an $L^p$ distance between densities, and assumes regularity conditions such as the densities $p,q$ being H\"{o}lder smooth or belong to a certain Sobolev or Besov space. The hypothesis testing problem in~\eqref{eqn:dual-tv-testing} is different: the assumption $(P,Q) \in \mc{P}^{\infty}(d)$ does not restrict the densities $p,q$ to be differentiable or even continuous, and the distance is measured using the TV IPM which is a strictly weaker metric than the $L^{\infty}$ distance between densities. Likewise, the minimax rate of convergence we obtain is different than the ``standard'' rate of convergence in nonparametric testing problems. For instance, assuming balanced sample sizes $n_1 = n_2 = n/2$ for convenience, if $P,Q$ have densities $p,q$ that are Lipschitz continuous, and the distance between $P,Q$ is measured by the $L^2$ norm of $p - q$, then the detection boundary converges at rate $(1/n)^{2/(4 + d)}$~\citep{ariascastro2018remember}. This is a slower rate of convergence than $(\log n/n)^{1/d}$ when $d < 4$, and a faster rate of convergence when $d \geq 4$.

\end{remark}

\subsection{Spatially localized alternatives and the graph TV test}
\label{subsec:spatially-localized-alternatives-graphtvipm}
Let us examine the implications of our upper and lower bounds for the type of problem motivated by our illustrative example: detecting spatially localized departures from the null. 

We begin by constructing a collection of parametric families $\mc{P}_{\eta}$ of varying degrees of spatial localization. Let $\Xi_{\eta}$ be a partition of $\Omega = (0,1)^d$ into cubes $\Delta \in \Xi_{\eta}$ of side-length $\eta$: that is, for $N = \frac{1}{\eta}$ and each $j \in [N]^d$, let $\Delta_j = \frac{j}{N} + (-\eta,0]^d.$ Further subdivide each cube $\Delta_j$ into bottom-left and upper-right halves $\Delta_j^L$ and $\Delta_j^R$, where $ \Delta_j^L = \frac{j}{N} + (-\eta,\frac{\eta}{2}]^d$ and $\Delta_j^R = \frac{j}{N} + (-\frac{\eta}{2},0]^d$. Then put $\phi_{\Delta_j}(x) = \1(x \in \Delta_j^L) - \1(x \in \Delta_j^R)$ and $p_{\Delta_j}(x) = 1 + \frac{1}{2}\phi_{\Delta_j}(x)$, let $P_{\Delta_j}$ be the distribution with density $p_{\Delta_j}$, and set $\mc{P}_{\eta} = \{(P_0,P_{\Delta_j}): \Delta_j \in \Xi_{\eta}\}$, where $P_0 = \Unif(\Omega)$. Though this construction is a bit technical, there are two important things to keep in mind. First, for all $\eta \in (0,1)$, $\mc{P}_{\eta} \subset \mc{P}^{\infty}(d)$. Second, each family $\mc{P}_{\eta}$ represents a collection of alternatives that differ in a region with area on the order of $\eta^d$. Thus, the smaller $\eta$, the more spatially localized the departures from the null in $\mc{P}_{\eta}$.

Consider the two-sample testing problem of distinguishing
\begin{equation}
	\label{eqn:spatially-localized-testing}
	H_0: P = Q, \quad \textrm{versus} \quad H_1: (P,Q) \in \mc{P}_{\eta}.
\end{equation}
As $\eta \to 0$, this hypothesis testing problem becomes more challenging, and we are interested in the smallest value of $\eta$ for which some test has risk of at most $\frac{1}{2}$. It turns out that this can be determined (up to constant factors) from the theory we have already developed.
\begin{corollary}
	\label{cor:upper-bound-spatially-localized}
	Consider the hypothesis testing problem in~\eqref{eqn:spatially-localized-testing}. For any $\alpha \in (0,1)$, the $\varepsilon_n$-graph TV test is size-$\alpha$: $\mathbb{E}_{P,P}[\varphi_{\DTV}] \leq \alpha$. For any $\alpha \in (0,1)$, the $\varepsilon_n$-graph TV test is size-$\alpha$: $\mathbb{E}_{P,P}[\varphi_{\DTV}] \leq \alpha.$ If furthermore $\alpha,\beta \geq 1/n$, then there is a constant $C$ such that if
	\begin{equation}
		\label{eqn:spatial-localization-ub}
		\eta \geq C \Big(\frac{\log \min(n_1,n_2)}{\min(n_1,n_2)}\Big)^{1/d} \cdot
		\begin{dcases}
			\sqrt{\log \min(n_1,n_2)}, & \quad \textrm{when $d = 2$} \\
			1, & \quad \textrm{when $d \geq 3$,}
		\end{dcases}
	\end{equation}
	then the $\varepsilon_n$-graph TV test has power of at least $1/2$: $\mathbb{E}_{P,Q}[\varphi_{\DTV}] \geq \frac{1}{2}$.
\end{corollary}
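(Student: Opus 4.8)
The plan is to derive the corollary directly from Theorem~\ref{thm:graph-tv} by establishing two facts: first, that every pair $(P,Q) = (P_0, P_{\Delta_j}) \in \mc{P}_\eta$ lies in $\mc{P}^\infty(d)$, so Theorem~\ref{thm:graph-tv} applies; and second, that the TV IPM $d_{\BV}(P_0, P_{\Delta_j})$ is bounded below by a constant multiple of $\eta$. Granting these, if $\eta$ satisfies~\eqref{eqn:spatial-localization-ub} with a large enough constant $C$, then $d_{\BV}(P_0,P_{\Delta_j})$ exceeds the right-hand side of~\eqref{eqn:detection-boundary-ub} with $\beta = 1/2$, and Theorem~\ref{thm:graph-tv} immediately gives power at least $1/2$; the size statement is also inherited verbatim from Theorem~\ref{thm:graph-tv}, since it holds for any $(P,P)$.

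For the membership claim: assumption~\ref{asmp:domain} holds because $p_{\Delta_j}$ and the uniform density on $(0,1)^d$ are supported on $\Omega$; \ref{asmp:density} is immediate; and for~\ref{asmp:bounded-density} we have $|p_0(x) - p_{\Delta_j}(x)| = \frac{1}{2}|\phi_{\Delta_j}(x)| \leq \frac{1}{2} \leq B$, and the mixture density $\mu = \frac{n_1}{n} + \frac{n_2}{n} p_{\Delta_j}$ lies in $[\tfrac{1}{2}, \tfrac{3}{2}] \subseteq [1/B, B]$ since $B \geq 2$. So $\mc{P}_\eta \subset \mc{P}^\infty(d)$, as already asserted in the text.

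The main work is the lower bound on $d_{\BV}(P_0, P_{\Delta_j})$. Here I would exhibit an explicit feasible test function. The natural choice is $f = c_\eta \cdot \1(x \in \Delta_j^L)$ for an appropriate scale $c_\eta$: then $\E_{P_0}[f] - \E_{P_{\Delta_j}}[f] = c_\eta \big( P_0(\Delta_j^L) - P_{\Delta_j}(\Delta_j^L)\big) = c_\eta \big( (\eta/2)^d - \int_{\Delta_j^L}(1 + \tfrac12)\,dx\big)$ — wait, more carefully, $P_{\Delta_j}(\Delta_j^L) = \int_{\Delta_j^L}(1 + \tfrac12)\,dx = \tfrac32 (\eta/2)^d$, so the mean difference is $-\tfrac12 c_\eta (\eta/2)^d$; taking instead $f = -c_\eta \1(x\in\Delta_j^L)$ (or equivalently $f = c_\eta \1(x \in \Delta_j^R)$) makes it positive and equal to $\tfrac12 c_\eta (\eta/2)^d$. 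It remains to compute $\TV(f)$ for the indicator of a cube of side $\eta/2$: by the coarea/perimeter formula, $\TV(\1_A) = \per(A)$, the perimeter, which for a cube of side $\eta/2$ in $\Rd$ is $2d(\eta/2)^{d-1}$. So the feasible normalization is $c_\eta = \big(2d(\eta/2)^{d-1}\big)^{-1}$, and
\[
d_{\BV}(P_0, P_{\Delta_j}) \;\geq\; \frac{\tfrac12 (\eta/2)^d}{2d(\eta/2)^{d-1}} \;=\; \frac{\eta}{8d} \;=:\; c' \eta.
\]
The hard part — to the extent there is one — is getting the perimeter computation and the TV-of-an-indicator identity stated cleanly (this is standard BV theory: $\TV(\1_A)$ equals the perimeter of $A$ in the sense of sets of finite perimeter, and for a box it is the surface area), and then tracking the constant so that the constant $C$ in~\eqref{eqn:spatial-localization-ub} can be taken to be $C/c'$ times the constant appearing in~\eqref{eqn:detection-boundary-ub}. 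Everything else is bookkeeping: substitute $\beta = 1/2$ into~\eqref{eqn:detection-boundary-ub}, note the $1/\beta$ and $\sqrt{1/\beta}$ factors become absolute constants absorbable into $C$, and conclude.
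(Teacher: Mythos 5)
Your proposal is correct and matches the paper's route: the paper also derives Corollary~\ref{cor:upper-bound-spatially-localized} by (i) noting $\mc{P}_\eta \subset \mc{P}^\infty(d)$, (ii) lower-bounding $d_{\BV}(P_0,P_\Delta) \geq \eta/(8d)$ via an explicit feasible piecewise-constant test function (the paper uses $\phi_\Delta/\TV(\phi_\Delta)$; you use the normalized indicator $\1_{\Delta_j^R}/\per(\Delta_j^R)$, which yields the identical constant), and (iii) plugging into Theorem~\ref{thm:graph-tv} with $\beta = 1/2$.
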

\begin{corollary}
	\label{cor:lower-bound-spatially-localized}
	Consider the hypothesis testing problem in~\eqref{eqn:spatially-localized-testing}, and let $\varphi$ be any level-$\alpha$ test. For any $0 < \beta < 1 - \alpha$, there are constants $N = N(\alpha,\beta)$ and $c$ such that the following holds: if $\min(n_1,n_2) \geq N$, then for any level-$\alpha$ test $\varphi$ there exist distributions $(P,Q) \in \mc{P}_{\eta}$ for some 
	\begin{equation}
	\label{eqn:spatial-localization-lb}
		\eta \geq c \bigg(\frac{\log \min(n_1,n_2)}{\min(n_1,n_2)}\bigg)^{1/d},
	\end{equation}
	for which $\varphi$ has power at most $1 - \beta$: $\mathbb{E}_{P,Q}[\varphi] \leq 1 - \beta$. 
\end{corollary}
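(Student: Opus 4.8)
The plan is to deduce Corollary~\ref{cor:lower-bound-spatially-localized} from Theorem~\ref{thm:lower-bound}, using the fact that the hard instances underlying that lower bound can be taken to be spatially localized --- indeed, to lie in the family $\mc{P}_\eta$ at the scale $\eta \asymp (\log\min(n_1,n_2)/\min(n_1,n_2))^{1/d}$. There are only two things to verify. First, $\mc{P}_\eta \subset \mc{P}^\infty(d)$ for every $\eta \in (0,1)$: this was already noted after the definition of $\mc{P}_\eta$, and is immediate since $p_0 \equiv 1$ and $p_{\Delta_j} \in [1/2,3/2]$, so $|p_0 - p_{\Delta_j}| \leq \tfrac12 \leq B$ and $\mu \in [1/2,3/2] \subset [1/B,B]$. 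Second --- the only genuinely new estimate --- a separation bound $d_{\BV}(P_0,P_{\Delta_j}) \geq c\,\eta$, valid for every $\Delta_j \in \Xi_\eta$ with $c = c(d)$. Given this, membership in $\mc{P}_\eta$ at scale $\eta$ forces TV-IPM separation $\gtrsim \eta$, so the construction behind Theorem~\ref{thm:lower-bound} produces an alternative $(P_0,P_{\Delta_j}) \in \mc{P}_\eta$ with $\eta \gtrsim (\log\min(n_1,n_2)/\min(n_1,n_2))^{1/d}$ on which $\varphi$ has power at most $1 - \beta$, which is exactly the claim.

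For the separation bound I would exhibit an explicit near-optimal witness. Fix $\Delta_j$ with halves $\Delta_j^L, \Delta_j^R$, each of volume $\asymp \eta^d$ and sharing an interface of $(d-1)$-dimensional measure $\asymp \eta^{d-1}$; the signed measure $P_0 - P_{\Delta_j}$ has density $-\tfrac12\phi_{\Delta_j} = \tfrac12(\1_{\Delta_j^R} - \1_{\Delta_j^L})$. Take $f = \eta^{-(d-1)}(\1_{\Delta_j^R} - \1_{\Delta_j^L})$. A perimeter (coarea) computation gives $\TV(f) \leq c_0$ for a constant $c_0 = c_0(d)$, so $f/c_0$ is feasible for \eqref{eqn:tv-ipm}, while $\int f \, d(P_0 - P_{\Delta_j}) = \tfrac12\eta^{-(d-1)}(|\Delta_j^R| + |\Delta_j^L|) \asymp \eta^{-(d-1)}\cdot\eta^d = \eta$. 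Hence $d_{\BV}(P_0,P_{\Delta_j}) \geq \eta/c_0 =: c\,\eta$. (If one prefers continuous test functions, mollifying $f$ on a boundary layer of width $\asymp \eta$ yields a $C_c^\infty$ witness with the same two estimates up to constants.)

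For the indistinguishability half --- should one wish to present the argument self-contained rather than quoting the proof of Theorem~\ref{thm:lower-bound} --- the route is the standard Le Cam / Ingster reduction. Place the uniform prior over the $\eta^{-d}$ cubes, so that the mixture alternative perturbs the density on the signal sample by an amplitude-$\tfrac12$ bump supported on a region of volume $\asymp \eta^d$. Since distinct cubes have disjoint supports, the cross terms in the second-moment expansion collapse to a single sum over cubes, giving $\chi^2 \lesssim \eta^d\bigl((1 + c\eta^d)^{\min(n_1,n_2)} - 1\bigr) \leq \eta^d\bigl(e^{c\min(n_1,n_2)\eta^d} - 1\bigr)$; choosing $\eta^d \asymp \log\min(n_1,n_2)/\min(n_1,n_2)$ with a small enough constant makes the exponent at most $c'\log\min(n_1,n_2)$, so the bound is $\lesssim \eta^d \cdot \min(n_1,n_2)^{c'} \asymp \min(n_1,n_2)^{c'-1}\log\min(n_1,n_2)$, which drops below any prescribed constant once $\min(n_1,n_2) \geq N(\alpha,\beta)$. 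Le Cam's two-point inequality then supplies a member of $\mc{P}_\eta$ on which $\varphi$ has power at most $1 - \beta$.

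The main obstacle is the constant-and-scale bookkeeping in this last step: the $\log\min(n_1,n_2)$ budget in the rate is precisely what pays for the union over the $\eta^{-d}$ ``needles'', so the $\chi^2$ expansion must be carried out carefully enough to place the amplitude-$\tfrac12$, volume-$\eta^d$ perturbations defining $\mc{P}_\eta$ on the favorable side of the threshold, and to confirm that the relevant sample size entering the threshold is $\min(n_1,n_2)$ rather than $n$. Everything else is either already recorded in the text ($\mc{P}_\eta \subset \mc{P}^\infty(d)$), a routine bounded-variation computation (the separation bound), or a verbatim appeal to the proof of Theorem~\ref{thm:lower-bound}.
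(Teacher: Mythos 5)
Your proposal is correct and takes the same route as the paper: the corollary is read off from the proof of Theorem~\ref{thm:lower-bound} by noting that its hard instances $\mc{P}_1 = \{(P_0,Q_\Delta):\Delta\in\Xi'\}$ are precisely the family $\mc{P}_\eta$ at scale $\eta=\varepsilon'\asymp(\log\min(n_1,n_2)/\min(n_1,n_2))^{1/d}$, that $\mc{P}_\eta\subset\mc{P}^\infty(d)$, and that $d_{\BV}(P_0,P_\Delta)\geq c\eta$ via an explicit near-witness (the paper uses $\phi_\Delta/\TV(\phi_\Delta)$, which is your $f$ up to sign and scaling). One small geometric slip, inconsequential for the estimate: $\Delta_j^L$ and $\Delta_j^R$ are opposite corner sub-cubes of $\Delta_j$ that meet only at a single vertex, so they do not share a $(d-1)$-dimensional interface of measure $\asymp\eta^{d-1}$ -- but the total variation of your witness is still $\per(\Delta_j^L)+\per(\Delta_j^R)\asymp\eta^{d-1}$, so $\TV(f)\asymp 1$ and the separation bound goes through unchanged.
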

Corollaries~\ref{cor:upper-bound-spatially-localized} and~\ref{cor:lower-bound-spatially-localized} imply that the graph TV test is rate-optimal for the hypothesis testing problem in~\eqref{eqn:spatially-localized-testing}, without needing knowledge of $\eta$. (Again, up to a $\sqrt{\log \min(n_1,n_2)}$ factor when $d = 2$.) The corollaries follow directly from Theorems~\ref{thm:graph-tv} and~\ref{thm:lower-bound}, and the upper and lower bounds
\begin{equation}
	\label{eqn:spatially-localized-tvipm}
	c \eta \leq d_{\BV}(P_{\Delta},P_0) \leq C \eta, \quad \textrm{for each $(P_0,P_{\Delta}) \in \mc{P}_{\eta}$}.
\end{equation}
Specifically, keeping in mind that $\mc{P}_{\eta} \subset \mc{P}^{\infty}(d)$, the upper bound on $\Risk_{n_1,n_2}(\varphi_{\DTV},\mc{P}_{\eta})$ in Corollary~\ref{cor:lower-bound-spatially-localized} follows from Theorem~\ref{thm:graph-tv} and the lower bound in~\eqref{eqn:spatially-localized-tvipm}. On the other hand, the families $\mc{P}_{\eta}$ are exactly those used to construct the ``hard'' distributions in the proof of Theorem~\ref{thm:lower-bound}, meaning the conclusions of Theorem~\ref{thm:lower-bound} are unchanged if we only consider $(P,Q) \in \mc{P}_{\eta}$. Using this observation along with the upper bound in~\eqref{eqn:spatially-localized-tvipm} leads to Corollary~\ref{cor:lower-bound-spatially-localized}. Finally, the lower bound in~\eqref{eqn:spatially-localized-tvipm} is a calculation given in the proof of Theorem~\ref{thm:lower-bound}, whereas the upper bound follows from~\eqref{eqn:tv-ipm-finite}.

\subsection{Spatially localized alternatives and a $\chi^2$-type test}
\label{subsec:spatially-localized-alternatives-chisquared}

Now we consider the ability of a $\chi^2$-type test to detect alternatives in $\mc{P}_{\eta}$. Specifically we consider a test based on the following $\chi^2$-type statistic, 
\begin{equation}
\label{eqn:chi-squared}
\mc{K}_{\varepsilon}(\mc{X}_{n_1},\mc{X}_{n_2}) := \sum_{\Delta \in \Xi_\varepsilon}\Big(n_1P_{n_1}(\Delta) - n_2Q_{n_2}(\Delta)\Big)^2.
\end{equation}
Here $\varepsilon > 0$ is a tuning parameter that determines the width of the cubes $\Delta \in \Xi_\varepsilon$. In words, the statistic in~\eqref{eqn:chi-squared} is computed by partitioning the domain into bins, taking the squared difference between the number of points $X_1,\ldots,X_{n_1}$ and $Y_{1},\ldots,Y_{n_2}$ in each bin, and adding across bins. This is not exactly the same as Pearson's $\chi^2$ test as it does not normalize the counts in each bin. Nevertheless, when the number of samples $n_1 = n_2$ is equal and the distributions $P,Q$ have densities bounded away from $0$ and $\infty$, the operating characteristics of a test based on~\eqref{eqn:chi-squared} should be similar to the $\chi^2$ test, and in many cases the test will be quite powerful. For instance,~\citet{ariascastro2018remember} analyze a test based on~\eqref{eqn:chi-squared} and show that it is optimal for detecting H\"{o}lder smooth alternatives.

A test $\varphi_{\csq}$ based on~\eqref{eqn:chi-squared} will reject the null hypothesis if $\mc{K}_{\varepsilon}(\mc{X}_{n_1},\mc{Y}_{n_2})$ is greater than some threshold $t_{\alpha}$ chosen to control type I error. We analyze the risk of $\varphi_{\csq}$ in a variant of our two-sample framework in which the sample sizes are independent and identically distributed Poisson random variables: that is,
\begin{equation}
\label{eqn:poissonized}
\begin{aligned}
n_1,n_2 & \overset{ind}{\sim} \mathrm{Pois}(n), \quad X_1,\ldots,X_{n_1} \overset{\mathrm{i.i.d}}{\sim} P, \quad Y_1,\ldots,Y_{n_2} \overset{\mathrm{i.i.d}}{\sim} Q.
\end{aligned}
\end{equation}
We assume independent Poisson sample sizes because it makes the analysis much easier. In the following theorem, $z^{\alpha}$ denotes the $(1 - \alpha)$ quantile of the standard Normal distribution.
\begin{theorem}
	\label{thm:lower-bound-chi-squared}
	Consider the Poisson observation model~\eqref{eqn:poissonized}. Suppose $t_{\alpha}$ is chosen such that $\mathbb{E}_{P_0,P_0}[\varphi_{\csq}]  \leq \alpha$ for some $\alpha \in (0,1)$. There exist constants $c,N$ such that if $n \geq N$ and 
	\begin{equation}
	\label{eqn:lower-bound-chi-squared}
	\eta \leq c n^{-2/3d},
	\end{equation}
	then there exist alternatives $(P,Q) \in \mc{P}_{\eta}$ against which the $\varepsilon$-chi-squared test has power of at most $2/(z^{\alpha})^2$: $\mathbb{E}_{P,Q}[\varphi] \leq 2/(z^{\alpha})^2$. 
\end{theorem}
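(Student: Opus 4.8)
The plan is to work throughout in the Poissonized model \eqref{eqn:poissonized}, where the counts $N_\Delta := n_1 P_{n_1}(\Delta)\sim\mathrm{Pois}(nP(\Delta))$ and $M_\Delta := n_2 Q_{n_2}(\Delta)\sim\mathrm{Pois}(nQ(\Delta))$ are mutually independent across bins $\Delta\in\Xi_\varepsilon$, so that $\mc{K}_\varepsilon=\sum_\Delta (N_\Delta-M_\Delta)^2$ is a sum of independent terms. Since every alternative in $\mc{P}_\eta$ is of the form $(P_0,P_{\Delta_{j_0}})$ for a cube $\Delta_{j_0}\in\Xi_\eta$, it suffices to exhibit, for each $\varepsilon$, one choice of $\Delta_{j_0}$ against which $\varphi_{\csq}$ is weak. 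First I would record the first two moments: writing $\delta_\Delta:=n(P(\Delta)-Q(\Delta))$ and $v_\Delta:=n(P(\Delta)+Q(\Delta))$, the Skellam cumulants give $\E[(N_\Delta-M_\Delta)^2]=v_\Delta+\delta_\Delta^2$ and $\Var((N_\Delta-M_\Delta)^2)=v_\Delta+2v_\Delta^2+4\delta_\Delta^2(v_\Delta+1)$. Summing and using $\sum_\Delta P(\Delta)=\sum_\Delta Q(\Delta)=1$ yields $\E_{P,Q}[\mc{K}_\varepsilon]=2n+\|\delta\|_2^2$, where $\|\delta\|_2^2:=\sum_\Delta\delta_\Delta^2$, together with $\Var_{P_0,P_0}[\mc{K}_\varepsilon]=2n+8n^2\sum_\Delta\mathrm{Leb}(\Delta)^2\le 2n+8n^2\varepsilon^d=:\sigma_n^2$; and because $P_{\Delta_{j_0}}$ agrees with $P_0$ off the $O((\eta/\varepsilon)^d\vee 1)$ bins meeting $\Delta_{j_0}$, one checks that $\Var_{P_0,P_{\Delta_{j_0}}}[\mc{K}_\varepsilon]=(1+o(1))\sigma_n^2$ when $\eta\le cn^{-2/3d}$.

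Next I would bound the ``signal'' $\|\delta\|_2^2$. The density difference $q-p_0=\tfrac12\phi_{\Delta_{j_0}}$ is supported in $\Delta_{j_0}$, is at most $\tfrac12$ in absolute value, and has $L^1$ norm of order $\eta^d$, so a Hölder-type estimate gives
\[
\|\delta\|_2^2=n^2\sum_\Delta\Big(\int_\Delta (q-p_0)\Big)^2\le n^2\Big(\max_\Delta\int_\Delta|q-p_0|\Big)\Big(\int|q-p_0|\Big)\le Cn^2\eta^d\min(\varepsilon,\eta)^d.
\]
Combining this with $\sigma_n^2\ge 8n^2\varepsilon^d$ and the hypothesis $\eta\le cn^{-2/3d}$, a short calculation (splitting on whether $\varepsilon\ge\eta$ and on whether $n\varepsilon^d\gtrsim 1$) gives $\|\delta\|_2^2\le (Cc^{3d/2}+o(1))\,\sigma_n$: regardless of how $\varepsilon$ is tuned, the shift between the null and alternative means of $\mc{K}_\varepsilon$ is at most a small multiple of the null standard deviation, and one sees that the worst case is $\varepsilon\asymp\eta$, which is exactly where the exponent $2/3d$ comes from.

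The argument then splits into two regimes. If $\varepsilon\ge\eta$, a pigeonhole count shows that some cube $\Delta_{j_0}\in\Xi_\eta$ lies inside a single bin of $\Xi_\varepsilon$; since $\int_{\Delta_{j_0}}p_{\Delta_{j_0}}=\mathrm{Leb}(\Delta_{j_0})$, this forces $P_{\Delta_{j_0}}(\Delta)=P_0(\Delta)$ for \emph{every} $\Delta\in\Xi_\varepsilon$, so the law of $\mc{K}_\varepsilon$ under $(P_0,P_{\Delta_{j_0}})$ is identical to its law under $(P_0,P_0)$, and the power equals the type I error, which is $\le\alpha\le 1/(2(z^\alpha)^2)\le 2/(z^\alpha)^2$ (the middle inequality being Chebyshev for $N(0,1)$). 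If $\varepsilon<\eta$, then $\mc{K}_\varepsilon$ is a sum of $\varepsilon^{-d}\to\infty$ essentially i.i.d.\ terms whose Lyapunov ratio is $o(1)$ uniformly over admissible $\varepsilon$; Berry--Esseen then shows $(\mc{K}_\varepsilon-\E_{P_0,P_0}\mc{K}_\varepsilon)/\sigma_n$ is approximately $N(0,1)$ under the null, so any threshold with type I error $\le\alpha$ must satisfy $t_\alpha\ge\E_{P_0,P_0}[\mc{K}_\varepsilon]+(z^\alpha-o(1))\sigma_n$. Finally, Cantelli's inequality bounds the power by
\[
\frac{\Var_{P_0,P_{\Delta_{j_0}}}(\mc{K}_\varepsilon)}{\big(t_\alpha-\E_{P_0,P_{\Delta_{j_0}}}[\mc{K}_\varepsilon]\big)^2}\le\frac{(1+o(1))\sigma_n^2}{\big((z^\alpha-o(1))\sigma_n-\|\delta\|_2^2\big)^2},
\]
which by the signal estimate of the previous paragraph is at most $2/(z^\alpha)^2$ once $c$ is small and $n\ge N$, both depending only on $\alpha$ and $d$.

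The step I expect to be the main obstacle is the uniform-in-$\varepsilon$ control of the null distribution of $\mc{K}_\varepsilon$ needed to lower bound $t_\alpha$: the per-bin means $n\varepsilon^d$ range from $o(1)$ (most bins empty) up to polynomially large, so one has to verify the Lyapunov/Berry--Esseen condition in a form that does not degrade as $\varepsilon$ varies over all admissible values, and then make sure every $o(1)$ and every absorbed constant above can simultaneously be driven below the slack permitted by the target constant $2/(z^\alpha)^2$ with a single choice of $c$ (as a function of $\alpha$) and threshold $N$. The bookkeeping for $\Var_{P_0,P_{\Delta_{j_0}}}[\mc{K}_\varepsilon]=(1+o(1))\sigma_n^2$ in the regime $\varepsilon<\eta$ is the other place where care is required, since the $O((\eta/\varepsilon)^d)$ perturbed bins contribute terms of order $n^3\varepsilon^{3d}$ per bin that must be shown to be lower order.
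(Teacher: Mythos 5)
Your proposal is correct and follows essentially the same route as the paper: Poissonize so the bin counts are independent, compute the mean and variance of $\mc{K}_\varepsilon$ from Skellam moments, lower bound the null threshold $t_\alpha$ via Berry--Esseen (with the third-moment error controlled uniformly in $\varepsilon$ because $|\Xi_\varepsilon|\geq \eta^{-d}\gtrsim n^{2/3}$), upper bound the power under a chosen $(P_0,P_{\Delta_{j_0}})$ via a Chebyshev-type inequality, and show the mean shift $\|\delta\|_2^2\lesssim n^2\eta^d\min(\varepsilon,\eta)^d$ is a small multiple of the null standard deviation whenever $\eta\lesssim n^{-2/3d}$. The small differences --- your pigeonhole handling of $\varepsilon\geq\eta$ rather than the paper's dyadic restriction $\varepsilon=\eta/2^k$, and Cantelli in place of Chebyshev --- are cosmetic, and the concerns you flag (uniform Lyapunov ratio, the $(1+o(1))\sigma_n^2$ variance under the alternative) are exactly the points the paper also has to verify and does verify with the same estimates.
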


Theorem~\ref{thm:lower-bound-chi-squared} shows that for alternatives in $\mc{P}_{\eta}$ the chi-squared test fails to have power unless the parameter $\eta$ controlling spatial localization is much larger than $(\log n/n)^{1/d}$. This also means that the $\chi^2$-type test is suboptimal for detecting distributions separated in the TV IPM. Specifically, using the fact that $\mc{P}_{\eta} \subset \mc{P}^{\infty}(d)$ along with the lower bound in~\eqref{eqn:spatially-localized-tvipm}, we conclude that if $\alpha < 0.01$ (say) then $\varphi_{\csq}$ will not have power of at least $1/2$ over all alternatives in~\eqref{eqn:dual-tv-testing} unless $\rho_{n_1,n_2} \gtrsim n^{-2/3d}$. The bottom line is that the graph TV test has better worst-case power than the $\chi^2$-type test for detecting spatially localized alternatives specifically, and distributions separated in the TV IPM more generally.

Furthermore, we note that the $\chi^2$-type statistic in~\eqref{eqn:chi-squared} is (up to a normalizing constant) actually an empirical kernel MMD, with kernel 
\begin{equation*}
	k_{\varepsilon}(x,y) = \sum_{\Delta \in \Xi_\varepsilon} \1(x \in \Delta, y \in \Delta).
\end{equation*}
Indeed, we believe the conclusion of Theorem~\ref{thm:lower-bound-chi-squared} should hold for kernel MMDs using other ``nice'' kernels such as the Gaussian kernel, though we do not pursue the details further. 

At this point a few remarks are in order.
\begin{remark}
	Our results should not be interpreted as saying that the graph TV test is generally superior to a kernel test. It is a fact of life in nonparametric testing that no test can be universally optimal; indeed, designing a test with \emph{complementary} operating characteristics to the kernel MMD is one of our basic motivations. In particular, we think that the kernel test is likely to be superior to the graph TV test when alternatives are more globally supported; although empirically the story appears more nuanced, see Section~\ref{sec:experiments}.
\end{remark}
\begin{remark}
	 There have been several proposals for improving the power of kernel tests by carefully tuning the bandwidth of the kernel in a data-dependent manner. However, Theorem~\ref{thm:lower-bound-chi-squared} implies that the $\chi^2$-type test $\varphi_{\csq}$ is suboptimal for detecting spatially localized alternatives \emph{no matter how} the binwidth $\varepsilon$ is chosen. On the other hand, it is possible that other alterations to kernel tests may be more effective. For example, the bandwidth of the kernel could be chosen in a locally adaptive manner as proposed in~\citet{lepski1999minimax} (albeit for an entirely different testing problem). Their modification seems more appropriate for detecting spatially localized alternatives.
\end{remark}

Finally, the limitations of $\chi^2$ statistics for detecting sparse alternatives are well-understood in Normal means~\citep{donoho2004higher} and regression testing~\citep{ariascastro2011global} problems. Relatedly, there are two-sample tests besides the graph TV IPM, not based on kernels or IPMs, which we would expect to successfully detect spatially localized alternatives. For instance, we believe the \emph{max test} -- which replaces the sum of squared differences in~\eqref{eqn:chi-squared} by the maximum --  should be optimal for the testing problem~\eqref{eqn:spatially-localized-testing}, at least for an appropriate choice of $\varepsilon$. However we see several advantages of the graph TV test over the max test: the graph TV test is also optimal for the more general testing problem~\eqref{eqn:dual-tv-testing}, and it can be applied more generally to problems where binning the domain is either computationally expensive, or impossible (because the domain is unknown).


\section{Continuum limit of Graph TV IPM}
\label{sec:continuum-limit}

In this section we examine the large sample limit of the graph TV IPM. In general, the statistic will not converge to $d_{\BV}(P,Q)$, but rather to a weighted TV IPM, where the weight function depends on how the graph is formed. 
Let $\omega$ be a positive weight function supported on an open set $\Omega \subset \Rd$. Then \emph{$\omega$-weighted TV} is defined as 
\begin{equation}
	\label{eqn:weighted-tv}
	\mathrm{TV}(f; \omega) := \sup\Big\{ \int_{\Omega} f \cdot \mathrm{div}(\phi): \phi \in C_c^1(\Omega;\Rd), \|\phi(x)\|_{2} \leq \omega(x)~\textrm{for all $x \in \Omega$} \Big\}. 
\end{equation}
The corresponding $\omega$-weighted TV IPM is
\begin{equation*}
	d_{\BV}(P,Q; \omega) := \sup_{\TV(f;\omega) \leq 1} \mathbb{E}_{P}[f(X)] - \mathbb{E}_{Q}[f(Y)].
\end{equation*}
Theorem~\ref{thm:graph-ks-consistency}, below, shows that the large sample limit of the $\varepsilon$-neighborhood graph TV IPM -- as $n_1,n_2 \to \infty$ and $\frac{n_1}{n} \to \lambda \in (0,1)$ -- is a particular weighted TV IPM involving the square of the limiting mixture density, which is
$$
\omega_{P,Q}(x) := \lambda p(x) + (1 - \lambda) q(x).
$$ 
This will be true under appropriate regularity conditions, and for graph radius $\varepsilon \to 0$ satisfying
\begin{equation}
	\label{eqn:graph-radius-continuum-limit}
	\varepsilon \geq C_2\Big(\frac{(\log n)^{q_d}}{n}\Big)^{1/d},
\end{equation}
where $q_d = 3/2$ for $d = 2$ and $q_d = 1$ for $d \geq 3$. In what follows, recall that $\sigma = \int_{B(0,1)} |x_1| \,dx$. 
\begin{theorem}
	\label{thm:graph-ks-consistency}
	Suppose $(P,Q) \in \mc{P}^{\infty}(d)$ have continuous densities $p,q$. As $n_1,n_2 \to \infty$, $n_1/n \to \lambda \in (0,1)$, if  $\varepsilon$-neighborhood graph TV IPM is computed with graph radius $\varepsilon \to 0$ additionally satisfying~\eqref{eqn:graph-radius-continuum-limit}, then with probability one,
	\begin{equation}
		\label{eqn:graph-ks-consistency}
		\sigma n^2 \varepsilon^{d + 1} d_{\DTV}(\mc{X}_{n_1},\mc{Y}_{n_2}) \to d_{\BV}(P,Q; \omega_{P,Q}^2).
	\end{equation}
\end{theorem}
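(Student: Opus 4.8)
The plan is to read \eqref{eqn:graph-ks-consistency} as a statement about the convergence of optimal values of a Rayleigh-type ratio, and to prove it by combining three ingredients: (i) a quantitative coupling between the combined empirical measure $\mu_n := \frac1n\sum_{i=1}^n\delta_{Z_i}$ and the limiting mixture measure $\nu$ with density $\omega_{P,Q}$; (ii) variational ($\Gamma$-)convergence of the rescaled graph TV functional $\theta\mapsto(\sigma n^2\varepsilon^{d+1})^{-1}\|D_{G_{n,\varepsilon}}\theta\|_1$ to the weighted total variation $\TV(\,\cdot\,;\omega_{P,Q}^2)$, together with the attendant compactness; and (iii) continuity of the linear functional $\theta\mapsto a^\top\theta = P_{n_1}(\theta)-Q_{n_2}(\theta)$ along converging sequences. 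Writing $d_{\DTV}$ in the ratio form \eqref{eqn:graph-tv-ipm-ratio}, we have $\sigma n^2\varepsilon^{d+1}\,d_{\DTV}(\mc{X}_{n_1},\mc{Y}_{n_2}) = \sup_{\theta\in\Rn} a^\top\theta\,\big/\,\big[(\sigma n^2\varepsilon^{d+1})^{-1}\|D_{G_{n,\varepsilon}}\theta\|_1\big]$, and the goal is to push the supremum to the limit, obtaining $\sup_f(\E_P f-\E_Q f)/\TV(f;\omega_{P,Q}^2) = d_{\BV}(P,Q;\omega_{P,Q}^2)$, the ratio form of the weighted TV IPM.

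First I would set up the coupling. Transporting $P$ onto $P_{n_1}$ and $Q$ onto $Q_{n_2}$ using $\infty$-optimal-transport estimates for i.i.d.\ samples from densities bounded below (as in \citet{garciatrillos2016continuum}) and combining, one obtains maps $T_n:\Omega\to\mc{Z}_n$ pushing $\nu$ forward to $\mu_n$ with $\|T_n-\mathrm{id}\|_{L^\infty}\lesssim(\log n/n)^{1/d}$ when $d\ge 3$, and $\lesssim(\log n)^{3/4}n^{-1/2}$ when $d=2$, on events of probability $1-O(n^{-2})$ --- hence a.s.\ for all large $n$ by Borel--Cantelli. This is exactly where \eqref{eqn:graph-radius-continuum-limit} enters: it forces $\varepsilon$ to dominate the transport scale, so that the $\varepsilon$-graph is ``stable'' under $T_n$ (and connected). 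Each $\theta\in\Rn$ is then identified with a function $u_n\in L^1(\nu)$ via (a suitable discretization of) $T_n$, and $TL^1$-convergence $u_n\to f$ is the notion of discrete-to-continuum convergence. Since $\mu\ge 1/B$ forces $\omega_{P,Q}\ge 1/B$ while the density bounds force $\omega_{P,Q}\le 2B$, the measure $\nu$ is comparable to Lebesgue measure on $\Omega$; thus $L^1(\nu)$-convergence implies $L^1(P)$- and $L^1(Q)$-convergence, and combining this with the transport estimate and the strong law yields $a^\top\theta_n = P_{n_1}(u_n)-Q_{n_2}(u_n)\to\E_P[f]-\E_Q[f]$ whenever $u_n\to f$ in $TL^1$ with $\sup_n\|u_n\|_{L^\infty}<\infty$.

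Next I would record the $\Gamma$-convergence and compactness for the graph TV functional: the liminf inequality $\liminf_n(\sigma n^2\varepsilon^{d+1})^{-1}\|D_{G_{n,\varepsilon}}\theta_n\|_1\ge\TV(f;\omega_{P,Q}^2)$ whenever $u_n\to f$ in $TL^1$; the existence of recovery sequences (take $\theta_{n,i}=f(Z_i)$ for smooth $f$, then pass to general $f\in\BV$ by strict approximation and a diagonal argument); and compactness --- any sequence with $\sup_n(\sigma n^2\varepsilon^{d+1})^{-1}\|D_{G_{n,\varepsilon}}\theta_n\|_1<\infty$ and $\sup_n\|u_n\|_{L^\infty}<\infty$ is precompact in $TL^1$, with all limit points in $\BV$. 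Given the transport estimate of the previous step, all of this follows the template of \citet{garciatrillos2016continuum} and subsequent works; the weight $\omega_{P,Q}^2$ --- the \emph{square} of the limiting mixture density --- appears because the graph TV is a double sum over pairs of points. The lower bound in \eqref{eqn:graph-ks-consistency} is then immediate: feed a near-optimal continuum witness $f$ (with $\TV(f;\omega_{P,Q}^2)\le1$ and $\E_P f-\E_Q f$ close to $d_{\BV}(P,Q;\omega_{P,Q}^2)$) through a recovery sequence $\theta_n$, normalize by $\|D_{G_{n,\varepsilon}}\theta_n\|_1$ so that the result is feasible for the graph constraint, and read off $\sigma n^2\varepsilon^{d+1}\,d_{\DTV}\ge a^\top\theta_n\,/\,\big[(\sigma n^2\varepsilon^{d+1})^{-1}\|D_{G_{n,\varepsilon}}\theta_n\|_1\big]\to(\E_P f-\E_Q f)/\TV(f;\omega_{P,Q}^2)$, then take the supremum over $f$.

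For the matching upper bound I would invoke the binary representation \eqref{eqn:representation-graph-tv-ipm}: a maximizer of the ratio may be taken to be $\mathbf{1}_{S_n}$, so $\sigma n^2\varepsilon^{d+1}\,d_{\DTV} = a^\top\mathbf{1}_{S_n}\,/\,\big[(\sigma n^2\varepsilon^{d+1})^{-1}\|D_{G_{n,\varepsilon}}\mathbf{1}_{S_n}\|_1\big]$, with $\|D_{G_{n,\varepsilon}}\mathbf{1}_{S_n}\|_1 = 2\cut_{G_{n,\varepsilon}}(S_n)$. Passing to an arbitrary subsequence, there are two cases. If the rescaled cut stays bounded below, then it must also be bounded above (otherwise the ratio would vanish, contradicting the liminf), so compactness gives $\mathbf{1}_{S_n}\to\mathbf{1}_S$ in $TL^1$ with $S$ of finite $\omega_{P,Q}^2$-weighted perimeter; the liminf inequality together with continuity of $a^\top(\cdot)$ then bounds the limiting ratio by $(P(S)-Q(S))/\TV(\mathbf{1}_S;\omega_{P,Q}^2)\le d_{\BV}(P,Q;\omega_{P,Q}^2)$. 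If instead the rescaled cut tends to $0$, a graph isoperimetric inequality valid for $\varepsilon$ above the threshold \eqref{eqn:graph-radius-continuum-limit} forces $\min(\mu_n(S_n),\mu_n(S_n^c))\to0$, and since $|a^\top\mathbf{1}_{S_n}|\lesssim\min(\mu_n(S_n),\mu_n(S_n^c))$ --- because $P_{n_1}$ and $Q_{n_2}$ are each dominated by a constant multiple of $\mu_n$, applied to both $S_n$ and $S_n^c$ using $a^\top\onevec=0$ --- the rescaled ratio itself tends to $0$. Either way $\limsup_n\sigma n^2\varepsilon^{d+1}\,d_{\DTV}\le d_{\BV}(P,Q;\omega_{P,Q}^2)$, which combined with the liminf establishes \eqref{eqn:graph-ks-consistency}. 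I expect the main obstacle to be precisely this last step --- excluding the degenerate regime in which the witness sets $S_n$ concentrate on a vanishing fraction of the mass, so that the optimizing ratio fails to ``see'' any limiting continuum function --- which is where one genuinely needs the graph isoperimetric/Poincar\'e inequality at scale $\varepsilon$, and hence the sharp lower bound \eqref{eqn:graph-radius-continuum-limit} on the radius (also needed, via Borel--Cantelli, to upgrade high-probability coupling and connectivity statements to a.s.\ ones).
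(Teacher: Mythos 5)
Your proposal is recognizably the same skeleton as the paper's proof: transport coupling into a $\mathrm{TL}^1$-type space, $\Gamma$-convergence and compactness of the rescaled graph TV to the $\omega_{P,Q}^2$-weighted TV, recovery sequences for the lower bound, and the binary representation~\eqref{eqn:representation-graph-tv-ipm} plus compactness for the upper bound. The packaging, however, differs in a way that matters. The paper does not argue directly with the Rayleigh ratio. Instead it defines auxiliary functionals $E_n, E$ on a two-sample space $\mathrm{TL}_2^1(\Omega)$ that \emph{bake the balance normalization into the domain}: $E_n(u) = \GTV_{n,\varepsilon}(u)$ for $u$ with $B_n(u):=|P_{n_1}(u)-Q_{n_2}(u)|=1$ and $E_n(u)=\infty$ otherwise, and analogously for $E$. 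Then $\sigma n^2\varepsilon^{d+1}d_{\DTV} = (\min E_n)^{-1}$ and $d_{\BV}(P,Q;\omega_{P,Q}^2)=(\min E)^{-1}$, and the result follows from the fundamental theorem of $\Gamma$-convergence once a compactness property for $E_n$ is established. The $P=Q$ case is treated separately (via the finite-sample bounds of Theorem~\ref{thm:graph-tv} and Borel--Cantelli), since then $E\equiv\infty$ and the fundamental theorem does not apply.

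The one genuine gap in your version is the degenerate branch of the upper bound. You write that if the rescaled cut of $\1_{S_n}$ tends to $0$, then a graph isoperimetric inequality forces $\min(\mu_n(S_n),\mu_n(S_n^c))\to 0$, and ``since $|a^\top\1_{S_n}|\lesssim \min(\mu_n(S_n),\mu_n(S_n^c))$\ldots the rescaled ratio itself tends to $0$.'' But both the numerator and the denominator are vanishing, so this conclusion does not follow. A Cheeger-type bound $\cut_{G_{n,\varepsilon}}(S) \gtrsim \sigma n^2\varepsilon^{d+1}\min(\mu_n(S),\mu_n(S^c))$ only gives that the ratio is bounded by a constant (roughly the reciprocal of the Cheeger constant of $\Omega$), not that it vanishes, and that constant is generically larger than $d_{\BV}(P,Q;\omega_{P,Q}^2)$. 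To make the ratio vanish you would need the sharper isoperimetric bound $\cut_{G_{n,\varepsilon}}(S)\gtrsim \sigma n^2\varepsilon^{d+1}\min(\mu_n(S),\mu_n(S^c))^{(d-1)/d}$, which is a strictly stronger statement and is not what the cited $\Gamma$-convergence results provide off the shelf. The paper's normalization device avoids this entirely: with $B_n(u_n)=1$ fixed, the elementary bound $1=|B_n(u_n)|\lesssim\|u_n\|_{L^1(\mu_n)}$ together with the Cheeger/ratio-cut bound $\|u_n\|_{L^1(\mu_n)}\lesssim\GTV_{n,\varepsilon}(u_n)$ (via the~\citet{hein2011beyond} representation and the ratio-cut consistency of~\citet{garciatrillos2016consistency}) forecloses the possibility of $\GTV_{n,\varepsilon}(u_n^{\ast})\to 0$, and ordinary $TL^1$-compactness plus the liminf inequality then close the argument. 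If you want to stay with the direct ratio formulation, you should either prove the sharper discrete isoperimetric inequality, or switch to the constrained/normalized formulation as the paper does.

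Two smaller remarks: you never treat $P=Q$ separately, but your argument relies on the existence of a near-optimal continuum witness with strictly positive $\TV(\cdot;\omega_{P,Q}^2)$, which requires $P\neq Q$; and the $\mathrm{TL}^1$-compactness step you invoke is stated for $L^1$-bounded sequences, so it applies to indicators $\1_{S_n}$, but for the lower bound (recovery sequences for general $f\in\BV$) you need to go through smooth or Lipschitz approximations and a diagonal argument, exactly as the paper spells out.
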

The proof of Theorem~\ref{thm:graph-ks-consistency} relies on establishing a variational mode of convergence known as \emph{$\Gamma$-convergence}. $\Gamma$-convergence has been previously applied to establish asymptotic convergence of various graph-based functionals~\citep{garciatrillos2016consistency,garciatrillos2018variational}, and we apply and extend some of these results to prove~\eqref{eqn:graph-ks-consistency}. Compared to the theory of Section~\ref{sec:optimality}, Theorem~\ref{thm:graph-ks-consistency} is more precise as it identifies the exact asymptotic limit of $d_{\DTV}(\mc{X}_{n_1},\mc{Y}_{n_2})$. On the other hand, $\Gamma$-convergence has no non-asymptotic implications, and so Theorem~\ref{thm:graph-ks-consistency} says nothing about rates of convergence or minimax risk.

The discrepancy $d_{\BV}(P,Q; \omega_{P,Q}^2)$ is unusual in that the distributions $P,Q$ determine the constraint set underlying the IPM. Compared to the usual definition of total variation, the constraint $\TV(f;\omega_{P,Q}^2) \leq 1$ requires functions $f$ to be smoother in high-density regions, but allows them to be rougher in low-density regions. Additionally, since the test functions $\phi$ used in the definition of~\eqref{eqn:weighted-tv} must be compactly supported in $\Omega$, the constraint allows functions $f$ to be rougher at the boundary $\partial \Omega$. In some cases it may be quite useful to have an IPM where the constraint depends on the distributions. For example, suppose $P,Q$ are concentrated around or on a lower-dimensional subspace of $\Rd$, e.g. a manifold. In this case, it seems more natural to measure smoothness on this lower-dimensional support, rather than measuring smoothness over all of $\Rd$ as in the usual definition of TV.

\begin{remark}
	For certain problems there may be reason to prefer a differently-weighted TV IPM $d_{\BV}(P,Q;\omega)$, or even the unweighted TV IPM $d_{\BV}(P,Q)$, over $d_{\BV}(P,Q;\omega_{P,Q}^2)$. We know of two ways to determine the influence of $P,Q$ in the asymptotic weighting. The first is to explicitly reweight the graph TV functional; this is essentially the recommendation of~\citet{coifman2006diffusion}, although they consider a different graph operator. The other approach is to use a different graph. For example, in previous work, two of us (the authors) considered a graph derived from a Voronoi tesselation of the data, and showed that the resulting Voronoi graph TV converges to unweighted TV in the continuum limit~\citep{hu2022voronoigram}. An advantage of this approach, as opposed to explicitly re-weighting the edges in the graph, is that it will also mitigate the boundary effects.
\end{remark}

\section{Numerical Experiments}
\label{sec:experiments}

\subsection{Graph TV test versus the chi-squared test}
\label{subsec:simulations}
We illustrate the relevance of our theory in finite samples by comparing the power of the graph TV test to the chi-squared test in a small simulation study. In each simulation, $n_1 = n_2 = 100,000$ samples are drawn from an alternative $(P_0,P_{s,\Delta}) \in \mc{P}_{s,\eta}$, where $P_{s,\Delta}$ has density $p_{s,\Delta} = 1 + \frac{s}{2}\phi_{\Delta}(x)$, and $\phi_{\Delta}$ is defined as in Section~\ref{subsec:spatially-localized-alternatives-graphtvipm}. The parameters $\eta$ and $s$ control the spatial localization and signal strength, respectively. The point of this experiment is to see, at various levels of spatial localization, how strong the signal strength needs to be in order for the graph TV and chi-squared tests to have high power.


For these experiments the graph TV test does \emph{not} use the $\varepsilon$-neighborhood graph. Instead the graph TV IPM is computed by binning the domain and forming a lattice graph over the bins. (For more details see Section~\ref{sec:numerical-experiments-additional-details}).  This is done computational reasons: with $n = 200,000$ samples, the lattice graph (for a reasonable choice of binwidth) will typically have many fewer edges than the $\varepsilon$-neighborhood graph. Of course, in this case the alternatives are also defined in terms of bins and there so is a statistical advantage to constructing the graph by using bins. But the chi-squared test shares this advantage and so we feel the comparison is a fair one.

Figure~\ref{fig:conceptual-example} compares the area under the ROC curve for the ``binned graph'' TV test, with binwidth $\varepsilon = 0.02$, to the chi-squared test with binwidths $\varepsilon = 0.02, 0.1, 0.5$, across different choices of spatial localization $\eta$. For small $\eta$, the graph TV test outperforms each of the chi-squared tests, as our theory predicts. When there is less spatial localization, the optimally tuned chi-squared test performs slightly better than the graph TV test -- but this is actually somewhat impressive for the graph TV test, since it has \emph{not} been optimally tuned. On this evidence, it seems that in addition to being highly sensitive to spatially localized alternatives, the graph TV test may also have reasonable power against spatially contiguous alternatives that differ at larger scales. 

\begin{figure}[htb]
	\centering
	\begin{subfigure}[t]{.32\textwidth}
		\includegraphics[width=\textwidth]{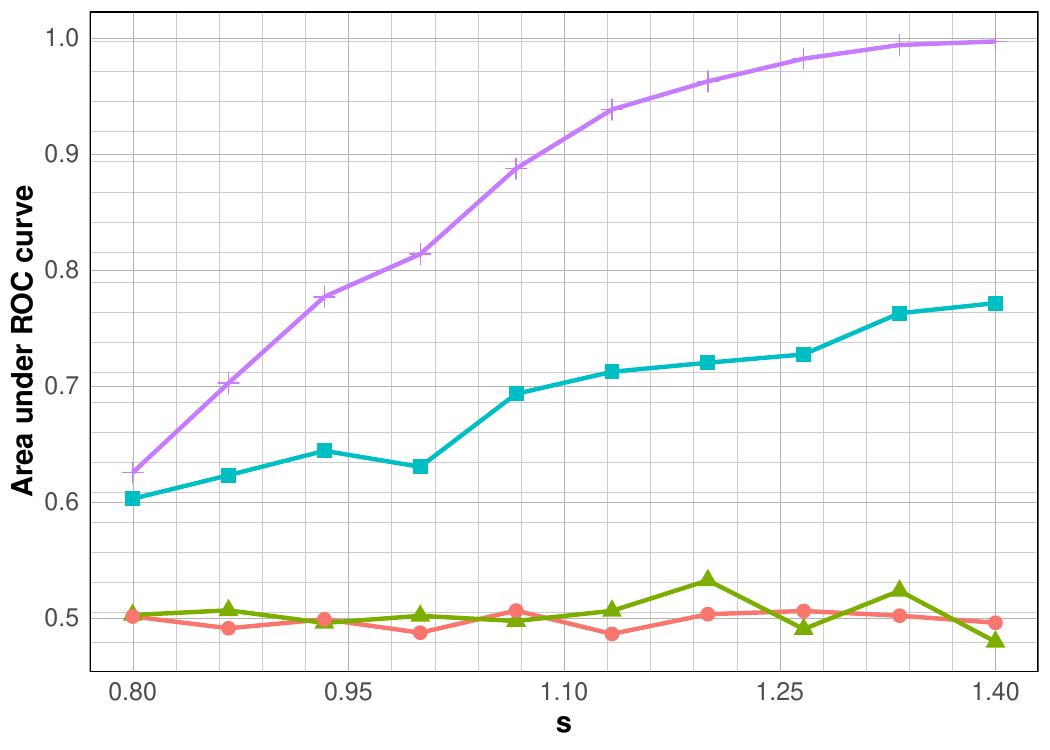}
		\caption{$\eta = 0.02$.}
	\end{subfigure}
	\begin{subfigure}[t]{.32\textwidth}
		\includegraphics[width=\textwidth]{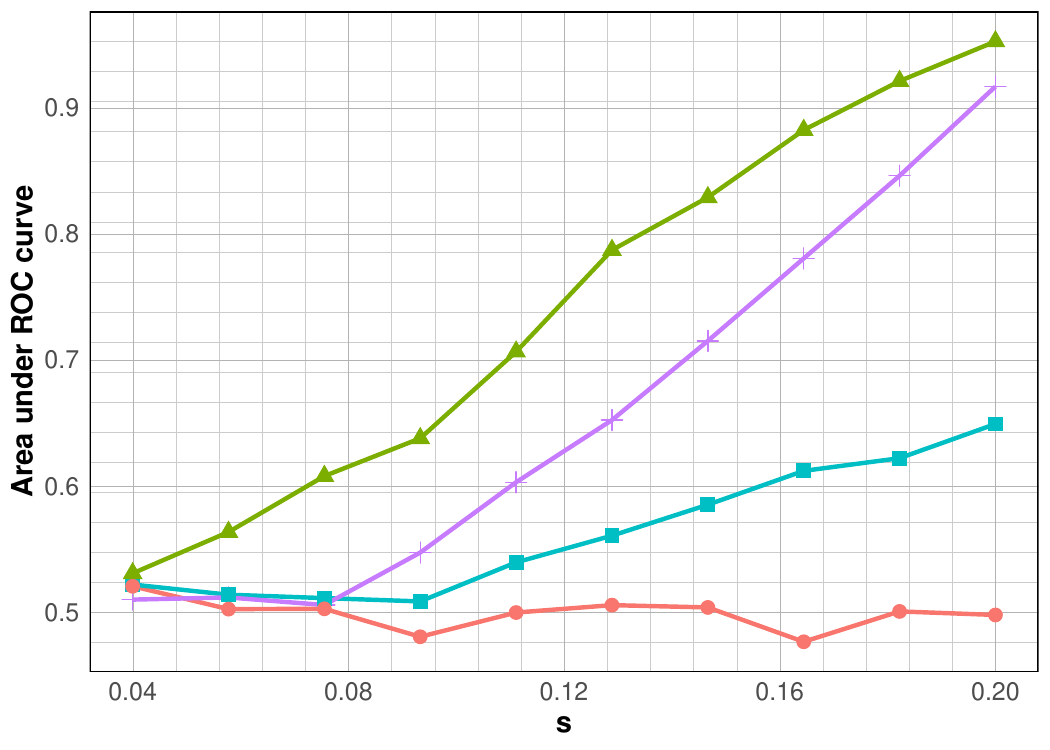}
		\caption{$\eta = 0.1$.}
	\end{subfigure}
	\begin{subfigure}[t]{.32\textwidth}
		\includegraphics[width=\textwidth]{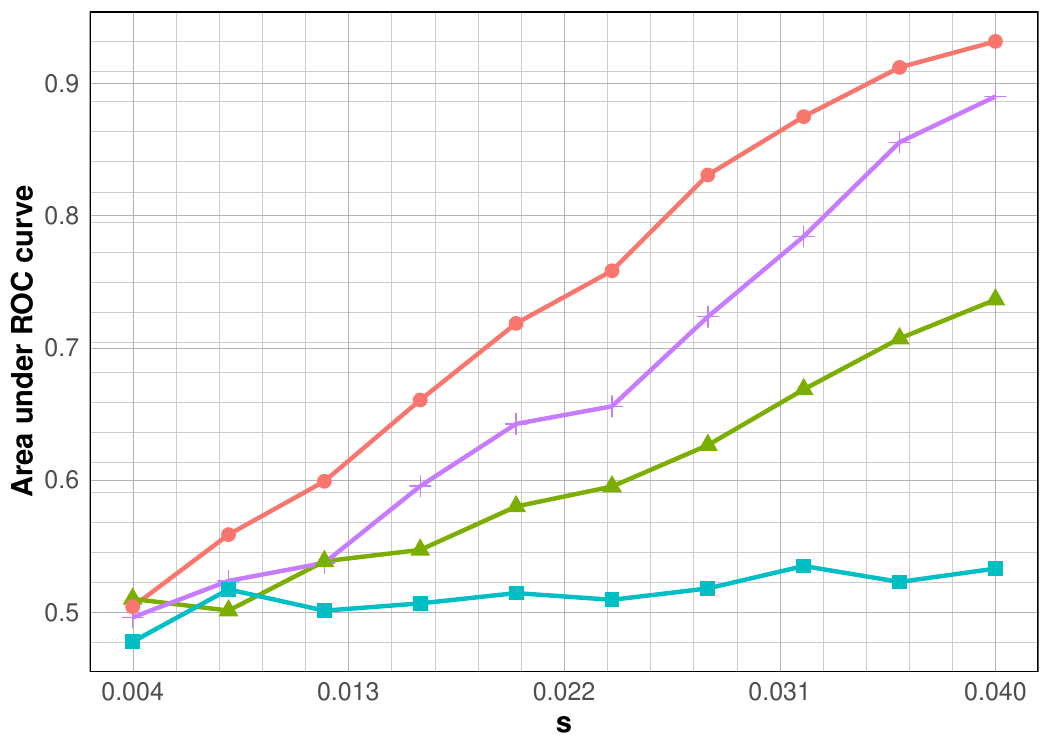}
		\caption{$\eta = 0.5$.}
	\end{subfigure}
	\caption{Area under the ROC curve (AUC) across different alternatives. Each plot corresponds to a different $\eta$. The signal strength $s$ is varied on $x$-axis; note that the scale of the $x$-axis differs in each plot. Tests compared are the graph TV test (\textcolor{Purple}{purple}) and the chi-squared test with binwidth $\varepsilon = 0.02$ (\textcolor{Aquamarine}{light blue}), $\varepsilon = 0.1$ (\textcolor{SpringGreen}{green}) and $\varepsilon = 0.5$ (\textcolor{orange}{orange}).}
	\label{fig:conceptual-example}
\end{figure}

\subsection{Chicago crime data}
For a real data example, we look at a dataset of robberies in Chicago in 2022, and use the graph TV IPM to distinguish between a subset of the true data, and synthetic samples drawn from a Gaussian mixture model fit to some separate training data. The goal is to identify whether the synthetic and real data are drawn from statistically indistinguishable distributions, and secondarily, to determine where the distributions differ. 

Figure~\ref{fig:real-data} visualizes the results. As pointed out by~\citet{jitkrittum2017linear}, a $2$-component Gaussian mixture model (GMM) is a poor fit to the data due to the geography of Chicago and Lake Michigan, and the graph TV test is significant at the $5\%$ level. Examining the witness function shows that the graph TV IPM has identified a small region near the lake where there are many more robberies than anticipated by the mixture model. Indeed, when a GMM is fit with three or more components, one component is always specifically dedicated to modeling the presence of an anomalous high-density ``cluster'' in this region. The fit is improved when $10$ components are used, although the graph TV test is still significant at the $10\%$ level.

\begin{figure}[htbp]
	\centering
	\begin{subfigure}{0.45\textwidth}
		\centering
		\includegraphics[width=\linewidth]{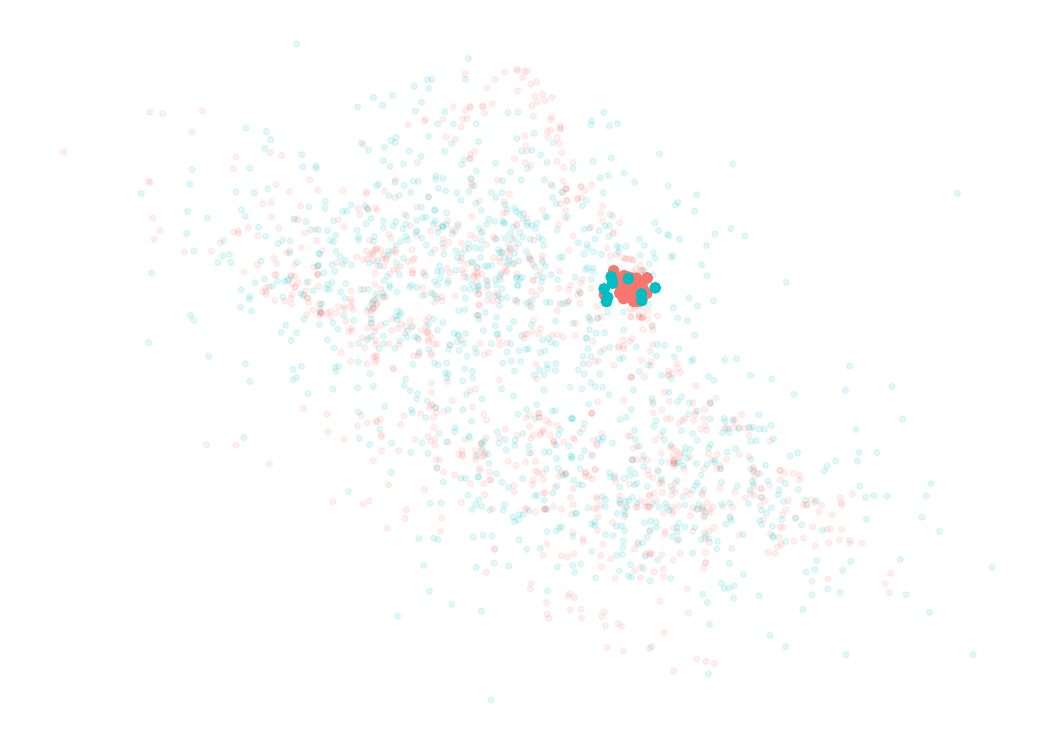} 
		\caption{\textcolor{orange}{Real} and \textcolor{Aquamarine}{simulated} data.}
	\end{subfigure}\hfill
	\begin{subfigure}{0.45\textwidth}
		\centering
		\includegraphics[width=\linewidth]{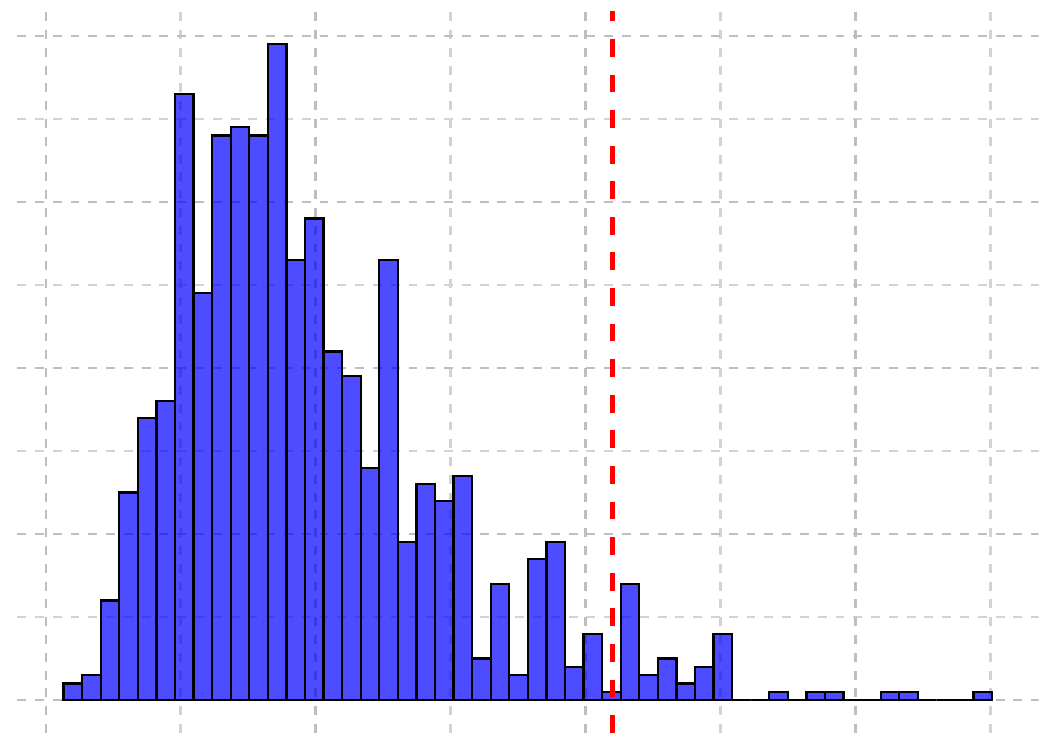} 
		\caption{$p =.043$.}
	\end{subfigure}
	\begin{subfigure}{0.45\textwidth}
		\centering
		\includegraphics[width=\linewidth]{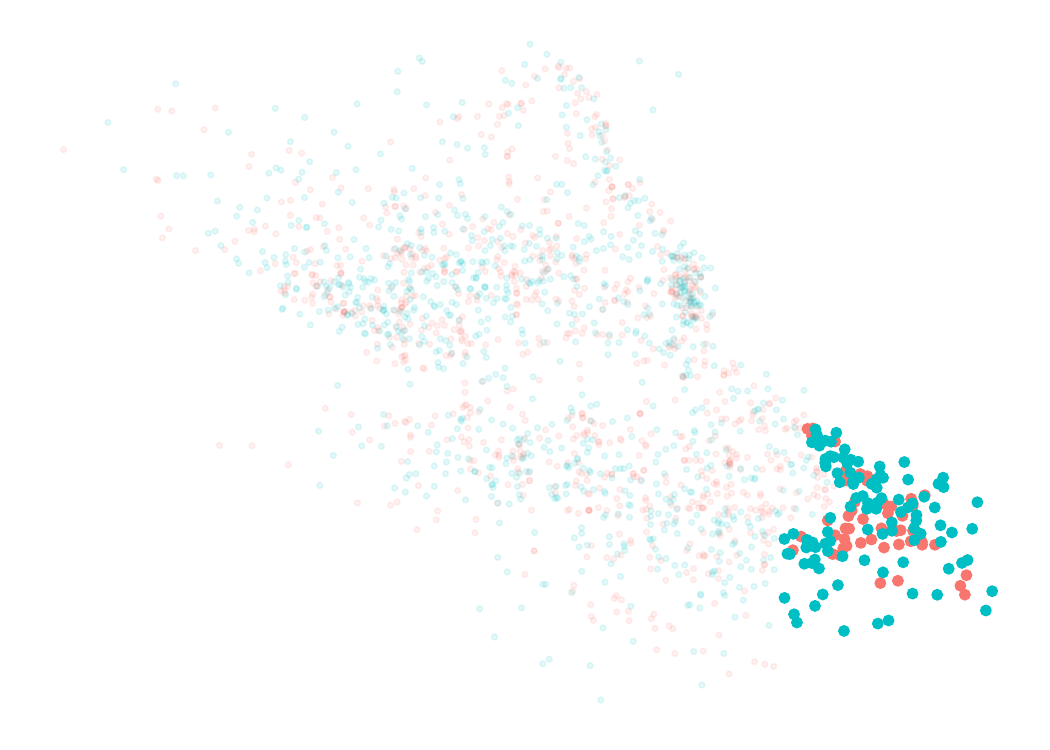} 
		\caption{\textcolor{orange}{Real} and \textcolor{Aquamarine}{simulated} data.}
	\end{subfigure}\hfill
	\begin{subfigure}{0.45\textwidth}
		\centering
		\includegraphics[width=\linewidth]{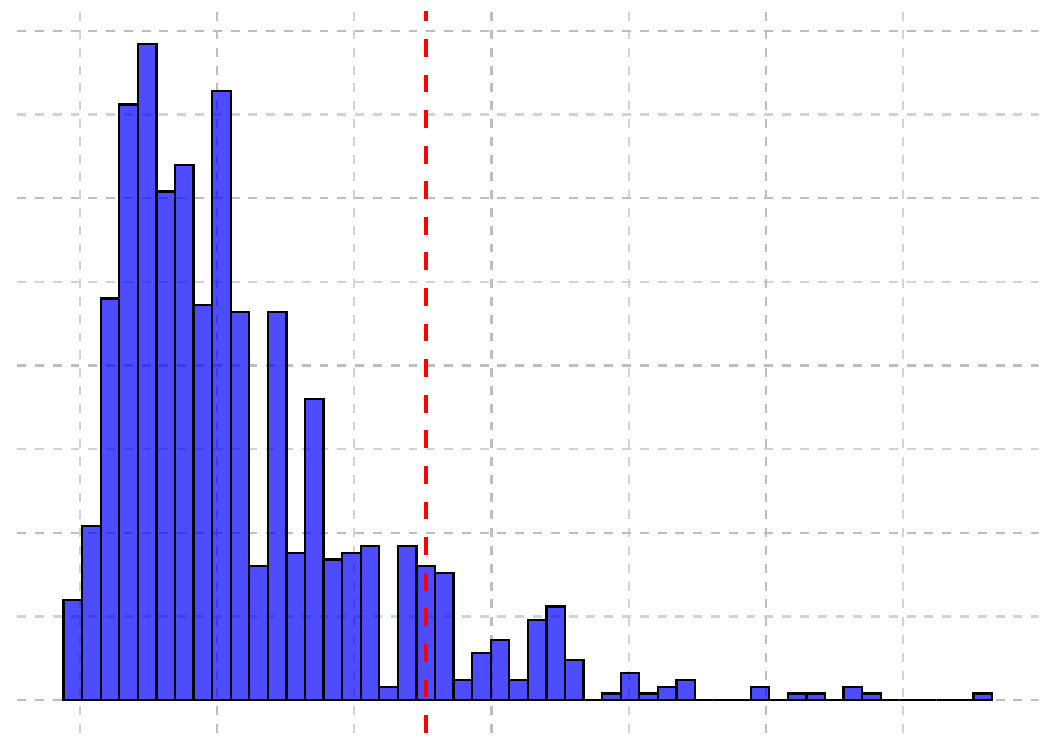} 
		\caption{$p =.093$.}
	\end{subfigure}
	\caption{Chicago crime data. Left column: Real data compared with data sampled from the Gaussian mixture model. Bolded points represent the support of the witness function. Right column: permutation distribution, with test statistic marked by vertical red dashed line. Top row: $2$-components; bottom row: $10$-components.}
	\label{fig:real-data}
\end{figure}

\section{Extensions}
\label{sec:discussion}

In this paper we have proposed a novel multivariate nonparametric test using a graph TV IPM, and shown that it has optimal power against a class of alternatives -- distributions with densities bounded from above and below that are well-separated according to a continuum TV IPM -- that include spatially localized alternatives as a special case. 

One advantage of using IPMs for statistical inference is that is typically straightforward to adapt a method designed for one problem -- such as two-sample testing -- to test other nonparametric hypotheses. (See e.g Section 7 of~\citet{gretton2012kernel}, which discusses several such adaptations of a two-sample kernel test.) We now explain how to alter two-sample graph TV test to test two different canonical nonparametric hypotheses.

\subsection{Goodness-of-fit testing}

In goodness-of-fit testing -- also known as one-sample testing -- one observes $X_1,\ldots,X_{n} \sim P$ and tests the null hypothesis $H_0: P = P_0$ where $P_0$ is a known distribution. As currently formulated, we cannot measure the distance between $\mc{X}_{n}$ and $P_0$ using the graph TV IPM, since the statistic is not defined for continuous distributions. However this is not really an issue, so long as we can sample from $P_0$. In that case, we would simply draw $n_0$ independent samples $Y_1,\ldots,Y_{n_0}$ from $P_0$, and use $d_{\DTV(G)}(\mc{X}_{n_1},\mc{Y}_{n_0})$ to empirically assess goodness-of-fit. A corresponding test could be calibrated by permutation exactly as in the two-sample case. In principle $n_0$ would be another user-specified tuning parameter, though in practice we would expect that setting $n_0 = n$ would often be a reasonable choice. Indeed for $n_0 = n$ our theory implies that if $(P,P_0) \in \mc{P}^{\infty}(d)$, then the resulting graph TV test, calibrated by permutation, will have non-trivial power if $d_{\BV}(P,P_0) \gtrsim (\log n/n)^{1/d}$ for $d \geq 3$, and if $d_{\BV}(P,P_0) \gtrsim \log n/n^{1/2}$ when $d = 2$. 

\subsection{Regression testing}
In regression testing -- also known as specification testing --  we observe independent pairs $(Z_i,U_i) \sim P,i = 1,\ldots,n$ with $Z_i \in \Reals^d$ being the (random) covariates and $U_i \in \Reals$ being the response. We suppose that $(Z,U) \sim P$ can be modeled as $U = \mu(Z) + W$ with errors $W = U - \mu(Z)$ satisfying $\mathbb{E}[U|Z] = 0$ and $W \ind Z$. The goal is to test the null hypothesis $H_0: \mu \in \mc{M}_0$, where $\mc{M}_0$ is typically a finite-dimensional parametric model for the conditional mean. In this discussion we will take $\mc{M}_0 = \{\mu_0\}$ to be a point null: a noteworthy example is $\mu_0 = 0$, in which case the problem is to detect whether any signal is present. 

The following defines a TV IPM between $\mu_0$ and the true conditional mean $\mu$:
$$
d_{\BV}(\mu_0,\mu) = \sup_{f: \TV(f) \leq 1} \int f(x) (\mu(z) - \mu_0(z)) \,dP_Z(z),
$$
where $P_Z$ is the marginal distribution of $Z$. Under suitable regularity conditions the functional $d_{\BV}(\mu_0,\mu) = 0$ if and only if $\mu = \mu_0$ $P_Z$-almost everywhere. The point null $H_0: \mu = \mu_0$ can be tested using the following adaptation of the graph TV IPM:
$$
d_{\DTV(G)}(e) := \sup_{\theta: \|D_{G}\theta\|_1 \leq 1} \; \sum_{i = 1}^{n} (Y_i  - \mu_0(Z_i))\theta_i,
$$
where $e = (Y_1  - \mu_0(Z_1),\ldots,Y_n - \mu_0(Z_n)) \in \Reals^n$. If one is willing to specify a particular distribution for the errors -- for instance $W \sim N(0,\sigma^2)$ -- then a test using $d_{\DTV(G)}(e)$ can be calibrated by Monte Carlo. Otherwise, we can calibrate by permuting the errors $e$. This results in a correctly calibrated test -- conditional on $\mc{Z}_n$ and hence marginally -- because under $H_0$, both $e$ and $e_{\pi} := (e_{\pi(1)},\ldots,e_{\pi(n)})$ have the same distribution conditionally on $\mc{Z}_n$.

\bibliographystyle{plainnat}
\bibliography{bibliography}

\begin{thebibliography}{74}
\providecommand{\natexlab}[1]{#1}
\providecommand{\url}[1]{\texttt{#1}}
\expandafter\ifx\csname urlstyle\endcsname\relax
  \providecommand{\doi}[1]{doi: #1}\else
  \providecommand{\doi}{doi: \begingroup \urlstyle{rm}\Url}\fi

\bibitem[Albert(2015)]{albert2015tests}
M{\'e}lisande Albert.
\newblock \emph{Tests of independence by bootstrap and permutation: an
  asymptotic and non-asymptotic study. Application to neurosciences.}
\newblock PhD thesis, Universit{\'e} Nice Sophia Antipolis, 2015.

\bibitem[Albert(2019)]{albert2019concentration}
M{\'e}lisande Albert.
\newblock Concentration inequalities for randomly permuted sums.
\newblock In \emph{High Dimensional Probability VIII: The Oaxaca Volume}, pages
  341--383. Springer, 2019.

\bibitem[Arias-Castro et~al.(2011)Arias-Castro, Cand{\`e}s, and
  Plan]{ariascastro2011global}
Ery Arias-Castro, Emmanuel~J Cand{\`e}s, and Yaniv Plan.
\newblock Global testing under sparse alternatives: Anova, multiple comparisons
  and the higher criticism.
\newblock \emph{The Annals of Statistics}, 39\penalty0 (5):\penalty0
  2533--2556, 2011.

\bibitem[Arias-Castro et~al.(2018)Arias-Castro, Pelletier, and
  Saligrama]{ariascastro2018remember}
Ery Arias-Castro, Bruno Pelletier, and Venkatesh Saligrama.
\newblock Remember the curse of dimensionality: The case of goodness-of-fit
  testing in arbitrary dimension.
\newblock \emph{Journal of Nonparametric Statistics}, 30\penalty0 (2):\penalty0
  448--471, 2018.

\bibitem[Bach et~al.(2013)]{bach2013learning}
Francis Bach et~al.
\newblock Learning with submodular functions: A convex optimization
  perspective.
\newblock \emph{Foundations and Trends{\textregistered} in machine learning},
  6\penalty0 (2-3):\penalty0 145--373, 2013.

\bibitem[Balakrishnan and Wasserman(2019)]{balakrishnan2019hypothesis}
Sivaraman Balakrishnan and Larry Wasserman.
\newblock Hypothesis testing for densities and high-dimensional multinomials:
  Sharp local minimax rates.
\newblock \emph{The Annals of Statistics}, 47\penalty0 (4):\penalty0
  1893--1927, 2019.

\bibitem[Balasubramanian et~al.(2021)Balasubramanian, Li, and
  Yuan]{balasubramanian2021optimality}
Krishnakumar Balasubramanian, Tong Li, and Ming Yuan.
\newblock On the optimality of kernel-embedding based goodness-of-fit tests.
\newblock \emph{The Journal of Machine Learning Research}, 22\penalty0
  (1):\penalty0 1--45, 2021.

\bibitem[Belkin and Niyogi(2003)]{belkin2003laplacian}
Mikhail Belkin and Partha Niyogi.
\newblock Laplacian eigenmaps for dimensionality reduction and data
  representation.
\newblock \emph{Neural computation}, 15\penalty0 (6):\penalty0 1373--1396,
  2003.

\bibitem[Belkin and Niyogi(2007)]{belkin2007convergence}
Mikhail Belkin and Partha Niyogi.
\newblock Convergence of {Laplacian} eigenmaps.
\newblock In \emph{Advances in Neural Information Processing Systems},
  volume~20, 2007.

\bibitem[Bhattacharya(2019)]{bhattacharya2019general}
Bhaswar~B Bhattacharya.
\newblock A general asymptotic framework for distribution-free graph-based
  two-sample tests.
\newblock \emph{Journal of the Royal Statistical Society: Series B (Statistical
  Methodology)}, 81\penalty0 (3):\penalty0 575--602, 2019.

\bibitem[Bhattacharya(2020)]{bhattacharya2020asymptotic}
Bhaswar~B Bhattacharya.
\newblock Asymptotic distribution and detection thresholds for two-sample tests
  based on geometric graphs.
\newblock \emph{The Annals of Statistics}, 48\penalty0 (5):\penalty0
  2879--2903, 2020.

\bibitem[Boykov and Kolmogorov(2004)]{boykov2004experimental}
Yuri Boykov and Vladimir Kolmogorov.
\newblock An experimental comparison of min-cut/max-flow algorithms for energy
  minimization in vision.
\newblock \emph{IEEE transactions on pattern analysis and machine
  intelligence}, 26\penalty0 (9):\penalty0 1124--1137, 2004.

\bibitem[Braides(2006)]{braides2006handbook}
Andrea Braides.
\newblock A handbook of $\gamma$-convergence.
\newblock In \emph{Handbook of Differential Equations: stationary partial
  differential equations}, volume~3, pages 101--213. Elsevier, 2006.

\bibitem[Chambolle and Lions(1997)]{chambolle1997image}
Antonin Chambolle and Pierre-Louis Lions.
\newblock Image recovery via total variation minimization and related problems.
\newblock \emph{Numerische Mathematik}, 76:\penalty0 167--188, 1997.

\bibitem[Chambolle et~al.(2010)Chambolle, Caselles, Cremers, Novaga, and
  Pock]{chambolle2010introduction}
Antonin Chambolle, Vicent Caselles, Daniel Cremers, Matteo Novaga, and Thomas
  Pock.
\newblock An introduction to total variation for image analysis.
\newblock \emph{Theoretical foundations and numerical methods for sparse
  recovery}, 9\penalty0 (263-340):\penalty0 227, 2010.

\bibitem[Chan et~al.(2000)Chan, Marquina, and Mulet]{chan2000high}
Tony Chan, Antonio Marquina, and Pep Mulet.
\newblock High-order total variation-based image restoration.
\newblock \emph{SIAM Journal on Scientific Computing}, 22\penalty0
  (2):\penalty0 503--516, 2000.

\bibitem[Cheeger(1970)]{cheeger1970lower}
Jeff Cheeger.
\newblock A lower bound for the smallest eigenvalue of the laplacian, problems
  in analysis (papers dedicated to salomon bochner, 1969), 1970.

\bibitem[Chen and Friedman(2017)]{chen2017new}
Hao Chen and Jerome~H Friedman.
\newblock A new graph-based two-sample test for multivariate and object data.
\newblock \emph{Journal of the American statistical association}, 112\penalty0
  (517):\penalty0 397--409, 2017.

\bibitem[Chernozhukov et~al.(2017)Chernozhukov, Galichon, Hallin, and
  Henry]{chernkozhukov2017monge}
Victor Chernozhukov, Alfred Galichon, Marc Hallin, and Marc Henry.
\newblock {Monge–Kantorovich depth, quantiles, ranks and signs}.
\newblock \emph{The Annals of Statistics}, 45\penalty0 (1):\penalty0 223 --
  256, 2017.

\bibitem[Coifman and Lafon(2006)]{coifman2006diffusion}
Ronald~R Coifman and St{\'e}phane Lafon.
\newblock Diffusion maps.
\newblock \emph{Applied and computational harmonic analysis}, 21\penalty0
  (1):\penalty0 5--30, 2006.

\bibitem[Cram{\'e}r(1928)]{cramer1928composition}
Harald Cram{\'e}r.
\newblock On the composition of elementary errors: First paper: Mathematical
  deductions.
\newblock \emph{Scandinavian Actuarial Journal}, 1928\penalty0 (1):\penalty0
  13--74, 1928.

\bibitem[Donoho and Jin(2004)]{donoho2004higher}
David Donoho and Jiashun Jin.
\newblock Higher criticism for detecting sparse heterogeneous mixtures.
\newblock \emph{The Annals of Statistics}, 32\penalty0 (3):\penalty0 962--994,
  2004.

\bibitem[Friedman(2003)]{friedman2003multivariate}
Jerome~H Friedman.
\newblock On multivariate goodness--of--fit and two--sample testing.
\newblock \emph{Statistical Problems in Particle Physics, Astrophysics, and
  Cosmology}, 1:\penalty0 311, 2003.

\bibitem[Friedman and Rafsky(1979)]{friedman1979multivariate}
Jerome~H Friedman and Lawrence~C Rafsky.
\newblock Multivariate generalizations of the wald-wolfowitz and smirnov
  two-sample tests.
\newblock \emph{The Annals of Statistics}, pages 697--717, 1979.

\bibitem[Fukumizu et~al.(2009)Fukumizu, Gretton, Lanckriet, Sch{\"o}lkopf, and
  Sriperumbudur]{fukumizu2009kernel}
Kenji Fukumizu, Arthur Gretton, Gert Lanckriet, Bernhard Sch{\"o}lkopf, and
  Bharath~K Sriperumbudur.
\newblock Kernel choice and classifiability for rkhs embeddings of probability
  distributions.
\newblock \emph{Advances in neural information processing systems}, 22, 2009.

\bibitem[Garc{\'\i}a~Trillos and
  Slep{\v{c}}ev(2016)]{garciatrillos2016continuum}
Nicol{\'a}s Garc{\'\i}a~Trillos and Dejan Slep{\v{c}}ev.
\newblock Continuum limit of total variation on point clouds.
\newblock \emph{Archive for rational mechanics and analysis}, 220\penalty0 (1),
  2016.

\bibitem[Garc{\'\i}a~Trillos and
  Slep{\v{c}}ev(2018)]{garciatrillos2018variational}
Nicolas Garc{\'\i}a~Trillos and Dejan Slep{\v{c}}ev.
\newblock A variational approach to the consistency of spectral clustering.
\newblock \emph{Applied and Computational Harmonic Analysis}, 45\penalty0
  (2):\penalty0 239--281, 2018.

\bibitem[Garc{\'\i}a~Trillos et~al.(2016)Garc{\'\i}a~Trillos, Slep{\v{c}}ev,
  Von~Brecht, Laurent, and Bresson]{garciatrillos2016consistency}
Nicol{\'a}s Garc{\'\i}a~Trillos, Dejan Slep{\v{c}}ev, James Von~Brecht, Thomas
  Laurent, and Xavier Bresson.
\newblock Consistency of cheeger and ratio graph cuts.
\newblock \emph{The Journal of Machine Learning Research}, 17\penalty0
  (1):\penalty0 6268--6313, 2016.

\bibitem[Garc{\'\i}a~Trillos et~al.(2020{\natexlab{a}})Garc{\'\i}a~Trillos,
  Gerlach, Hein, and Slep{\v{c}}ev]{garciatrillos2020error}
Nicol{\'a}s Garc{\'\i}a~Trillos, Moritz Gerlach, Matthias Hein, and Dejan
  Slep{\v{c}}ev.
\newblock Error estimates for spectral convergence of the graph laplacian on
  random geometric graphs toward the laplace--beltrami operator.
\newblock \emph{Foundations of Computational Mathematics}, 20\penalty0
  (4):\penalty0 827--887, 2020{\natexlab{a}}.

\bibitem[Garc{\'\i}a~Trillos et~al.(2020{\natexlab{b}})Garc{\'\i}a~Trillos,
  Murray, and Thorpe]{garciatrillos2020graph}
Nicol{\'a}s Garc{\'\i}a~Trillos, Ryan Murray, and Matthew Thorpe.
\newblock From graph cuts to isoperimetric inequalities: Convergence rates of
  cheeger cuts on data clouds.
\newblock \emph{arXiv preprint arXiv:2004.09304}, 2020{\natexlab{b}}.

\bibitem[Green et~al.(2021{\natexlab{a}})Green, Balakrishnan, and
  Tibshirani]{green2021minimaxa}
Alden Green, Sivaraman Balakrishnan, and Ryan Tibshirani.
\newblock Minimax optimal regression over sobolev spaces via laplacian
  regularization on neighborhood graphs.
\newblock In \emph{International Conference on Artificial Intelligence and
  Statistics}, pages 2602--2610. PMLR, 2021{\natexlab{a}}.

\bibitem[Green et~al.(2021{\natexlab{b}})Green, Balakrishnan, and
  Tibshirani]{green2021minimaxb}
Alden Green, Sivaraman Balakrishnan, and Ryan~J Tibshirani.
\newblock Minimax optimal regression over sobolev spaces via laplacian
  eigenmaps on neighborhood graphs.
\newblock \emph{arXiv preprint arXiv:2111.07394}, 2021{\natexlab{b}}.

\bibitem[Gretton et~al.(2012)Gretton, Borgwardt, Rasch, Sch{\"o}lkopf, and
  Smola]{gretton2012kernel}
Arthur Gretton, Karsten~M Borgwardt, Malte~J Rasch, Bernhard Sch{\"o}lkopf, and
  Alexander Smola.
\newblock A kernel two-sample test.
\newblock \emph{The Journal of Machine Learning Research}, 13\penalty0
  (1):\penalty0 723--773, 2012.

\bibitem[Hallin et~al.(2021{\natexlab{a}})Hallin, Del~Barrio, Cuesta-Albertos,
  and Matr{\'a}n]{hallin2021distribution}
Marc Hallin, Eustasio Del~Barrio, Juan Cuesta-Albertos, and Carlos Matr{\'a}n.
\newblock Distribution and quantile functions, ranks and signs in dimension d:
  A measure transportation approach.
\newblock 2021{\natexlab{a}}.

\bibitem[Hallin et~al.(2021{\natexlab{b}})Hallin, Mordant, and
  Segers]{hallin2021multivariate}
Marc Hallin, Gilles Mordant, and Johan Segers.
\newblock {Multivariate goodness-of-fit tests based on Wasserstein distance}.
\newblock \emph{Electronic Journal of Statistics}, 15\penalty0 (1):\penalty0
  1328 -- 1371, 2021{\natexlab{b}}.

\bibitem[Hein and Setzer(2011)]{hein2011beyond}
Matthias Hein and Simon Setzer.
\newblock Beyond spectral clustering-tight relaxations of balanced graph cuts.
\newblock \emph{Advances in neural information processing systems}, 24, 2011.

\bibitem[Henze(1988)]{henze1988multivariate}
Norbert Henze.
\newblock A multivariate two-sample test based on the number of nearest
  neighbor type coincidences.
\newblock \emph{The Annals of Statistics}, 16\penalty0 (2):\penalty0 772--783,
  1988.

\bibitem[Henze and Penrose(1999)]{henze1999multivariate}
Norbert Henze and Mathew~D Penrose.
\newblock On the multivariate runs test.
\newblock \emph{Annals of statistics}, pages 290--298, 1999.

\bibitem[Hu et~al.(2022)Hu, Green, and Tibshirani]{hu2022voronoigram}
Addison~J Hu, Alden Green, and Ryan~J Tibshirani.
\newblock The voronoigram: Minimax estimation of bounded variation functions
  from scattered data.
\newblock \emph{arXiv preprint arXiv:2212.14514}, 2022.

\bibitem[H{\"u}tter and Rigollet(2016)]{hutter2016optimal}
Jan-Christian H{\"u}tter and Philippe Rigollet.
\newblock Optimal rates for total variation denoising.
\newblock In \emph{Conference on Learning Theory}, pages 1115--1146. PMLR,
  2016.

\bibitem[Ingster(1987)]{ingster1987minimax}
Yu~I Ingster.
\newblock Minimax testing of nonparametric hypotheses on a distribution density
  in the l\_p metrics.
\newblock \emph{Theory of Probability \& Its Applications}, 31\penalty0
  (2):\penalty0 333--337, 1987.

\bibitem[Ingster and Suslina(2003)]{ingster2003nonparametric}
Yuri Ingster and Irena Suslina.
\newblock \emph{Nonparametric goodness-of-fit testing under Gaussian models},
  volume 169.
\newblock Springer Science \& Business Media, 2003.

\bibitem[Janssen(2000)]{janssen2000global}
Arnold Janssen.
\newblock Global power functions of goodness of fit tests.
\newblock \emph{The Annals of Statistics}, 28\penalty0 (1):\penalty0 239--253,
  2000.

\bibitem[Jitkrittum et~al.(2017)Jitkrittum, Xu, Szab{\'o}, Fukumizu, and
  Gretton]{jitkrittum2017linear}
Wittawat Jitkrittum, Wenkai Xu, Zolt{\'a}n Szab{\'o}, Kenji Fukumizu, and
  Arthur Gretton.
\newblock A linear-time kernel goodness-of-fit test.
\newblock \emph{Advances in Neural Information Processing Systems}, 30, 2017.

\bibitem[Kannan et~al.(2004)Kannan, Vempala, and Vetta]{kannan2004clusterings}
Ravi Kannan, Santosh Vempala, and Adrian Vetta.
\newblock On clusterings: Good, bad and spectral.
\newblock \emph{Journal of the ACM (JACM)}, 51\penalty0 (3):\penalty0 497--515,
  2004.

\bibitem[Kantorovich(1942)]{kantorovich1942translocation}
L.V. Kantorovich.
\newblock On the translocation of masses.
\newblock \emph{C. R. (Doklady) Acad. Sci. URSS (N.S.)}, 37:\penalty0 199--201,
  1942.

\bibitem[Kim et~al.(2021)Kim, Ramdas, Singh, and
  Wasserman]{kim2021classification}
Ilmun Kim, Aaditya Ramdas, Aarti Singh, and Larry Wasserman.
\newblock {Classification accuracy as a proxy for two-sample testing}.
\newblock \emph{The Annals of Statistics}, 49\penalty0 (1):\penalty0 411 --
  434, 2021.

\bibitem[Kim et~al.(2022)Kim, Balakrishnan, and Wasserman]{kim2022minimax}
Ilmun Kim, Sivaraman Balakrishnan, and Larry Wasserman.
\newblock Minimax optimality of permutation tests.
\newblock \emph{The Annals of Statistics}, 50\penalty0 (1):\penalty0 225--251,
  2022.

\bibitem[Koenker et~al.(1994)Koenker, Ng, and Portnoy]{koenker1994quantile}
Roger Koenker, Pin Ng, and Stephen Portnoy.
\newblock Quantile smoothing splines.
\newblock \emph{Biometrika}, 81\penalty0 (4):\penalty0 673--680, 1994.

\bibitem[Kolmogorov(1933)]{kolmogorov1933sulla}
A.~N. Kolmogorov.
\newblock Sulla determinazione empirica di una legge di distribuzione.
\newblock \emph{Giornale dell'Istituto Italiano degli Attuari}, 4:\penalty0
  83--91, 1933.

\bibitem[Kolmogorov et~al.(2007)Kolmogorov, Boykov, and
  Rother]{kolmogorov2007applications}
Vladimir Kolmogorov, Yuri Boykov, and Carsten Rother.
\newblock Applications of parametric maxflow in computer vision.
\newblock In \emph{2007 IEEE 11th International Conference on Computer Vision},
  pages 1--8, 2007.

\bibitem[Koltchinskii and Gine(2000)]{koltchinskii2000random}
Vladimir Koltchinskii and Evarist Gine.
\newblock Random matrix approximation of spectra of integral operators.
\newblock \emph{Bernoulli}, 6\penalty0 (1):\penalty0 113--167, 02 2000.

\bibitem[Lehmann and Romano(2005)]{lehmann2005testing}
Erich~Leo Lehmann and Joseph~P Romano.
\newblock \emph{Testing statistical hypotheses}, volume~3.
\newblock Springer, 2005.

\bibitem[Leoni(2017)]{leoni2017first}
Giovanni Leoni.
\newblock \emph{A first course in Sobolev spaces}.
\newblock American Mathematical Soc., 2017.

\bibitem[Lepski and Spokoiny(1999)]{lepski1999minimax}
Oleg~V Lepski and Vladimir~G Spokoiny.
\newblock Minimax nonparametric hypothesis testing: the case of an
  inhomogeneous alternative.
\newblock \emph{Bernoulli}, pages 333--358, 1999.

\bibitem[Liu and Singh(1993)]{liu1993quality}
Regina~Y Liu and Kesar Singh.
\newblock A quality index based on data depth and multivariate rank tests.
\newblock \emph{Journal of the American Statistical Association}, 88\penalty0
  (421):\penalty0 252--260, 1993.

\bibitem[Lopez-Paz and Oquab(2017)]{lopez-paz2017revisiting}
David Lopez-Paz and Maxime Oquab.
\newblock Revisiting classifier two-sample tests.
\newblock In \emph{International Conference on Learning Representations}, 2017.

\bibitem[Madrid~Padilla et~al.(2020)Madrid~Padilla, Sharpnack, Chen, and
  Witten]{padilla2020adaptive}
Oscar~Hernan Madrid~Padilla, James Sharpnack, Yanzhen Chen, and Daniela~M
  Witten.
\newblock Adaptive nonparametric regression with the k-nearest neighbour fused
  lasso.
\newblock \emph{Biometrika}, 107\penalty0 (2):\penalty0 293--310, 2020.

\bibitem[Mammen and Van De~Geer(1997)]{mammen1997locally}
Enno Mammen and Sara Van De~Geer.
\newblock Locally adaptive regression splines.
\newblock \emph{The Annals of Statistics}, 25\penalty0 (1):\penalty0 387--413,
  1997.

\bibitem[Meyers and Ziemer(1977)]{meyers1977integral}
Norman~G Meyers and William~P Ziemer.
\newblock Integral inequalities of poincar{\'e} and wirtinger type for bv
  functions.
\newblock \emph{American Journal of Mathematics}, pages 1345--1360, 1977.

\bibitem[M{\"u}ller(1997)]{muller1997integral}
Alfred M{\"u}ller.
\newblock Integral probability metrics and their generating classes of
  functions.
\newblock \emph{Advances in Applied Probability}, 29\penalty0 (2):\penalty0
  429--443, 1997.

\bibitem[Paik et~al.(2023)Paik, Celentano, Green, and
  Tibshirani]{paik2023maximum}
Seunghoon Paik, Michael Celentano, Alden Green, and Ryan~J Tibshirani.
\newblock Maximum mean discrepancy meets neural networks: The
  radon-kolmogorov-smirnov test.
\newblock \emph{arXiv preprint arXiv:2309.02422}, 2023.

\bibitem[Phuc and Torres(2015)]{phuc2015characterizations}
Nguyen~Cong Phuc and Monica Torres.
\newblock Characterizations of signed measures in the dual of $ bv $ and
  related isometric isomorphisms.
\newblock \emph{arXiv preprint arXiv:1503.06208}, 2015.

\bibitem[Rosenbaum(2005)]{rosenbaum2005exact}
Paul~R Rosenbaum.
\newblock An exact distribution-free test comparing two multivariate
  distributions based on adjacency.
\newblock \emph{Journal of the Royal Statistical Society: Series B (Statistical
  Methodology)}, 67\penalty0 (4):\penalty0 515--530, 2005.

\bibitem[Rudin et~al.(1992)Rudin, Osher, and Fatemi]{rudin1992nonlinear}
Leonid~I Rudin, Stanley Osher, and Emad Fatemi.
\newblock Nonlinear total variation based noise removal algorithms.
\newblock \emph{Physica D: nonlinear phenomena}, 60\penalty0 (1-4):\penalty0
  259--268, 1992.

\bibitem[Schilling(1986)]{schilling1986multivariate}
Mark~F Schilling.
\newblock Multivariate two-sample tests based on nearest neighbors.
\newblock \emph{Journal of the American Statistical Association}, 81\penalty0
  (395):\penalty0 799--806, 1986.

\bibitem[Smirnov(1948)]{smirnov1948table}
Nickolay Smirnov.
\newblock Table for estimating the goodness of fit of empirical distributions.
\newblock \emph{The annals of mathematical statistics}, 19\penalty0
  (2):\penalty0 279--281, 1948.

\bibitem[Tibshirani(2014)]{tibshirani2014adaptive}
Ryan~J. Tibshirani.
\newblock {Adaptive piecewise polynomial estimation via trend filtering}.
\newblock \emph{The Annals of Statistics}, 42\penalty0 (1):\penalty0 285 --
  323, 2014.
\newblock \doi{10.1214/13-AOS1189}.
\newblock URL \url{https://doi.org/10.1214/13-AOS1189}.

\bibitem[Vaserstein(1969)]{vaserstein1969markov}
Leonid~Nisonovich Vaserstein.
\newblock Markov processes over denumerable products of spaces, describing
  large systems of automata.
\newblock \emph{Problemy Peredachi Informatsii}, 5\penalty0 (3):\penalty0
  64--72, 1969.

\bibitem[Vogel and Oman(1996)]{vogel1996iterative}
Curtis~R Vogel and Mary~E Oman.
\newblock Iterative methods for total variation denoising.
\newblock \emph{SIAM Journal on Scientific Computing}, 17\penalty0
  (1):\penalty0 227--238, 1996.

\bibitem[von Luxburg et~al.(2008)von Luxburg, Belkin, and
  Bousquet]{vonluxburg2008consistency}
Ulrike von Luxburg, Mikhail Belkin, and Olivier Bousquet.
\newblock Consistency of spectral clustering.
\newblock \emph{Annals of Statistics}, 36\penalty0 (2):\penalty0 555--586,
  2008.

\bibitem[von Mises(1933)]{vonmises1933wahrscheinlichkeit}
Richard von Mises.
\newblock \emph{Wahrscheinlichkeit Statistik und Wahrheit}, volume~7.
\newblock 1933.

\bibitem[Wang et~al.(2016)Wang, Sharpnack, Smola, and
  Tibshirani]{wang2016trend}
Yu-Xiang Wang, James Sharpnack, Alexander~J. Smola, and Ryan~J. Tibshirani.
\newblock Trend filtering on graphs.
\newblock \emph{Journal of Machine Learning Research}, 17\penalty0
  (105):\penalty0 1--41, 2016.

\bibitem[Zhu et~al.(2003)Zhu, Ghahramani, and Lafferty]{zhu2003semi}
Xiaojin Zhu, Zoubin Ghahramani, and John~D Lafferty.
\newblock Semi-supervised learning using gaussian fields and harmonic
  functions.
\newblock In \emph{Proceedings of the 20th International conference on Machine
  learning (ICML-03)}, pages 912--919, 2003.

\end{thebibliography}

\appendix

\section{Proof of Theorem~\ref{thm:graph-tv}}
\label{sec:pf-graph-tv}
For convenience take $\lambda = \frac{n_1}{n}$. In this section we complete the proof of Theorem~\ref{thm:graph-tv} by establishing that both inequalities in~\eqref{pf:graph-tv-0} hold for the threshold
$$
t_{n_1,n_2} := \frac{C_3}{n \log n\min(\lambda, 1 - \lambda)} \times 
\begin{dcases}
	\sqrt{\log n}, & \quad \textrm{$d = 2$} \\
	1, & \quad \textrm{$d \geq 3$.}
\end{dcases}
$$
Sections~\ref{subsec:pf-graph-tv-ub}-\ref{subsec:step-3} establish the claimed upper bound on the permutation threshold $t_{\DTV}(\mc{X}_{n_1}, \mc{Y}_{n_2})$. Sections~\ref{subsec:pf-graph-tv-lb}-\ref{subsec:pf-representation-population-tv-ipm-2} establish the claimed lower bound on the $\varepsilon_n$-graph TV IPM $d_{\DTV}(\mc{X}_{n_1}, \mc{Y}_{n_2})$.

\subsection{Upper bound on permutation critical value: proof outline}
\label{subsec:pf-graph-tv-ub}
We begin by writing $t_{\DTV}(\mc{X}_{n_1},\mc{Y}_{n_2})$ as the $(1 - \alpha)$th quantile of an empirical process:
\begin{equation}
	\label{eqn:empirical-process}
	t_{\DTV}(\mc{X}_{n_1}, \mc{Y}_{n_2}) = \inf\biggl\{t: \mathbb{P}_{\Pi \sim \mathrm{Unif}(S_n)}\Bigl(\max\Bigl\{\sum_{i = 1}^{n} \theta_i a_{\Pi(i)}: \|D_{\varepsilon_n}\theta\|_1 \leq 1\Bigr\} \leq t|\mc{Z}_n\Bigr) \geq 1 - \alpha \biggr\}.
\end{equation}
In the probability in~\eqref{eqn:empirical-process}, all that is random is $\Pi \sim \mathrm{Unif}(S_n)$, which is a randomly chosen permutation of the assignment vector $a_i$ that is independent of $\mc{X}_{n_1}$ and $\mc{Y}_{n_2}$. Our strategy will be to exhibit a set $Z^n$  such that for every $(P,Q) \in \mc{P}^{\infty}(d)$,
\begin{equation}
\label{pf:graph-tv-ub-1}
\begin{aligned}
	& \mathbb{P}_{P,Q}(\mc{Z}_n \in Z^n) \geq 1 - \frac{\beta}{2}, \quad \textrm{and} \\
	& \mathbb{P}_{\Pi \sim \mathrm{Unif}(S_n)}\biggl(\max\Bigl\{\frac{1}{n}\sum_{i = 1}^{n} \theta_i a_{\Pi(i)}: \|D_{\varepsilon_n}\theta\|_1 \leq 1\Bigr\} \leq t_{n_1,n_2}|\mc{Z}_n\biggr) \geq 1 - \alpha ~~ \textrm{for every $\mc{Z}_n \in Z^n$.}
\end{aligned}
\end{equation}
The second statement in~\eqref{pf:graph-tv-ub-1} implies that $t_{\DTV}(\cX_{n_1},\cY_{n_2}) \leq t_{n_1,n_2}$ whenever $\mc{Z}_n \in Z^n$. The first statement in~\eqref{pf:graph-tv-ub-1} implies that this event occurs with probability at least $1 - \beta/2$, which is the claim of~\eqref{pf:graph-tv-0}.

In what follows, recall the data-independent piecewise cubic partition $\Xi_{\varepsilon}$ introduced in Section~\ref{subsec:spatially-localized-alternatives-graphtvipm}. The proof of~\eqref{pf:graph-tv-ub-1} will use the partition $\Xi_{\varepsilon_0}$ where $\varepsilon_0 = \frac{\varepsilon}{2\sqrt{d}}$. To reduce notational overhead, we will use the abbreviation $\Xi = \Xi_{\varepsilon_0}$ in the rest of the proof. The proof of~\eqref{pf:graph-tv-ub-1} is long and we begin with a high-level outline. 
\begin{enumerate}
	\item We approximate $\theta \in \Rn$ by averaging over cubes $\Delta \in \Xi$. To that end we establish some deterministic upper bounds on (i) the approximation error of $\theta - P_{\Xi}\theta$ and (ii) a grid-based discrete total variation (grid TV) of $P^{\Xi}\theta$. This idea is inspired by~\citet{padilla2020adaptive}.
	\item We apply the estimates of Step 1 to upper bound the permutation empirical process in~\eqref{eqn:empirical-process}. Here we extend known upper bounds on the Gaussian complexity of a constraint set involving the TV of a grid graph~\citep{hutter2016optimal} to apply to (what we refer to as) a \emph{permutation complexity} that is the relevant functional in our case. To bound this permutation complexity we rely on a Bernstein-type inequality for randomly permuted sums due to~\citet{albert2019concentration}.
	\item The upper bounds in Step 2 are tight so long as $\mc{Z}_n$ is spread out over $\Omega$, in the sense that each cube $\Delta \in \Xi$ contains sufficiently many points and no cube contains too many points. We conclude the proof by showing that when $\mc{X}_{n_1},\mc{Y}_{n_2}$ are randomly sampled from $(P,Q) \in \mc{P}^{\infty}(d)$ this happens with high probability.
\end{enumerate}

\subsection{Step 1: Piecewise constant approximation and associated estimates}
\label{subsec:piecewise-constant-approximation}
As mentioned, to upper bound~\eqref{eqn:empirical-process} we take a piecewise constant approximation of each $\theta \in \Rn$. In this section we define this approximation scheme and give some upper bounds on approximation error and discrete TV of the approximant.

\paragraph{Piecewise constant approximation.}
Let $N_0 = \floor{1/\varepsilon_0}$ and $n_0 := N_0^d$. For each cube $\Delta \in \Xi$ let $n(\Delta)$ be the number of points in $\mc{Z}_n$ that fall in $\Delta$, and define
$$n_{\min}(\Xi) := \min_{\Delta \in \Xi} n(\Delta), \quad n_{\max}(\Xi) := \max_{\Delta \in \Xi} n(\Delta).$$
Assume in what follows that $n_{\min}(\Xi)$ is strictly positive. [The choice of $\varepsilon = \varepsilon_n$ and $\varepsilon_0 = \frac{\varepsilon}{2\sqrt{d}}$ will mean that this is true with high probability so long as $Z_{1},\ldots,Z_n$ are independent draws from pairs $(P,Q) \in \mc{P}_{\infty}(\Omega)$, see Section~\ref{subsec:step-3}.]  

Define two approximants $P_{\Xi}\theta, P^{\Xi}\theta$ by averaging $\theta \in \Reals^n$ over each cube in the partition:
$$
P_{\Delta}\theta := \frac{1}{n(\Delta)}\sum_{i = 1}^{n} \theta_i \1\{Z_i \in \Delta\}, \quad (P^{\Xi}\theta)_\Delta := P_{\Delta}\theta, \quad (P_{\Xi}\theta)_i := \sum_{\Delta \in \Xi} (\P_{\Delta}\theta)  \1\{Z_i \in \Delta\}.
$$
The difference between $P_{\Xi}\theta$ and $P^{\Xi}\theta$ is that the former is a vector in $\Reals^n$ whereas the latter is a vector in $\R^{\Xi}$.

\paragraph{Upper bound on approximation error.}
We provide a bound on the $\ell^1$ approximation error $\|\theta - P_{\Xi}\theta\|_1$ in terms of the $\varepsilon$-graph TV that holds whenever $\varepsilon \geq \sqrt{d}\varepsilon_0$. For a given $Z_i \in \Delta$, we have
\begin{align*}
	|\theta_i - P_{\Delta} \theta| & = \biggl|\frac{1}{n(\Delta)}\sum_{j = 1}^{n} (\theta_i - \theta_j) \1\{Z_j \in \Delta\}\biggr| \\
	& \leq\frac{1}{n(\Delta)}\sum_{j = 1}^{n} |\theta_i - \theta_j| \1\{Z_j \in \Delta\} \\
	& \leq \frac{1}{n(\Delta)}\sum_{j = 1}^{n} |\theta_i - \theta_j| \1\{\|Z_i - Z_j\|_2 \leq \varepsilon\}, \tag{$\varepsilon \geq \sqrt{d}\varepsilon_0$}
\end{align*}
Summing over all $i = 1,\ldots,n$ gives
\begin{equation}
	\label{pf:graph-tv-typeI-step2-1}
	\|\theta - P_{\Xi}\theta\|_1 \leq \frac{1}{n_{\min}(\Xi)} \sum_{i,j = 1}^{n} |\theta_i - \theta_j| \1\{\|Z_i - Z_j\|_2 \leq \varepsilon\} = \frac{\DTV_{\varepsilon}(\theta)}{n_{\min}(\Xi)}.
\end{equation}

\paragraph{Upper bound on grid discrete TV.}
The partition $\Xi$ induces an unweighted undirected geometric graph $G_{\Xi} = (\Xi,\ttE_{\Xi})$ isomorphic to the $d$-dimensional grid graph. A given pair $\{\Delta,\Delta'\} \in \ttE_{\Xi}$ if and only if $\Delta = \Delta_j, \Delta' = \Delta_{j \pm e_i}$ for some $i = 1,\ldots,d$. The associated grid discrete TV is
$$
\DTV_{\Xi}(\gamma) := \sum_{\Delta,\Delta'} |\gamma_{\Delta} - \gamma_{\Delta'}| \times \1\bigl(\{\Delta,\Delta'\} \in \ttE_{\Xi}\bigr).
$$
Consider now the grid discrete TV of $P^{\Xi}\theta$. For any $\Delta \sim \Delta'$ in $G_{\Xi}$,
\begin{align*}
	|P_{\Delta}\theta - P_{\Delta'}\theta| & = \Bigl|\frac{1}{n(\Delta)}\sum_{i = 1}^n\theta_i \1(Z_i \in \Delta) - \frac{1}{n(\Delta')}\sum_{j = 1}^{n} \theta_j \1(Z_j \in \Delta') \Bigr| \\
	& \leq \frac{1}{n(\Delta) n(\Delta')}\sum_{i,j = 1}^{n} |\theta_i - \theta_j| \times \1(Z_i \in \Delta, Z_j \in \Delta').
\end{align*}
Now use the fact that $\varepsilon = 2\sqrt{d}\varepsilon_0$ so that if $Z_i,Z_j$ belong to adjoining grid cells then $\|Z_i - Z_j\|_2 \leq \varepsilon$, and so there is an edge between them in the $\varepsilon$-graph. Then summing over all $\Delta \sim \Delta'$ gives
\begin{equation}
	\label{pf:graph-tv-typeI-step2-3}
	\DTV_{\Xi}(P^{\Xi}\theta) \leq \frac{1}{\big[n_{\min}(\Xi)\bigr]^2}\sum_{i,j =1}^{n} |\theta_i - \theta_j| \times \1(\|Z_i - Z_j\|_2 \leq \varepsilon) = \frac{1}{\bigl[n_{\min}(\Xi)\bigr]^2} \DTV_{\varepsilon}(\theta).
\end{equation}

\subsection{Step 2: Upper bound on empirical process}
Let $\xi = n \times a_{\Pi}$ and $W_{\Delta} := \sum_{Z_i \in \Delta} \xi_i$ for each $\Delta \in \Xi$. For a given $\theta \in \Reals^n$ we have the decomposition,
\begin{equation*}
	\begin{aligned}
		\frac{1}{n}\sum_{i = 1}^{n} \theta_i \xi_i
		& = \frac{1}{n}\sum_{i = 1}^{n} (P_{\Xi}\theta)_i \xi_i + \frac{1}{n}\sum_{i = 1}^{n}\bigl(\theta_i - (P_{\Xi}\theta)_i\bigr)  \xi_i \\
		& = \frac{1}{n}\sum_{\Delta \in \Xi} (P^{\Xi}\theta)_\Delta W_\Delta + \frac{1}{n}\sum_{i = 1}^{n}\bigl(\theta_i - (P_{\Xi}\theta)_i\bigr) \xi_i \quad \textrm{(by the piecewise constant structure of $P_{\Xi}\theta$)}.
	\end{aligned}
\end{equation*}
Using the deterministic results of Step 1, we give separate high probability upper bounds on the two terms in the decomposition, which we call the ``main term'' and ``truncation term''. These results then imply an upper bound on the permutation process of~\eqref{eqn:empirical-process}.

Throughout this section \emph{all probabilistic statements are conditional on $\mc{Z}_{n}$}, and quantify only the randomness in $\xi_i, i = 1,\ldots,n$ due to the randomly chosen permutation $\Pi \sim \mathrm{Unif}(S_n)$. Deriving an upper bound on the truncation term is simple and we begin with this.

\paragraph{Truncation term.}
By H\"{o}lder's inequality and then the upper bound on approximation error in~\eqref{pf:graph-tv-typeI-step2-1},
\begin{equation}
	\label{pf:graph-tv-typeI-step2-2}
	\begin{aligned}
		\frac{1}{n} \sum_{i = 1}^{n}(\theta_i - P_{\Xi}\theta(Z_i)) \xi_i 
		& \leq \frac{\max_{i = 1,\ldots,n}|\xi_i|}{n} \sum_{i = 1}^{n}|\theta_i - (P_{\Xi}\theta)_i|  \\
		& \leq \frac{1}{\min(\lambda, 1 - \lambda) n} \times \frac{\DTV_{\varepsilon}(\theta)}{n_{\min}(\Xi)}.
	\end{aligned}
\end{equation}

\paragraph{Review: upper bound on Gaussian complexity of unit grid TV ball.}
Recall that $G_{\Xi}$ is a $d$-dimensional grid graph with $n_0$ vertices. The following upper bound on the Gaussian complexity of mean-zero vectors in the unit grid TV ball is due to~\citet{hutter2016optimal}: in the notation of this paper, for independent Normal random variables $w_\Delta \sim N(0,\sigma^2), \Delta \in \Xi$,
\begin{equation*}
	\sup\Bigl\{\sum_{\Delta \in \Xi} \gamma_{\Delta} w_\Delta: \bar{\gamma} = 0, \DTV_{\Xi}(\gamma) \leq 1 \Bigr\} = 
	\begin{dcases}
		O_{P}\bigl(\sigma \log n_0 \bigr), & \quad \textrm{$d = 2$} \\
		O_{P}\bigl(\sigma \sqrt{\log n_0}\bigr), & \quad \textrm{$d \geq 3$.}
	\end{dcases}
\end{equation*}
To derive this result~\citet{hutter2016optimal} rely on asymptotic properties of the eigenvalues $\lambda_k$ and eigenvectors $u_k$ of the grid Laplacian $L_{\Xi} = D_{\Xi}^{\top} D_{\Xi}$ and the left singular vectors of $D_{\Xi}$. Specifically, for a given triplet $(\lambda_k,u_k,v_k), k = 2,\ldots,n_0$, there exist positive constants $c,C$ depending only on $d$ (and not on $n_0$) for which
$$
\lambda_k \geq c \biggl(\frac{k}{n_0}\biggr)^{2/d}, \quad \|v_k\|_{\infty},\|u_k\|_{\infty} \leq \sqrt{\frac{C}{n_0}}, \quad \textrm{for all $k = 2,\ldots,n_0$.}
$$

\paragraph{Main term: permutation complexity of unit grid TV ball.}
Lemma~\ref{lem:grid-binomial-complexity} shows that a similar upper bound holds with respect to a functional that we call ``permutation complexity'', in which the Gaussian random variables $w_{\Delta}$ are replaced by the randomly permuted sums $W_{\Delta}$. To see the correspondence between the upper bound in the permutation case and the (previously known) upper bound in the Gaussian case, examine the second term below and note that $(2\min(\lambda,1-\lambda))^{-1/2}n_{\max}(\Xi)$ is an upper bound on $\sqrt{\Var(W_{\Delta})}$.
\begin{lemma}
	\label{lem:grid-binomial-complexity}
	There exists a constant $C$ depending only on $d$ such that with probability at least $1 - \delta$,
	$$
	\sup\Bigl\{\sum_{\Delta \in \Xi} \gamma_{\Delta} W_\Delta: \DTV_{\Xi}(\gamma) \leq 1 \Bigr\} 
	\leq 
	\frac{C \log(n_0/\delta)}{\min(\lambda,1 - \lambda)} + \frac{C}{\sqrt{\min(\lambda,1 - \lambda)}} \cdot
	\begin{dcases}
		\sqrt{n_{\max}(\Xi)} \log(n_0/\delta),& \quad \textrm{$d = 2$} \\
		\sqrt{n_{\max}(\Xi)} \sqrt{\log(n_0/\delta)},& \quad \textrm{$d \geq 3$.} 
	\end{dcases}
	$$
\end{lemma}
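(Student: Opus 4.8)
The plan is to follow the argument of \citet{hutter2016optimal} for bounding the Gaussian complexity of the unit grid TV ball, substituting a concentration inequality for randomly permuted sums in place of their Gaussian tail bounds. First we would reduce the supremum to an edge-wise maximum. Since $\sum_{\Delta \in \Xi} W_\Delta = \sum_{i=1}^n \xi_i = 0$, restricting to $\gamma$ with $\bar\gamma = 0$ changes neither the objective nor $\DTV_\Xi(\gamma)$, so we may assume $\bar\gamma = 0$; then $\gamma = D_\Xi^{+} D_\Xi \gamma$ for $D_\Xi^{+}$ the pseudoinverse of the grid incidence matrix, and Hölder's inequality gives
\begin{equation*}
\sum_{\Delta \in \Xi} \gamma_\Delta W_\Delta = \big\langle D_\Xi \gamma, (D_\Xi^{+})^{\top} W\big\rangle \leq \|D_\Xi \gamma\|_1 \,\big\|(D_\Xi^{+})^{\top} W\big\|_\infty = \DTV_\Xi(\gamma)\,\big\|(D_\Xi^{+})^{\top} W\big\|_\infty.
\end{equation*}
It therefore suffices to bound $\|(D_\Xi^{+})^{\top} W\|_\infty = \max_\ell |\langle D_\Xi^{+} e_\ell, W\rangle|$, the maximum over the $O(n_0)$ edges of the grid.

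Next we would control a single edge. Fixing $\ell$ and writing $b := D_\Xi^{+} e_\ell \in \mathbb{R}^{\Xi}$, the identity $W_\Delta = \sum_{Z_i \in \Delta}\xi_i$ gives $\langle b, W\rangle = \sum_{i=1}^n \xi_i\, b_{\Delta(i)}$, where $\Delta(i)$ is the cube containing $Z_i$; since $\xi = n\,a_\Pi$ is a uniformly random permutation of the fixed vector $\xi^0$ having $n_1$ entries $1/\lambda$ and $n_2$ entries $-1/(1-\lambda)$, this is a randomly permuted sum with conditional mean zero given $\mc{Z}_n$. We would then apply the Bernstein-type inequality of \citet{albert2019concentration}: for these rank-one weights its variance proxy is of order $\big(\tfrac1n\sum_j (\xi^0_j)^2\big)\sum_i b_{\Delta(i)}^2 = \tfrac1{\lambda(1-\lambda)}\sum_i b_{\Delta(i)}^2$ and its envelope of order $\|\xi^0\|_\infty \max_i |b_{\Delta(i)}| = \tfrac1{\min(\lambda,1-\lambda)}\|b\|_\infty$. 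Bounding $\sum_i b_{\Delta(i)}^2 = \sum_\Delta n(\Delta)\,b_\Delta^2 \leq n_{\max}(\Xi)\,\|b\|_2^2$, using $\lambda(1-\lambda)\geq \tfrac12\min(\lambda,1-\lambda)$, and taking a union bound over the $O(n_0)$ edges yields, with probability at least $1 - \delta$,
\begin{equation*}
\big\|(D_\Xi^{+})^{\top} W\big\|_\infty \lesssim \frac{\max_\ell\|D_\Xi^{+}e_\ell\|_\infty}{\min(\lambda,1-\lambda)}\,\log(n_0/\delta) + \sqrt{\frac{n_{\max}(\Xi)}{\min(\lambda,1-\lambda)}}\;\max_\ell\|D_\Xi^{+}e_\ell\|_2\;\sqrt{\log(n_0/\delta)}.
\end{equation*}

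Finally we would supply the spectral estimates and assemble. Writing the SVD $D_\Xi = \sum_{k\geq 2}\sqrt{\lambda_k}\,u_k v_k^{\top}$ gives $D_\Xi^{+}e_\ell = \sum_{k\geq 2}\lambda_k^{-1/2}(u_k)_\ell\,v_k$, hence $\|D_\Xi^{+}e_\ell\|_2^2 = \sum_{k\geq2}(u_k)_\ell^2/\lambda_k$ and $\|D_\Xi^{+}e_\ell\|_\infty \leq \sum_{k\geq2}|(u_k)_\ell|\,\|v_k\|_\infty/\sqrt{\lambda_k}$. Plugging in the quoted grid estimates $\lambda_k \geq c(k/n_0)^{2/d}$ and $\|u_k\|_\infty,\|v_k\|_\infty \leq \sqrt{C/n_0}$ gives $\|D_\Xi^{+}e_\ell\|_2^2 \lesssim n_0^{2/d-1}\sum_{k\geq2}k^{-2/d}$, which is $\lesssim \log n_0$ for $d=2$ and $\lesssim 1$ for $d\geq3$ (this is exactly the estimate underlying \citet{hutter2016optimal}'s Gaussian-complexity bound), while $\|D_\Xi^{+}e_\ell\|_\infty \lesssim n_0^{1/d-1}\sum_{k\geq2}k^{-1/d} \lesssim 1$ for every $d\geq 2$. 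Substituting these, and in the case $d=2$ using $\sqrt{\log n_0\cdot\log(n_0/\delta)}\leq\log(n_0/\delta)$ (valid for $\delta\leq1$), recovers exactly the two terms stated in the lemma.

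The genuinely delicate step is the second one: it needs the precise form of \citet{albert2019concentration}'s inequality in this rank-one regime — in particular the correct variance and envelope parameters and a two-sided tail — in order to conclude that a permuted sum behaves like a bounded sub-exponential variable; one also has to track the (mild) effect of centering $b$ by its mean in both parameters. The first step is the standard Hütter–Rigollet duality reduction, and the third consists of the eigenvalue/eigenvector computations already carried out in \citet{hutter2016optimal}, with the one new ingredient being the $\ell_\infty$ bound on columns of $D_\Xi^{+}$, which follows from the same estimates. A minor point throughout is bookkeeping around the conditioning on $\mc{Z}_n$: all randomness lives in $\Pi$, and the data enter the bound only through the cube counts $n(\Delta)$, hence only through $n_{\max}(\Xi)$.
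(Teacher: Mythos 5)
Your proposal follows the paper's argument essentially step for step: the same Hütter–Rigollet duality reduction $\sum_{\Delta}\gamma_\Delta W_\Delta \le \DTV_\Xi(\gamma)\,\|(D_\Xi^+)^\top W\|_\infty$, the same application of the Bernstein-type inequality for permuted sums of \citet{albert2019concentration} to each edge coordinate (the paper computes the variance proxy by explicitly splitting into marginal variances and pairwise covariances, which is just the unrolled form of the closed-form expression you invoke, and the centering issue you flag is handled there implicitly), the same envelope bound via $\|u_k\|_\infty,\|v_k\|_\infty \lesssim n_0^{-1/2}$ and $\lambda_k \gtrsim (k/n_0)^{2/d}$, and the same union bound over the $O(n_0)$ edges. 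The proof is correct and takes the same route.
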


\emph{Proof of Lemma 1.} To begin note that $W \in \row(D_{\Xi})$ as $\sum_{\Delta} W_{\Delta} = \sum_{i = 1}^{n} \xi_i = 0$. Therefore $W = D_{\Xi}^{\top} (D_{\Xi}^{\top})^{\dagger} W$ where $(D_{\Xi}^{\top})^{\dagger}$ is the Moore-Penrose pseudoinverse of $D_{\Xi}^{\top}$. 
It follows from Holder's inequality that
$$
\sum_{\Delta \in \Xi} \gamma_{\Delta} W_{\Delta} = \dotp{\gamma}{W}_2 = \dotp{D_{\Xi}\gamma}{(D_{\Xi}^{\top})^{\dagger}W}_2 \leq \|(D_{\Xi}^{\top})^{\dagger}W\|_{\infty}.
$$
Writing $(D_{\Xi}^{\top})^{\dagger} = \sum_{k = 2}^{n_0} \frac{u_k v_k^{\top}}{\sqrt{\lambda_k}}$ in terms of its singular value decomposition, the quantity we are interested in upper bounding is
$$
\|(D_{\Xi}^{\top})^{\dagger}W\|_{\infty} = \max_{j = 1,\ldots,|E_{\Xi}|} \biggl|\sum_{k = 2}^{n_0} \frac{u_{k,j}}{\sqrt{\lambda_k}} \dotp{v_k}{W}_2 \biggr| = \max_{j = 1,\ldots,|E_{\Xi}|} \biggl|\sum_{i = 1}^{n} \xi_i \sum_{k = 2}^{n_0} \frac{u_{k,j} v_{k,i}}{\sqrt{\lambda_k}}\biggr| =: \max_{j = 1,\ldots,|E_{\Xi}|} \biggl|\sum_{i = 1}^{n} U_{ij}\biggr|
$$
where $U_{ij} = \sum_{i = 1}^{n} \xi_i \sum_{k = 2}^{n_0} \frac{u_{k,j} v_{k,i}}{\sqrt{\lambda_k}}$, and in a slight abuse we have defined $v_{k,i} := (v_{k})_{\Delta}$ for each $Z_i \in \Delta$. 

Now we upper bound $|\sum_{i = 1}^{n}U_{ij}|$, using a Bernstein's inequality for randomly permuted sums due to~\citet{albert2019concentration}. For ease of reference, this inequality is recorded in~\eqref{eqn:bernstein-permuted} in Section~\ref{subsec:concentration}. To apply the result, note that for any $j = 1,\ldots,E_{\Xi}$:

(i) $\mathbb{E}[U_{ij}] = 0$ for $i = 1,\ldots,n$; \\
(ii) The variance of the sum can be decomposed into sum of variances and covariances:
$$
\mathrm{Var}\Bigl[\sum_{i = 1}^{n}U_{ij}\Bigr] = \sum_{i = 1}^{n} \mathrm{Var}\Bigl[U_{ij}\Bigr] + \sum_{i_1 \neq i_2}^{n} \Cov\Bigl[U_{i_1,j}, U_{i_2,j}\Bigr].
$$
(iii) The sum of the variances satisfies the upper bound
\begin{align*}
	\sum_{i = 1}^{n}\mathrm{Var}\Bigl[U_{ij}\Bigr] 
	& = \frac{2}{\min(\lambda,1 - \lambda)} \sum_{i = 1}^{n} \Bigl(\sum_{k = 2}^{n_0}\frac{v_{k,i} u_{k,j}}{\sqrt{\lambda_k}} \Bigr)^2 \tag{since $\Var[\xi_i] \leq \frac{2}{\min(\lambda,1 - \lambda)}$} \\
	& = \frac{2}{\min(\lambda,1 - \lambda)} \sum_{\Delta \in \Xi} n(\Delta) \Bigl(\sum_{k = 2}^{n_0}\frac{v_{k,\Delta} u_{k,j}}{\sqrt{\lambda_k}} \Bigr)^2 \\
	& \leq \frac{2 \times n_{\max}(\Xi)}{\min(\lambda,1 - \lambda)} \sum_{\Delta \in \Xi} \Bigl(\sum_{k = 2}^{n_0}\frac{v_{k,\Delta} u_{k,j}}{\sqrt{\lambda_k}} \Bigr)^2 \\
	& = \frac{2 \times n_{\max}(\Xi)}{\min(\lambda,1 - \lambda)} \sum_{k = 2}^{n_0}\frac{u_{k,j}^2}{\lambda_k} \tag{using the $\ell^2$ orthogonality of $v$} \\
	& \leq \frac{C \times n_{\max}(\Xi)}{n_0\min(\lambda,1 - \lambda)} \sum_{k = 2}^{n_0}\frac{1}{\lambda_k} \tag{using $\|u_k\|_{\infty} \leq \sqrt{\frac{C}{n_0}}$} \\
	& \leq \frac{C \times n_{\max}(\Xi)}{n_0^{(1 - 2/d)}\min(\lambda,1 - \lambda)} \sum_{k = 2}^{n_0}\frac{1}{k^{2/d}} \tag{using $\lambda_k \geq c(k/n_0)^{2/d}$} \\
	& \leq \frac{C \times n_{\max}(\Xi)}{\min(\lambda,1 - \lambda)} \times
	\begin{dcases}
		\log(n_0), & \quad \textrm{$d = 2$} \\
		1, & \quad \textrm{$d \geq 3$.}
	\end{dcases}
\end{align*}
(iv) The sum of the covariances satisfies the upper bound
\begin{align*}
	\sum_{i_1 \neq i_2}^{n} \Cov\Bigl[U_{i_1,j}, U_{i_2,j}\Bigr] 
	& = \sum_{i_1 \neq i_2}^{n} \biggl\{\Cov\Bigl[\xi_{i_1}, \xi_{i_2}\Bigr] \times \Bigl(\sum_{k = 2}^{n_0}\frac{v_{k,i_1} u_{k,j}}{\sqrt{\lambda_k}} \Bigr) \times \Bigl(\sum_{k = 2}^{n_0}\frac{v_{k,i_2} u_{k,j}}{\sqrt{\lambda_k}} \Bigr)\biggr\} \\
	& \leq \frac{1}{n - 1}\Bigl(\frac{1}{\lambda} + \frac{1}{1 - \lambda}\Bigr) \times \sum_{i_1 \neq i_2}^{n} \biggl\{ \Bigl|\sum_{k = 2}^{n_0} \frac{v_{k,i_1} u_{k,j}}{\sqrt{\lambda_k}} \Bigr| \times \Bigl|\sum_{k = 2}^{n_0}\frac{v_{k,i_2} u_{k,j}}{\sqrt{\lambda_k}} \Bigr|\biggr\} \tag{since $\Cov[\xi_{i_1},\xi_{i_2}] = \frac{-1}{n - 1}\Bigl(\frac{1}{\lambda} + \frac{1}{1 - \lambda}\Bigr)$}\\
	& \leq \frac{n}{n - 1}\Bigl(\frac{1}{\lambda} + \frac{1}{1 - \lambda}\Bigr) \times \sum_{i = 1}^{n} \Bigl(\sum_{k = 2}^{n_0} \frac{v_{k,i} u_{k,j}}{\sqrt{\lambda_k}} \Bigr)^2 \tag{using Cauchy-Schwarz} \\
	& \leq \frac{C \times n_{\max}(\Xi)}{\min(\lambda,1 - \lambda)} \times
	\begin{dcases}
		\log(n_0), & \quad \textrm{$d = 2$} \\
		1, & \quad \textrm{$d \geq 3$.}
	\end{dcases} \tag{same reasoning as in (iii).}
\end{align*}

(v) Each $U_{ij}$ satisfies the upper bound
\begin{align*}
	|U_{ij}| 
	& \leq \frac{1}{\min(\lambda,1 - \lambda)} \biggl| \sum_{k = 2}^{n_0} \frac{u_{k,j} v_{k,i}}{\sqrt{\lambda_k}} \biggr| \\
	& \leq \frac{C}{n_0\min(\lambda,1 - \lambda)} \sum_{k = 2}^{n_0} \frac{1}{\sqrt{\lambda_k}} \tag{using $\|u_k\|_{\infty},\|v_k\|_{\infty} \leq \sqrt{\frac{C}{n_0}}$} \\
	& \leq \frac{C}{n_0^{1 - 1/d}\min(\lambda,1 - \lambda)} \sum_{k = 2}^{n_0} \frac{1}{k^{1/d}} \tag{using $\lambda_k \geq c(k/n_0)^{2/d}$} \\
	& \leq \frac{C}{\min(\lambda,1 - \lambda)}.
\end{align*}
So we can apply Bernstein's inequality for permuted sums, recorded in~\eqref{eqn:bernstein-permuted}, with these upper bounds on variance and maximum absolute value, and with $\delta' = \delta/|E_{\Xi}| \geq \delta/(2^d n_0)$. Then taking a union bound over $j = 1,\ldots,|E_{\Xi}|$ gives the claimed result of the Lemma. \qed

\subsection{Step 3: Spread of sample points, and completing the proof of~\eqref{pf:graph-tv-ub-1}}
\label{subsec:step-3}

The bounds in Step 3 will be tight enough to established the desired result of~\eqref{pf:graph-tv-ub-1} as long as $\mc{Z}_{n}$ is spread out over $\Omega$ in the sense that both $n_{\min}(\Xi), n_{\max}(\Xi) = \Theta_P(\log n)$. We first state high probability bounds to that effect, and then complete the proof of~\eqref{pf:graph-tv-ub-1}.

\paragraph{Spread of sample points.}
Recall that $\varepsilon = C_1 (\log n/n)^{1/d}$ where $C_1 = 2\sqrt{d} \cdot (24B)^{1/d}$, and $\varepsilon_0 = \varepsilon/(2\sqrt{d}) = (24B\log n/n)^{1/d}$. By the assumption $(P,Q) \in \mc{P}^{\infty}(d)$, we have that the expected number of samples in any cube $\Delta \in \Xi$ is $O(\log n)$: specifically, for any $\Delta \in \Xi$,
$$
24 \log n = \frac{1}{B}n\varepsilon_0^d \leq \mathbb{E}[n(\Delta)] \leq B n \varepsilon_0^d = 24 B^2 \log n.
$$
Using a multiplicative Chernoff bound (see Section~\ref{subsec:concentration}) and a union bound, we have that
\begin{equation*}
\begin{aligned}
	& \Big|n(\Delta) - \E[n(\Delta)]\Big| \leq \frac{1}{2} \E[n(\Delta)] \; \textrm{for all $\Delta \in \Xi$,} \\
	& \textrm{with probability} \; \geq 1 - 2\varepsilon_0^{-d} \exp\Big(-\frac{\min_{\Delta} \E[n(\Delta)]}{12}\Big) \geq 1 - 2\varepsilon_0^{-d}\exp\Big(-2 \log n\Big) \geq 1 - \frac{2}{n} \geq 1 - \frac{\beta}{2}.
\end{aligned}
\end{equation*}
The last inequality follows from the assumption $\beta \geq 4/n$. In summary, with probability at least $1 - \beta/2$:
\begin{equation}
	\label{pf:graph-tv-typeI-step3-1}
	12 \log n \leq n_{\min}(\Xi) \leq n_{\max}(\Xi) \leq 36 B^2\log n.
\end{equation}
\paragraph{Proof of~\eqref{pf:graph-tv-ub-1}.}
Take $Z^n$ to be the set of possible $\mc{Z}_n$ for which $n_{\min}(\Xi)$ and $n_{\max}(\Xi)$ satisfy the inequalities of~\eqref{pf:graph-tv-typeI-step3-1}. We have just shown that $\mathbb{P}(\mc{Z}_n \in Z^n) \geq 1 - \beta/2$. Thus the first statement in~\eqref{pf:graph-tv-ub-1} is verified. Additionally, for any $\mc{Z}^n \in Z^n$, conditional on $\mc{Z}^n$ we have the following result when $d \geq 3$:
\begin{align*}
	& \max\Bigl\{\sum_{i = 1}^{n} \theta_i a_{\Pi(i)}: \DTV_{\varepsilon}(\theta) \leq 1 \Bigr\} \\
	& = \max\Bigl\{\frac{1}{n}\sum_{i = 1}^{n}\bigl(\theta_i - (P_{\Xi}\theta)_i \xi_i\bigr) + \frac{1}{n}\sum_{\Delta \in \Xi} (P^{\Xi}\theta)_\Delta W_\Delta: \DTV_{\varepsilon}(\theta) \leq 1 \Bigr\} \\
	& \leq \frac{1}{\min(\lambda,1 - \lambda)n} \times \frac{1}{n_{\min}(\Xi)} + \max\Bigl\{\frac{1}{n}\sum_{\Delta \in \Xi} (P^{\Xi}\theta)_\Delta W_\Delta : \DTV_{\varepsilon}(\theta) \leq 1 \Bigr\} \tag{by~\eqref{pf:graph-tv-typeI-step2-2}} \\
	& \leq \frac{1}{\min(\lambda,1 - \lambda)n} \times \frac{1}{n_{\min}(\Xi)} + \frac{1}{n[n_{\min}(\Xi)]^2} \times \max\Bigl\{\frac{1}{n}\sum_{\Delta \in \Xi} \gamma_\Delta W_\Delta : \DTV_{\Xi}(\gamma) \leq 1 \Bigr\} \tag{by~\eqref{pf:graph-tv-typeI-step2-3}} \\
	& \leq \frac{1}{\min(\lambda,1 - \lambda)n} \times \frac{1}{n_{\min}(\Xi)} + \frac{C}{n[n_{\min}(\Xi)]^2} \times \biggl(\frac{\log(n_0/\alpha)}{\min(\lambda,1 - \lambda)} + \sqrt{\frac{n_{\max}(\Xi) \log(n_0/\alpha)}{\min(\lambda,1 - \lambda)}}\biggr) \tag{with probability $\geq 1 - \alpha$, by Lemma~\ref{lem:grid-binomial-complexity}} \\
	& \leq \frac{C}{\min(\lambda,1 - \lambda)n \log n} + \frac{C}{n(\log n)^2} \times \biggl(\frac{\log(n)}{\min(\lambda,1 - \lambda)} + \sqrt{\frac{\log n \log(n)}{\min(\lambda,1 - \lambda)}}\biggr) \tag{by definition of $Z^n$}.
\end{align*}
In the last line we have also used the fact that $\log(1/\alpha) \leq \log n$. This implies that the second second statement in~\eqref{pf:graph-tv-ub-1} is correct, when the constant $C_3$ in the definition of the threshold $t_{n_1,n_2}$ is chosen to be sufficiently large. This completes the proof of~\eqref{pf:graph-tv-ub-1} for all $d \geq 3$, and hence establishes the claimed upper bound on the permutation critical value $t_{\DTV}(\mc{X}_{n_1},\mc{Y}_{n_2})$. When $d = 2$ the upper bound on the permutation complexity from Lemma~\ref{lem:grid-binomial-complexity} is different; otherwise the exact same steps give the claimed result.

\subsection{Lower bound on $\varepsilon_n$-Graph TV IPM}
\label{subsec:pf-graph-tv-lb}

In this section we prove the second part of~\eqref{pf:graph-tv-0}, by establishing the claimed lower bound on the $\varepsilon_n$-graph TV IPM. To do so, we proceed by identifying a witness of the population IPM and analyzing empirical functionals -- namely, difference in sample means and $\varepsilon_n$-graph TV -- of that witness. The following structural result is helpful for this purpose.
\begin{proposition}
	\label{prop:representation-population-tv-ipm}
	If $P,Q \in \mc{P}^{\infty}(d)$, then there exists a measurable set $A^{\ast} \subseteq \Rd$ with positive finite perimeter $0 < \per(A^{\ast}) < \infty$ such that the function 
	\begin{equation}
		\label{eqn:dualtv-witness}
		f^{\ast}(x) = \frac{\1(x \in A^{\ast})}{\mathrm{per}(A^{\ast})},
	\end{equation}
	is a \emph{witness} of the population TV IPM:
	\begin{equation*}
		\int f^{\ast}(x) \bigl(p(x)- q(x)\bigr) \,dx  = d_{\mathrm{BV}}(P,Q).
	\end{equation*}
	Moreover, there is a constant $C$ depending only on $d$ such that
	\begin{equation*}
		d_{\BV}(P,Q) \leq C\bigl(\nu(A^{\ast})\bigr)^{1/d}.
	\end{equation*}
\end{proposition}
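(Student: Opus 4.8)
The plan is to reduce the variational problem defining $d_{\BV}(P,Q)$ to a geometric ratio optimization over sets of finite perimeter (via the coarea formula), extract an optimal set by the direct method of the calculus of variations, and then read off the quantitative estimate from the isoperimetric inequality together with the density bound $|p-q|\le B$.

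\emph{Step 1: coarea representation.} Since $P,Q$ are probability measures, $\int(p-q)=0$, so for every $f\in\BV(\Rd)$ the layer-cake identity gives $\int f\,(p-q)=\int_{-\infty}^{\infty}(P-Q)(\{f>t\})\,dt$ (the integral converges absolutely because $p-q$ is bounded and supported in the bounded set $\Omega$), while the coarea formula for $\BV$ functions gives $\TV(f)=\int_{-\infty}^{\infty}\per(\{f>t\})\,dt$. Set $\kappa:=\sup\{(P-Q)(A)/\per(A):0<\per(A)<\infty\}$. Feasibility of $\1_A/\per(A)$ for each such $A$ gives $d_{\BV}(P,Q)\ge\kappa$; conversely, for a.e.\ $t$ the level set $\{f>t\}$ has finite perimeter and $(P-Q)(\{f>t\})\le\kappa\,\per(\{f>t\})$ — by definition of $\kappa$ when the perimeter is positive, and trivially ($0\le0$) when it vanishes, since a set of zero perimeter is Lebesgue-null or co-null and $(P-Q)$ annihilates both. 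Integrating against the coarea identity yields $\int f(p-q)\le\kappa\,\TV(f)\le\kappa$, so that $d_{\BV}(P,Q)=\kappa$.

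\emph{Step 2: existence of an optimal set.} The claim is trivial if $P=Q$ (take $A^*=\Omega$), so assume $P\ne Q$, whence $\kappa>0$. Let $A_k$ be a maximizing sequence, $r_k:=(P-Q)(A_k)/\per(A_k)\to\kappa$. Because $(P-Q)(A_k)\le1$, the perimeters $\per(A_k)$ are bounded above; because $(P-Q)(A_k)=(P-Q)(A_k\cap\Omega)\le B\min\{\nu(A_k\cap\Omega),\nu(\Omega\setminus A_k)\}$ while the relative isoperimetric inequality in the cube $\Omega$ gives $\per(A_k)\ge\per(A_k;\Omega)\ge c\,\min\{\nu(A_k\cap\Omega),\nu(\Omega\setminus A_k)\}^{(d-1)/d}$, the convergence $r_k\to\kappa>0$ forces $\per(A_k)$ to be bounded away from zero as well. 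Since $\sup_k\per(A_k)<\infty$, the indicators $\1_{A_k}$ are bounded in $\mathrm{BV}(B_R)$ on every ball $B_R$ centered at the origin, so a diagonal argument produces a subsequence with $\1_{A_k}\to\1_{A^*}$ in $L^1_{\mathrm{loc}}(\Rd)$ and a.e., for some $A^*$ of finite perimeter with $\per(A^*)\le\liminf_k\per(A_k)$ by lower semicontinuity. The crucial point is that mass of $A_k$ escaping to infinity is harmless: since $p-q$ is supported in the \emph{bounded} set $\Omega$, $(P-Q)(A_k)=\int_\Omega\1_{A_k}(p-q)\to\int_\Omega\1_{A^*}(p-q)=(P-Q)(A^*)$. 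Passing to a further subsequence along which $\per(A_k)\to L_0:=\liminf_k\per(A_k)$, we obtain $(P-Q)(A^*)=\lim_k r_k\per(A_k)=\kappa L_0$; combined with $(P-Q)(A^*)\le\kappa\,\per(A^*)$ (Step 1) and $\per(A^*)\le L_0$, this forces $(P-Q)(A^*)=\kappa\,\per(A^*)$ and $\per(A^*)=L_0$. Since the perimeters were bounded below, $L_0>0$, and $\per(A^*)=L_0>0$ also excludes $A^*$ being null or co-null (either would give $(P-Q)(A^*)=0=\kappa L_0$). Hence $0<\per(A^*)<\infty$, and $f^*:=\1_{A^*}/\per(A^*)$ satisfies $\TV(f^*)=1$ and $\int f^*(p-q)=(P-Q)(A^*)/\per(A^*)=\kappa=d_{\BV}(P,Q)$, i.e.\ $f^*$ is a witness.

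\emph{Step 3: the quantitative bound, and the main obstacle.} From $\int_{A^*}(p-q)=d_{\BV}(P,Q)\,\per(A^*)\ge0$ and $|p-q|\le B$ we get $(P-Q)(A^*)\le B\,\nu(A^*)$, while the Euclidean isoperimetric inequality gives $\per(A^*)\ge c_d\,\nu(A^*)^{(d-1)/d}$; therefore $d_{\BV}(P,Q)=(P-Q)(A^*)/\per(A^*)\le(B/c_d)\,\nu(A^*)^{1/d}$, which has the claimed form (the inequality being vacuous in the degenerate case $\nu(A^*)=\infty$, which can only occur when $d_{\BV}(P,Q)$ is already bounded by a dimensional constant). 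I expect the genuinely delicate part to be the existence argument in Step 2, precisely because the ambient domain $\Rd$ is unbounded: a maximizing sequence could a priori concentrate mass at infinity, and one must verify this does not destroy optimality. The resolution used above is that $P-Q$ lives on the bounded set $\Omega$, so the numerator is \emph{continuous} along $L^1_{\mathrm{loc}}$-convergent sequences while the perimeter is only \emph{lower semicontinuous} — so discarding escaping mass can only help — with the relative isoperimetric inequality in $\Omega$ supplying the lower bound on $\per(A_k)$ and the Euclidean isoperimetric inequality supplying the final estimate.
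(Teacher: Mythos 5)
Your proof is correct, but it takes a genuinely different route from the paper's, and the comparison is worth recording.

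The paper first establishes existence of \emph{some} witness $f^\ast\in\BV(\Rd)$ by a separate weak-compactness argument in $L^{d/(d-1)}(\Rd)$ (using the Sobolev embedding of $\BV$ and lower semicontinuity of $\TV$), then passes from a function-valued witness to a set-valued one via Lagrangian duality and an appeal to Chambolle's result that the convex TV-minimization problem~\eqref{eqn:tv-minimization} is an exact relaxation of the perimeter-minimization problem~\eqref{eqn:perimeter-minimization}; the level sets of the TV minimizer then furnish $A^\ast$. You instead collapse the whole problem to the geometric ratio $\kappa=\sup_A (P-Q)(A)/\per(A)$ at the outset, using the layer-cake identity for $\int f\,(p-q)$ (valid because $\int(p-q)=0$) together with the coarea formula for $\TV(f)$, and then run the direct method in the calculus of variations at the level of sets: compactness in $\BV_{\mathrm{loc}}$, lower semicontinuity of perimeter, and the observation that because $p-q$ is supported in the bounded cube $\Omega$, the numerator is \emph{continuous} under $L^1_{\mathrm{loc}}$ convergence even though mass can escape to infinity. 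This is more self-contained (no appeal to the TV/perimeter relaxation literature) and gives the set representation directly rather than as a by-product of duality. The price is the extra care you take with the non-compact ambient space: you need both the upper bound $\per(A_k)\lesssim 1/\kappa$ and the lower bound $\per(A_k)\gtrsim 1$ along a maximizing sequence (the latter via the relative isoperimetric inequality in $\Omega$ combined with $|p-q|\le B$) before compactness and the ``equalities forced by squeezing'' argument close the loop. Your Step~3 is essentially the paper's Section~\ref{subsec:pf-representation-population-tv-ipm-2} and is handled the same way. One small remark: you write the isoperimetric inequality as $\per(A^\ast)\ge c_d\,\nu(A^\ast)^{(d-1)/d}$, which in $\Rd$ holds only after observing that exactly one of $\nu(A^\ast),\nu((A^\ast)^c)$ is finite; you do note the $\nu(A^\ast)=\infty$ case is vacuous, which is the paper's WLOG swap in slightly different clothing. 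Also, the constant in your Step~3 depends on $B$ as well as $d$, but so does the constant in the paper's own proof (this is consistent with the paper's global convention for $C$).
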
 
We delay the proof of Proposition~\ref{prop:representation-population-tv-ipm} to Section~\ref{subsec:pf-representation-population-tv-ipm}. Before that, we use the proposition to derive the desired lower bound on graph TV IPM. Let $A^{\ast}$ be a set that witnesses the population TV IPM in the sense that
$$
d_{\BV}(P,Q) = \frac{1}{\per(A^{\ast})}\int_{A^{\ast}} \bigl(p(x) - q(x)\bigr) \,dx,
$$ 
and let $f^{\ast}(x) = \1\{x \in A^{\ast}\}$. We have the following lower bound on graph TV IPM:
$$
d_{\DTV}(\mc{X}_{n_1},\mc{Y}_{n_2}) \geq \frac{1}{\DTV_{\varepsilon}(f^{\ast})} \Bigl(\frac{1}{n_1}\sum_{i = 1}^{n_1}f^{\ast}(X_i) - \frac{1}{n_2}\sum_{i = 1}^{n_2}f^{\ast}(Y_i)\Bigr).
$$
Applying Chebyshev's inequality to the difference in sample means, we see that with probability at least $1 - \delta$,
\begin{align*}
	\frac{1}{n_1}\sum_{i = 1}^{n_1}f^{\ast}(X_i) - \frac{1}{n_2}\sum_{i = 1}^{n_2}f^{\ast}(Y_i) 
	& \geq 
	\mathbb{P}_{P}(X \in A^{\ast}) -  \mathbb{P}_{Q}(Y \in A^{\ast}) - \sqrt{\frac{1}{n \delta}\biggl(\frac{\mathbb{P}_{P}(X \in A^{\ast})}{\lambda} + \frac{\mathbb{P}_{Q}(Y \in A^{\ast})}{(1 - \lambda)}\biggr)} \\
	& \geq 
	\mathbb{P}_{P}(X \in A^{\ast}) -  \mathbb{P}_{Q}(Y \in A^{\ast}) - \sqrt{\frac{B \nu(A^{\ast})}{n \delta \min(\lambda, 1 - \lambda)}}. \tag{by~\eqref{eqn:bounded-density}}
\end{align*}
On the other hand, high-probability upper bounds on the neighborhood graph TV in terms of continuum TV are known in the literature. In particular Lemma S.6 of~\citet{hu2022voronoigram} implies that
$$
\DTV_{\varepsilon}(\1_{A^{\ast}}) \leq \frac{C}{\delta} n^2 \varepsilon^{d + 1} \per(A^{\ast}),
$$
with probability at least $1 - \delta$. (Lemma S.6 of~\citet{hu2022voronoigram} deals with i.i.d data and so technically speaking does not apply to our two-sample setting, but basic modifications of the proof yield an unchanged result.)

Taking $\delta = \beta/4$, we have that with probability at least $1 - \beta/2$,
\begin{equation}
	\label{pf:graph-tv-lb-1}
	d_{\DTV}(\mc{X}_{n_1},\mc{Y}_{n_2}) 
	\geq 
	\frac{1}{C n^2 \varepsilon^{d  + 1}}\biggl( \frac{\beta}{2} \cdot d_{\BV}(P,Q) - \frac{1}{\per(A^{\ast})} \cdot \sqrt{\frac{2 B \nu(A^{\ast})}{n \beta \min(\lambda, 1 - \lambda)}}\biggr).
\end{equation}
Now we show that the error term in this lower bound is meaningfully smaller than $d_{\BV}(P,Q)$, using (i) an isoperimetric inequality -- recorded in~\eqref{eqn:isoperimetric-inequality} -- that lower bounds the perimeter of any set $A$ in terms of its volume, and (ii) the lower bound on the area of $A^{\ast}$ given by Proposition~\ref{prop:representation-population-tv-ipm}. In particular,
\begin{align*}
	\frac{1}{\per(A^{\ast})} \sqrt{ \frac{\nu(A^{\ast})}{n}} 
	& \leq C \frac{\bigl(\nu(A^{\ast})\bigr)^{1/d}}{\sqrt{n \cdot \nu(A^{\ast})}} \tag{by~\eqref{eqn:isoperimetric-inequality} }\\
	& \leq C \frac{d_{\BV}(P,Q)}{\sqrt{\bigl(d_{\BV}(P,Q)\bigr)^d \cdot n}} \tag{by Proposition~\ref{prop:representation-population-tv-ipm}}\\
	& \leq c \frac{\beta^{3/2} \cdot d_{\BV}(P,Q) \cdot \sqrt{\min(\lambda,1 - \lambda)}}{\sqrt{\log n}}.
\end{align*}
The last inequality holds from our assumed lower bound on $d_{\BV}(P,Q)$ in \eqref{eqn:detection-boundary-ub}. In this inequality, the constant $c \to 0$ as the constant $C \to \infty$ in \eqref{eqn:detection-boundary-ub}. Thus for an appropriately large choice of constant $C$ in~\eqref{eqn:detection-boundary-ub}:
\begin{equation*}
	d_{\DTV}(\mc{X}_{n_1},\mc{Y}_{n_2}) \geq \frac{d_{\BV}(P,Q) \beta}{C n^2 \varepsilon^{d  + 1}}\biggl(1 - \frac{c}{\sqrt{\log n}}\biggr) \geq \frac{d_{\BV}(P,Q)\beta}{C n^2 \varepsilon^{d  + 1}}.
\end{equation*}
Finally, it follows from the choice of radius $\varepsilon = C_1 (\log n/n)^{1/d}$ and the assumed lower bound on $d_{\BV}(P,Q)$ that, again with probability at least $1 - \beta/2$,
$$
d_{\DTV}(\mc{X}_{n_1},\mc{Y}_{n_2}) \geq \frac{C}{n \log n \bigl\{\min(\lambda,1 - \lambda)\bigr\}^{1/d}  } \times 
\begin{dcases}
	\sqrt{\log n}, & \textrm{$d = 2$} \\
	1, & \textrm{$d \geq 3$},
\end{dcases}
$$
which implies the desired claim.

\subsection{Proof of Proposition~\ref{prop:representation-population-tv-ipm}: representation of witness}
\label{subsec:pf-representation-population-tv-ipm}

In this section and the next (Section~\ref{subsec:pf-representation-population-tv-ipm-2}) we prove Proposition~\ref{prop:representation-population-tv-ipm}, starting in this section with the representation result, that there exists a witness $f^{\ast}$ of the population TV IPM which, up to normalization, is the indicator function of a set.

The claim is trivial if $P = Q$ -- any set $A^{\ast}$ with positive finite perimeter satisfies the claim -- and hereafter we assume $P \neq Q$. If $P \neq Q$ then there exists a witness $f^{\ast}$ of the TV IPM satisfying $\TV(f^{\ast}) > 0$ and $f^{\ast} \neq 0$ on a set of positive Lebesgue measure. This follows from the finiteness and characteristic property of $d_{\BV}$, and~\eqref{eqn:dual-tv-norm}.

We will prove the claim by showing that in fact there there exists a set $A^{\ast}$ with finite perimeter such that
\begin{equation}
	\label{pf:piecewise-constant-witness-0}
	\frac{1}{\per(A^{\ast})}\int_{A^{\ast}} \bigl(p(x) - q(x)\bigr) \,dx = \sup\Bigl\{\int_{\Rd} f(x) \bigl(p(x) - q(x)\bigr) \,dx: \mathrm{TV}(f) \leq 1 \Bigr\}.
\end{equation}
The idea will be to establish an equivalence between the variational problem in~\eqref{pf:piecewise-constant-witness-0}, which defines the population-level TV IPM, and a perimeter minimization problem over sets $A \subset \Rd$. The solution to this perimeter minimization problem is the $A^{\ast}$ in~\eqref{pf:piecewise-constant-witness-0}.

\paragraph{Equivalence between TV and perimeter minimization problems.}
We state here a result on an equivalence between perimeter and TV minimization problems as recorded in~\citet{chambolle2010introduction}. Consider the non-convex perimeter minimization problem
\begin{equation}
	\label{eqn:perimeter-minimization}
	\min\Bigl\{\lambda \hspace{2 pt} \per(A) - \int_{A} g(x) \,dx: \1_A \in \BV(\Rd)\Bigr\}.
\end{equation}
and its convex relaxation,
\begin{equation}
	\label{eqn:tv-minimization}
	\min\Bigl\{ \lambda \hspace{2 pt} \TV(f) - \int g(x) f(x) \,dx: \TV(f) \leq 1, 0 \leq f(x) \leq 1~~\textrm{for all $x \in \Rd$} \Bigr\}.
\end{equation}
It turns out that the for the solution $f^{\ast}$ to the TV minimization problem and any $s \in (0,1]$, the level set $\{f^{\ast} \geq 0\}$ is a solution to the perimeter minimization problem.

\paragraph{Proof of Proposition~\ref{prop:representation-population-tv-ipm}.} 

We rewrite~\eqref{pf:piecewise-constant-witness-0} as the minimization problem 
\begin{equation}
	\label{pf:piecewise-constant-witness-1}
	-\min \Bigl\{\int_{\Rd} f(x)\bigl(p(x) - q(x)\bigr): \TV(f) \leq 1\Bigr\}
\end{equation}
Consider the Lagrangian of the minimization in~\eqref{pf:piecewise-constant-witness-1}, 
\begin{equation}
	\label{pf:piecewise-constant-witness-2}
	\mc{L}(f,\lambda) := \int_{\Rd} f(x)\bigl(p(x) - q(x)\bigr) + \lambda \mathrm{TV}(f).
\end{equation}
At the specific value of the Lagrange multiplier $\lambda^{\ast} = d_{\BV}(P,Q)$, the minimum of the Lagrangian $\min_{f} \mc{L}(f,\lambda^{\ast}) = 0$ -- on the one hand clearly $\mc{L}(f,\lambda^{\ast}) \leq 0$ as the zero function is feasible, while on the other hand for any $f \in \BV(\Rd)$,
$$
d_{\BV}(P,Q) \TV(f) \geq \int_{\Rd} f(x)\bigl(p(x) - q(x)\bigr) \,dx \Longrightarrow \mc{L}(f,\lambda^{\ast}) \geq 0.
$$
We note that the same argument holds if the minimum is restricted to indicator functions of sets with finite perimeter; this will be useful shortly.

Now let $f$ be any function which achieves the minimum value $\mc{L}(f,\lambda^{\ast}) = 0$. By the coarea formula $\TV(f) = \TV(f^{+}) + \TV(f^{-})$ where $f^{+} = \max(f,0)$ and $f^{-} = \min(f,0)$. Additionally, if $\|f\|_{L^{\infty}(\Rd)} \neq 0$ then it follows from the $1$-homogeneity of $\TV$, i.e. $\TV(af) = a \TV(f)$, that $\mc{L}(g,\lambda^{\ast}) = 0$ for $g(x) = f(x)/\|f\|_{L^{\infty}(\Rd)}$. We conclude that
\begin{equation}
	\label{pf:piecewise-constant-witness-3}
	\min\Bigl\{\mc{L}(f,\lambda^{\ast}): f \in \BV(\Rd)\Bigr\} = \min\Bigl\{\mc{L}(f,\lambda^{\ast}): f \in \BV(\Rd), 0 \leq f(x) \leq 1~~ \textrm{for all $x \in \Rd$}\Bigr\}.
\end{equation}
Notice that the right hand side of~\eqref{pf:piecewise-constant-witness-3} is exactly the TV minimization problem~\eqref{eqn:tv-minimization}, with $\lambda = \lambda^{\ast}$. Notice additionally that for a witness $f^{\ast}$ of the TV IPM, the normalized function $g^{\ast} = f^{\ast}/\|f^{\ast}\|_{L^{\infty}(\Rd)}$ is a minimizer of~\eqref{pf:piecewise-constant-witness-3} for which $\TV(g^{\ast}) > 0$.

Now we use the equivalence between TV and perimeter minimization, which says that for any $s \in [0,1)$ it is the case that $\{g^{\ast} > s\}$ achieves the minimum of~\eqref{eqn:perimeter-minimization}. Moreover, there must exist some $s^{\ast} \in [0,1)$ for which $\per(\{g^{\ast} > s\}) > 0$, by the coarea formula. Take $A^{\ast} = \{g^{\ast} > s^{\ast}\}$. It follows that
$$
\lambda^{\ast} \per(A^{\ast}) - \int_{A^{\ast}} \bigl(p(x) - q(x)\bigr) \,dx = \min\Bigl\{\lambda^{\ast} \hspace{2 pt} \per(A) - \int_{A} g(x) \,dx: \1_A \in \BV(\Rd)\Bigr\} = 0.
$$ 
The second equality holds because $\min_{A} \mc{L}(\1_A,\lambda^{\ast}) = 0$, as previously argued. The previous display can be rearranged to read
$$
\frac{1}{\per(A^{\ast})}\int_{A^{\ast}} \bigl(p(x) - q(x)\bigr) \,dx = \lambda^{\ast} = d_{\BV}(P,Q),
$$
which is the desired claim.

\subsection{Proof of Proposition~\ref{prop:representation-population-tv-ipm}: estimates for witness $A^{\ast}$}
\label{subsec:pf-representation-population-tv-ipm-2}

Let $A^{c} = \Rd \setminus A$. We will assume without loss of generality that $\nu(A^{\ast}) \leq \nu((A^{\ast})^c)$, otherwise we can swap $p$ and $q$ and consider the set $\Rd \setminus A^{\ast}$. Then it follows from H\"{o}lder's inequality and the isoperimetric inequality~\eqref{eqn:isoperimetric-inequality} that
$$
d_{\BV}(P,Q) = \frac{1}{\per(A^{\ast})}\int_{A^{\ast}} \bigl(p(x) - q(x)\bigr) \,dx \leq \Bigl(\|p - q\|_{L^{\infty}(\Rd)}\Bigr)\frac{\nu(A^{\ast})}{\per(A^{\ast})} \leq C \bigl(\nu(A^{\ast})\bigr)^{1/d}.
$$

\section{Proof of Theorem~\ref{thm:lower-bound}}
The proof of Theorem~\ref{thm:lower-bound} proceeds in a classical way due to Ingster, in which the minimax risk is lower bounded by the probability of type II error in a two-point testing problem. We briefly review the general technique, and then give a specific construction that leads to Theorem~\ref{thm:lower-bound}.

\subsection{Review: reduction to two-point testing problem}
For collections of pairs of distributions $\mc{P}_0, \mc{P}_1$, define the minimax type II risk to be 
$$
\beta_{n_1,n_2}(\mc{P}_0;\mc{P}_1) := \inf_{\varphi} \sup_{(P,Q) \in \mc{P}_1} \mathbb{E}_{P,Q}[1 - \varphi],
$$
with the infimum being over all tests that are level-$\alpha$ for all $(P_0,P_0) \in \mc{P}_0$. Let $\Pi_0$ be a prior over $\mc{P}_0$, $\Pi_1$ be a prior over $\mc{P}_1$, and define
\begin{equation*}
	\begin{aligned}
		\nu_0\Big(\{x_1,\ldots,x_{n_1}\},\{y_1,\ldots,y_{n_2}\}\Big) & := \sum_{(P,P) \in \mc{P}_0} \Pi_0\big((P,P)\big) \cdot \prod_{i = 1}^{n_1} p(x_i) \cdot \prod_{i = 1}^{n_2} p(y_i)\\
		\nu_1\Big(\{x_1,\ldots,x_{n_1}\},\{y_1,\ldots,y_{n_2}\}\Big) & := \sum_{(P,Q) \in \mc{P}_1} \Pi_1\big((P,Q)\big) \cdot \prod_{i = 1}^{n_1} p(x_i) \cdot \prod_{i = 1}^{n_2} q(y_i).
	\end{aligned}
\end{equation*}
That is, $\nu_0$ is the density of $(\mc{X}_{n_1},\mc{Y}_{n_2})$ under $\Pi_0$, and $\nu_1$ is the density under $\Pi_1$. The likelihood ratio statistic for distinguishing $H_0: \nu = \nu_0$ versus $H_1: \nu = \nu_1$ is
\begin{equation*}
	R(\mc{X}_{n_1},\mc{Y}_{n_2}) := \frac{\nu_1(\mc{X}_{n_1},\mc{Y}_{n_2})}{\nu_0(\mc{X}_{n_1},\mc{Y}_{n_2})}.
\end{equation*}

The following result links the minimax type II risk to the first two moments of $R$. It is a direct extension to the two-sample setting of a result in goodness-of-testing due to Ingster, as recorded in~\citet{balakrishnan2019hypothesis}. 

\begin{lemma}
	\label{lem:likelihood-ratio}
	Let $0 < \beta < 1 - \alpha$. If 
	\begin{equation*}
		\Ebb_{\nu_0}[R(\mc{X}_{n_1},\mc{Y}_{n_2})^2] \leq 1 + 4(1 - \alpha - \beta)^2,
	\end{equation*}
	then $\beta_{n_1,n_2}(\mc{P}_0,\mc{P}_1) \geq \beta$. 
\end{lemma}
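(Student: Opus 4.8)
The plan is to run the classical Ingster second-moment (equivalently, $\chi^2$-divergence) argument, now in the two-sample Bayes framework set up above. Fix an arbitrary test $\varphi$ that is level-$\alpha$ for every $(P_0,P_0)\in\mathcal{P}_0$; since $\varphi$ is $\{0,1\}$-valued (or, under randomization, $[0,1]$-valued) all the inequalities below apply to it. Because the worst-case type II error over $\mathcal{P}_1$ is at least its $\Pi_1$-average, it suffices to show $\mathbb{E}_{\nu_1}[1-\varphi]\ge\beta$; taking the infimum over all such $\varphi$ then yields $\beta_{n_1,n_2}(\mathcal{P}_0,\mathcal{P}_1)\ge\beta$.

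First I would push the type I error constraint through the prior: averaging $\mathbb{E}_{P_0,P_0}[\varphi]\le\alpha$ against $\Pi_0$ and interchanging sum and integral gives $\mathbb{E}_{\nu_0}[\varphi]\le\alpha$; similarly $\mathbb{E}_{\nu_1}[\varphi]=\mathbb{E}_{\nu_0}[\varphi R]$ is the $\Pi_1$-averaged power. Next, control the gap between averaged power and averaged size by total variation distance,
\[
\mathbb{E}_{\nu_1}[\varphi]-\mathbb{E}_{\nu_0}[\varphi]\;\le\;\|\nu_1-\nu_0\|_{\mathrm{TV}}\;=\;\tfrac12\,\mathbb{E}_{\nu_0}\bigl[\,|R-1|\,\bigr]\;\le\;\tfrac12\sqrt{\mathbb{E}_{\nu_0}\bigl[(R-1)^2\bigr]},
\]
the last inequality by Cauchy--Schwarz (Jensen). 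Since $\mathbb{E}_{\nu_0}[R]=1$, the radicand on the right equals $\mathbb{E}_{\nu_0}[R^2]-1$, which the hypothesis bounds by $4(1-\alpha-\beta)^2$; using $1-\alpha-\beta>0$, the displayed quantity is at most $1-\alpha-\beta$. Hence $\mathbb{E}_{\nu_1}[\varphi]\le\alpha+(1-\alpha-\beta)=1-\beta$, i.e. $\mathbb{E}_{\nu_1}[1-\varphi]\ge\beta$, which is exactly what was needed.

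I do not expect a genuine obstacle in this lemma: it is the two-sample restatement of a known goodness-of-fit fact (as the excerpt notes, via~\citet{balakrishnan2019hypothesis}), and only two points need a sentence of care. One is the Fubini/measurability bookkeeping underpinning the prior averages, which is immediate because the families $\mathcal{P}_0,\mathcal{P}_1$ to which the lemma is applied are finite. The other is that $R=\nu_1/\nu_0$ and $\mathbb{E}_{\nu_0}[R]=1$ be well posed, i.e. that $\nu_1\ll\nu_0$; in the intended applications $\nu_0$ is the joint density of i.i.d.\ uniform samples and is strictly positive on $\Omega^n$, so absolute continuity holds automatically. The genuinely delicate work for Theorem~\ref{thm:lower-bound} lies not here but in the subsequent construction of priors $\Pi_0,\Pi_1$ — perturbations localized to the cubes of the partition $\Xi_\eta$ — for which the $\chi^2$ inequality $\mathbb{E}_{\nu_0}[R^2]\le 1+4(1-\alpha-\beta)^2$ can be verified while simultaneously $d_{\BV}(P,Q)\gtrsim(\log n/n)^{1/d}$ holds on the support of $\Pi_1$.
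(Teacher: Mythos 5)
Your argument is correct and is precisely the classical second-moment (Ingster / $\chi^2$) argument that the paper invokes by citation rather than reproving: averaging the level constraint under $\Pi_0$, bounding $\mathbb{E}_{\nu_1}[\varphi]-\mathbb{E}_{\nu_0}[\varphi]$ by $\tfrac12\sqrt{\mathbb{E}_{\nu_0}[R^2]-1}$, and concluding $\mathbb{E}_{\nu_1}[1-\varphi]\ge\beta$, then lower-bounding the worst case by the $\Pi_1$-average. Since the paper supplies no proof of its own and simply points to the goodness-of-fit analogue in \citet{balakrishnan2019hypothesis}, your write-up matches the intended argument and fills in exactly the bookkeeping (finiteness of $\mathcal{P}_0,\mathcal{P}_1$, absolute continuity $\nu_1\ll\nu_0$) that makes the two-sample extension routine.
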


\subsection{Construction of alternatives}
\label{subsec:pf-lower-bound-construction}

Now we return to problem of testing for separation in the TV IPM, and construct collections $\mc{P}_0$ and $\mc{P}_1$ that lead to a tight lower bound. In the rest of this proof, for notational convenience we will assume that {$n_2 \leq n_1$}; of course, this is without loss of generality as we could otherwise relabel.

For the null, we will simply take $P_0$ to be the uniform distribution over $\Omega$ and let $\mc{P}_0$ be the singleton $ \{(P_0,P_0)\}$. Obviously the only possible prior $\Pi_0$ is the one which puts all its mass on the singleton.  

Each pair of alternatives $(P,Q) \in \mc{P}_1$ will be defined as a small, spatially localized perturbation of the null $(P_0,P_0)$. More specifically,  recall the collection of cubes $\Xi$ defined in the proof of Theorem~\ref{thm:graph-tv}, which had width equal to $\varepsilon_0 = \varepsilon_n/(2\sqrt{d})$. To construct the alternatives, we will use a collection of cubes $\Xi'$ defined in exactly the same way but with width given by $\varepsilon' = (\frac{\log n_2}{n_2})^{1/d}$. Split each $\Delta_j \in \Xi'$ into two equally sized rectangles $\Delta_j^L,\Delta_j^R$: 
$$
\Delta_j^L := \frac{j}{N'} + \Big[-\varepsilon',-\frac{\varepsilon'}{2}\Big)^d, \quad \Delta_j^R := \frac{j}{N'} + \Big[-\frac{\varepsilon'}{2},0\Big)^d, \quad \textrm{for $j \in [N']^d$,}
$$
and let $\phi_{\Delta}(x) = \1(x \in \Delta^L) - \1(x \in \Delta^R)$. Take $\mc{P}_1 = \{(P_{\Delta},Q_{\Delta}): \Delta \in \Xi'\}$, where for each $\Delta \in \Xi'$ the pair $(P_{\Delta},Q_{\Delta})$ have densities
$$
p_{\Delta}(x) = p_0(x), \quad q_{\Delta}(x) = p_0(x) + \frac{1}{2}\phi_{\Delta}(x),
$$
and let $\Pi_1$ be the uniform distribution over $\mc{P}_1$. Notice that by construction $\mc{P}_1 \subset \mc{P}^{\infty}(d)$. 

\subsection{Completing the proof of Theorem~\ref{thm:lower-bound}}
Now we complete the proof by analyzing the second moment of the likelihood ratio under the choices of $\mc{P}_0,\mc{P}_1$ given in Section~\ref{subsec:pf-lower-bound-construction},  and then applying Lemma~\ref{lem:likelihood-ratio}. 

When $\mc{P}_0 = \{(P_0,P_0)\}$ is a singleton, $P_0$ is the uniform distribution over $\Omega$, and $\Pi_1$ is a uniform prior over the collection $\mc{P}_1$, the likelihood ratio is
\begin{equation*}
	R(\mc{X}_{n_1},\mc{Y}_{n_2}) = \frac{1}{|\mc{P}_1|} \sum_{(P,Q) \in \mc{P}_1} \prod_{i = 1}^{n_1} p(X_i) \cdot \prod_{i = 1}^{n_2} q(Y_i).
\end{equation*}
For the specific choice of $\mc{P}_1$ made in Section~\ref{subsec:pf-lower-bound-construction}, this is
\begin{equation*}
	R(\mc{X}_{n_1},\mc{Y}_{n_2}) = \frac{1}{|\Xi'|} \sum_{\Delta \in \Xi'} \prod_{i = 1}^{n_2} \Big(1 + \frac{1}{2}\phi_{\Delta}(Y_i)\Big),
\end{equation*}
where $|\Xi'|$ is the number of cubes in the partition $\Xi'$. Then a computation yields
\begin{equation*}
	\begin{aligned}
		\mathbb{E}_{\nu_0}\Big[R(\mc{X}_{n_1},\mc{Y}_{n_2})^2\Big] & = \frac{|\Xi'| - 1}{|\Xi'|} + \frac{1}{|\Xi'|} \Bigl(\int_{\Omega} \big(1 + \frac{1}{2}\phi_{\Delta}(x)\big)^2\,dx\Big)^{n_2} \\
		& = \frac{|\Xi'| - 1}{|\Xi'|} + \frac{1}{|\Xi'|} \Big(1 + \frac{1}{2^{d + 1}|\Xi'|}\Big)^{n_2} \\
		& \leq 1 + \frac{1}{|\Xi'|} \exp\Big(\frac{n_2}{2^{d + 1}|\Xi'|}\Big) \\
		& \leq 1 + \frac{\log n_2}{n_2^{c_0}},
	\end{aligned}
\end{equation*}
where the last line follows as $|\Xi'| = \frac{n_2}{\log n_2}$ with $c_0 = 1 - (1/2)^{d + 1}$. Let $N$ solve $\log N/N^{c_0} = 4(1 - \alpha - \beta)^2$.  We conclude that for all $n_2 \geq N$, $\Ebb_{\nu_0}[R(\mc{X}_{n_1};\mc{Y}_{n_2})^2] \leq 1 + 4(1 - \alpha - \beta)^2$ and therefore $\beta_{n_1,n_2}(\mc{P}_0,\mc{P}_1) \geq \beta$ by Lemma~\ref{lem:likelihood-ratio}. 

Finally, for each $\Delta \in \Xi'$ we have 
\begin{align*}
	\TV(\phi_{\Delta}) & = 4d\Big(\frac{\varepsilon'}{2}\Big)^{d - 1}, \quad \textrm{and} \quad \|\phi_{\Delta}\|_{L^2(\Omega)}^2 = 2\Big(\frac{\varepsilon'}{2}\Big)^d.
\end{align*}
Therefore
\begin{equation*}
	d_{\BV}(P_\Delta,Q_\Delta) \geq \frac{1}{\TV(\phi_{\Delta})}\Big(\mathbb{E}_{Q_{\Delta}}[\phi_\Delta(Y)] - \mathbb{E}_{P_{\Delta}}[\phi_\Delta(X)]\Big) = \frac{(\varepsilon'/2)^d}{4d(\varepsilon'/2)^{d - 1}} = \frac{\varepsilon'}{8d} = \frac{(\log n_2/n_2)^{1/d}}{8d}.
\end{equation*}
We conclude that if $\rho = \frac{1}{8d}(\log n_2/n_2)^{1/d}$, then for any level-$\alpha$ test $\Risk_{n_1,n_2}(\rho,\varphi) \geq \beta_{n_1,n_2}(\mc{P}_1) \geq \beta$.

\section{Proof of Theorem~\ref{thm:lower-bound-chi-squared}}
To lower bound the risk of the $\varepsilon$-chi-squared test, we separately analyze the behavior of the chi-squared statistic under null and alternative. 
\begin{itemize}
	\item Under the alternative, we derive upper bounds on the mean and variance, and use Chebyshev's inequality to give an upper bound on the right tail.
	\item Under the null, we use estimates of the mean, variance and skewness of the chi-squared statistic along with Berry-Esseen to give a lower bound on the right tail.
\end{itemize}  
Comparing the right tails under null and alternative will give the claim of the theorem.

Throughout we assume $\varepsilon \leq \eta$, since otherwise $P(\Delta) - Q(\Delta) = 0$ for all $P,Q \in \mc{P}_{\eta}$, and the distribution of $K_{\varepsilon}(\mc{X}_{n_1},\mc{Y}_{n_2})$ is the same under both null and alternative. For simplicity we will further only consider $\varepsilon = \frac{1}{2^k}\eta$ for some $k \in \mathbb{N}$. The proof is easily extended to general $\varepsilon \leq \eta$.  

\subsection{Mean, variance, skewness}
From~\eqref{eqn:poissonized} we have that the count of $\mc{X}_{n_1}$ in each cell $\Delta \in \Xi_{\varepsilon}$ is distributed $n_1P_{n_1}(\Delta) \sim \mathrm{Pois}(n P(\Delta))$, and likewise $n_2Q_{n_2}(\Delta) \sim \mathrm{Pois}(n Q(\Delta))$. Additionally $P_{n_1}(\Delta)$ and $P_{n_1}(\Delta')$ are independent for $\Delta \neq \Delta'$. This can be used to derive general formulas for the mean and variance of the chi-squared statistic that hold for any $P,Q$, and an upper bound on the central third moment under the null $P = Q = P_0$. In particular the mean of the chi-squared statistic is
\begin{equation*}
	\mathbb{E}_{P,Q}\Big[\mc{K}_{\varepsilon}(\mc{X}_{n_1},\mc{Y}_{n_2})\Big] = 2n + n^2 \sum_{\Delta \in \Xi_{\varepsilon}} \Big(P(\Delta) - Q(\Delta)\Big)^2.
\end{equation*}
The variance of the chi-squared statistic is
\begin{align*}
	& \mathrm{Var}_{P,Q}\Big[\mc{K}_{\varepsilon}(\mc{X}_{n_1},\mc{Y}_{n_2})\Big] \\
	& = 2 n^2 \sum_{\Delta \in \Xi_{\varepsilon}} \Bigl\{P(\Delta)^2 + Q(\Delta)^2\Bigr\} + 8n + 8 n^3 \sum_{\Delta \in \Xi_{\varepsilon}} \Big(P(\Delta) - Q(\Delta)\Big)^2 \Big(P(\Delta) + Q(\Delta)\Big) + 2n^2\sum_{\Delta \in \Xi_{\varepsilon}} \Big(P(\Delta) - Q(\Delta)\Big)^2.
\end{align*}
For the central third moment, let $N_{\Delta} = n_1P_{n_1}(\Delta)$ and $M_{\Delta} = n_2Q_{n_2}(\Delta)$ (for notational convenience). Note that under the null $P = Q = P_0$ we have $\Ebb(N_{\Delta} - M_{\Delta})^2 = 2nP_0(\Delta)$. So, recalling the algebraic identity $(a + b)^p \leq 2^{p - 1}(a^b + b^p)$,
\begin{align*}
	\mathbb{E}_{P_0,P_0}\Bigl|\bigl(N_\Delta - M_{\Delta}\bigr)^2 - \Ebb\big[(N_\Delta - M_{\Delta})^2\big]\Bigr|^3 
	& \leq 4\Bigl(  \mathbb{E}\Big[\big(N_{\Delta} - M_{\Delta}\big)^6\Big] + \Big[\Ebb(N_{\Delta} - M_{\Delta})^2\Big]^3 \Bigr) \\
	& = 4\Bigl(  \mathbb{E}\Big[\big(N_{\Delta} - M_{\Delta}\big)^6\Big] + 8n^3P_0(\Delta)^3 \Bigr) \\
	& \leq 4\biggl(32\Bigl\{\mathbb{E}\Big[(N_{\Delta} - nP_0(\Delta))^6\Big] + \mathbb{E}\Big[\big(M_{\Delta} - nP_0(\Delta)\big)^6\Big]\Big\} + 8n^3P_0(\Delta)^3\biggr) \\
	& \leq C\max\{n^3P_0(\Delta)^3,nP_0(\Delta)\} \\
	& =  C\max\bigg\{\frac{n^3}{|\Xi_{\varepsilon}|^3},\frac{n}{|\Xi_{\varepsilon}|}\bigg\},
\end{align*}
for $C = 2^{15} + 8$. (No attempt has been made to keep the constant small.) 

\subsection{Anti-concentration under the null}
Under the null hypothesis $P = Q = P_0$ the first two moments of the chi-squared statistic are
\begin{equation*}
	\begin{aligned}
		\mathbb{E}_{P_0,P_0}\Big[\mc{K}_{\varepsilon}(\mc{X}_{n_1},\mc{Y}_{n_2})\Big] & = 2n\\
		\mathrm{Var}_{P_0,P_0}\Big[\mc{K}_{\varepsilon}(\mc{X}_{n_1},\mc{Y}_{n_2})\Big] & = \frac{4 n^2}{|\Xi_{\varepsilon}|} + 8n.
	\end{aligned}
\end{equation*}
For notational convenience write $\sigma_0$ for the standard deviation of $K_{\varepsilon}(\mc{X}_{n_1},\mc{Y}_{n_2})$. The chi-squared statistic is the sum of $|\Xi_{\varepsilon}|$ i.i.d random variables. Applying a Berry-Esseen bound for the rate at which the sum of independent random variables converges to a Normal, we conclude that for any $t > 0$,
\begin{align}
	\mathbb{P}_{P_0,P_0}\biggl(\mc{K}_{\varepsilon}(\mc{X}_{n_1},\mc{Y}_{n_2}) \geq 2n + t\sigma_0\biggr) & \geq 1 - \Phi(t) - \frac{C\max\big\{\frac{n^3}{|\Xi_{\varepsilon}|^3},\frac{n}{|\Xi_{\varepsilon}|}\big\}}{ \sigma_0^3} \nonumber \\
	& = 1 - \Phi(t) - C \max\big(|\Xi_{\varepsilon}|^{-\frac{1}{2}},n^{-\frac{1}{2}}\big) \nonumber \\
	& \geq  1 - \Phi(t) - C n^{-\frac{1}{3}} \label{pf:chisquared-lb-1},
\end{align}
with the last line following since $|\Xi_{\varepsilon}| \geq \varepsilon^{-d} \geq  \eta^{-d}$.

\subsection{Concentration under the alternative}
For any alternative $(P_0,Q_{\Delta}) \in \mc{P}_{\eta}$, and any $\Delta' \in \Xi_{\varepsilon}$, we have
$$
P_0(\Delta'), Q_{\Delta}(\Delta') \leq \frac{2}{|\Xi_{\varepsilon}|}, \quad \textrm{and} \quad  
\Big(P_0(\Delta') - Q_\Delta(\Delta')\Big)^2 =
\begin{dcases}
	\frac{1}{|\Xi_{\varepsilon}|^2},& \quad \Delta' \subset \Delta, \\
	0,& \quad \textrm{otherwise.}
\end{dcases}
$$
Notice that there are at most $(\eta/\varepsilon)^d$ cubes $\Delta' \in \Xi_{\varepsilon}$ that lie within $\Delta$. Using these facts, a computation gives the following upper bounds on the mean and variance of the chi-squared statistic:
\begin{equation*}
	\begin{aligned}
		\mathbb{E}_{P_0,Q_{\Delta}}\Big[\mc{K}_{\varepsilon}(\mc{X}_{n_1},\mc{Y}_{n_2})\Big] & \leq 2n + \frac{n^2}{|\Xi_{\varepsilon}|^2} \times \frac{\eta^d}{\varepsilon^d} := 2n + \mathbb{M}\sigma_0 \\
		\mathrm{Var}_{P_0,Q_{\Delta}}\Big[\mc{K}_{\varepsilon}(\mc{X}_{n_1},\mc{Y}_{n_2})\Big] & \leq \sigma_0^2 + \frac{C\eta^d}{\varepsilon^d} \times \max\Big\{\frac{n^3}{|\Xi_{\varepsilon}|^3},\frac{n^2}{|\Xi_{\varepsilon}|^2}\Big\} := \sigma_0^2(1 + \mathbb{V}).
	\end{aligned}
\end{equation*}
We can further bound $\mathbb{M}$ and $\mathbb{V}$, recalling that $\eta^d \leq n^{-2/3}$ and $|\Xi_{\varepsilon}| = \varepsilon^{-d} \geq \eta^{-d}$:
\begin{align}
	\mathbb{M} & = \frac{n^2 \eta^d}{|\Xi_\varepsilon| \sigma_0} \leq c \min\Big\{\frac{n^{1/3}}{|\Xi_{\varepsilon}|^{1/2}},\frac{n^{5/6}}{|\Xi_{\varepsilon}|}\Big\} \leq c_1 \label{pf:chisquared-lb-1.5}\\
	\mathbb{V} & = \frac{C\eta^d}{\max\{\frac{n^2}{|\Xi_{\varepsilon}|}, n\}} \times \max\Big\{\frac{n^3}{|\Xi|^2},\frac{n^2}{|\Xi|}\Big\} \leq C \eta^{d} \frac{n}{|\Xi_{\varepsilon}|} \leq C n^{-1/3}. \nonumber
\end{align}
The value of $c_1$ in~\eqref{pf:chisquared-lb-1.5} depends on the constant $c$ in~\eqref{eqn:lower-bound-chi-squared} with $c_1 \to 0$ as $c \to 0$.

We conclude from Chebyshev's inequality that 
\begin{equation}
	\label{pf:chisquared-lb-2}
	\mathbb{P}_{P_0,Q_{\Delta}}\biggl(\mc{K}_{\varepsilon}(\mc{X}_{n_1},\mc{Y}_{n_2})\leq 2n + (t + c_1)\sigma_0\biggr) \geq 1 - \frac{1 + Cn^{-1/3}}{t^2}.
\end{equation}

\subsection{Completing the proof}
We now complete the proof by comparing our bounds on the upper tails under null and alternative. Let $z^{\alpha}$ denote the $(1 - \alpha)$th upper quantile of the standard Normal distribution. 

For $\alpha_n = \alpha + Cn^{-1/3}$, we have from~\eqref{pf:chisquared-lb-1} that
\begin{equation*}
	\mathbb{P}_{P_0,P_0}\biggl(\mc{K}_{\varepsilon}(\mc{X}_{n_1},\mc{Y}_{n_2}) \geq 2n + z^{\alpha_n}\sigma_0\biggr) \geq \alpha.
\end{equation*}
Thus, the threshold $t_{\alpha}$ must be at least $2n + z^{\alpha_n}\sigma_0$ in order for $\varphi_{\csq}$ to be a level-$\alpha$ test. But then from~\eqref{pf:chisquared-lb-2},
\begin{equation*}
	\mathbb{P}_{P_0,Q_{\Delta}}\biggl(\mc{K}_{\varepsilon}(\mc{X}_{n_1},\mc{Y}_{n_2})\leq t_{\alpha}\biggr) \geq 1 - \frac{1 + Cn^{-1/3}}{(z^{\alpha_n} - c_1)^2}.
\end{equation*}
Noting that $\alpha_n \to \alpha$ as $n \to \infty$ and $c_1 \to 0$ as $c \to 0$ in~\eqref{eqn:spatially-localized-testing}, it follows that $\mathbb{E}_{P_0,Q_{\Delta}}{\varphi_{\csq}} \leq 2(1/z^{\alpha})^2$ for all $n$ sufficiently large and $c$ sufficiently small.

\section{Proof of Representation~\eqref{eqn:representation-graph-tv-ipm}}
\label{sec:pf-representation-graph-tv-ipm}
We recast the ratio optimization problem in a way that allows us to invoke a result of~\citet{hein2011beyond} on submodular optimization. Let $S_a(\theta) = |\theta^{\top} a|$ and notice that
\begin{equation*}
	\max_{\theta \in \Rn} R(\theta) = \max_{\theta \in \Rn} \frac{S_a(\theta)}{\|D_G\theta\|_1}.
\end{equation*}
Notice that $S_a(\theta)$ fulfills all the conditions of Theorem~3.1 of~\citet{hein2011beyond}, and therefore by that theorem
\begin{equation*}
	\min_{\theta \in \Rn} \frac{\|D_G\theta\|_1}{S_a(\theta)} = \min_{\theta \in \{0,1\}^n} \frac{\|D_G\theta\|_1}{S_a(\theta)}.
\end{equation*}
Taking the reciprocal of both sides yields the equality in~\eqref{eqn:representation-graph-tv-ipm}.

\section{Proof of Theorem~\ref{thm:graph-ks-consistency}}
\label{sec:pf-graph-ks-consistency}

Suppose $P = Q$. In Theorem~\ref{thm:graph-tv} we give finite-sample upper bounds which imply that $d_{\DTV}(\mc{X}_{n_1},\mc{Y}_{n_2}) \to 0$ as $n_1,n_2 \to \infty$. The bounds are with sufficiently high probability that an application of the Borel-Cantelli Lemma implies that the convergence is almost sure as $n_1,n_2 \to \infty$. 

Otherwise $P \neq Q$. In this case, as discussed in the main tet, our results will rely on a mode of variational convergence known as $\Gamma$-convergence, that has been used to analyze various graph functionals such as balanced cuts~\citep{garciatrillos2016consistency}. Indeed, as we discuss in Section~\ref{subsec:interpretations}, the graph TV IPM can be viewed as kind of balanced cut -- with a balancing term that takes into account the ``labels'' of the samples $\mc{X}_{n_1},\mc{Y}_{n_2}$ -- and the general structure of our proofs follows that of~\citep{garciatrillos2016consistency}.

We begin with a brief review of $\Gamma$-convergence in general, and $\Gamma$-convergence of graph total variation, since this will help set the stage for the intermediary results we need in order to prove Theorem~\ref{thm:graph-ks-consistency}.
\subsection{Review: $\Gamma$-convergence and convergence of minimizers}
Let $F_1,F_2,\ldots, F: T \to [0,\infty]$ be non-negative functionals defined on a metric space $T = (U,d)$. (Assume throughout that functionals are not identically equal to $\infty$.) Then the sequence $(F_N) = (F_N)_{N \in \mathbb{N}}$ is said to \emph{$\Gamma$-converge} to $F$, denoted $F_N \overset{\Gamma}{\to} F$, if the following two conditions are met:
\begin{itemize}
	\item {\bf Limsup inequality}. For every point $u \in T$, there exists $(u_N) \to u$ for which
	\begin{equation*}
		\limsup_{N \to \infty} F_N(u_N) \leq F(u).
	\end{equation*}
	\item {\bf Liminf inequality}. For every convergent sequence $(u_N) \to u$ in $T$, 
	\begin{equation*}
		\liminf_{N \to \infty} F_N(u_N) \geq F(u).
	\end{equation*}
\end{itemize}
$\Gamma$ convergence is fundamentally a variational form of convergence. Suppose that a sequence $F_N \overset{\Gamma}{\to} F$ additionally satisfies the following \emph{compactness} property: for every bounded sequence $u_N \in T$ for which 
$$\limsup_{N \to \infty} F_N(u_N) < \infty,
$$
the sequence $(u_N)$ is relatively compact in $T$, i.e. it convergences along subsequences. Then it follows that
\begin{equation*}
	\lim_{N \to \infty} \inf_{u \in T} F_N(u) = \min_{u \in T} F(u).
\end{equation*}
This is the \emph{fundamental theorem of $\Gamma$-convergence}~\citep{braides2006handbook}.

\subsection{Review: $\Gamma$-convergence of graph total variation}
\label{subsec:gamma-convergence-gtv}
Let $Z_1,\ldots,Z_n,Z_{n+1},\ldots$ be independent samples from a distribution $\mu$. Assume that $\mu$ supported on $\Omega$, absolutely continuous with respect to Lebesgue measure, with a continuous density $\omega$ satisfying
$$
\frac{1}{B} \leq \omega(x) \leq B, \quad \textrm{for all $x \in \Omega$,}
$$
for some $B \in [1,\infty)$. Consider the rescaled graph total variation,
$$
\GTV_{n,\varepsilon}(u) = \frac{\DTV_{n,\varepsilon}(u)}{\sigma n^2 \varepsilon^{d + 1}}.
$$ 
Under these conditions, \citet{garciatrillos2016continuum} establish that $\GTV_{n,\varepsilon}$ $\Gamma$-converges to a continuum weighted total variation. In order to make sense of this, they define (i) a common metric space in which one can compare functions defined on $\mc{Z}_n$ to functions defined over $\Omega$, and (ii) a notion of $\Gamma$-convergence for functionals involving random samples. We review both concepts. 

\paragraph{The metric space $\mathrm{TL}^1(\Omega)$.}

\begin{definition}
	The metric space $\mathrm{TL}^1(\Omega)$ consists of pairs $(\mu,u)$, where $\mu$ is a Borel probability measure and $u \in L^1(\Omega,\mu)$, and is equipped with the metric
	\begin{equation}
		\label{eqn:tl1}
		d_{\mathrm{TL}^1(\Omega)}\Bigl((\mu,u),(\mu',v)\Bigr) := \inf_{\gamma \in \Gamma(P,Q)} \int \int |x - y| + |u(x) - v(y)| \,d\pi(x,y),
	\end{equation}
	where $\Gamma$ is the set of couplings between $\mu$ and $\mu'$, that is, the set of probability measures on $\Omega \times \Omega$ for which the marginal in the first variable is given by $\mu$, and the marginal in the second variable is given by $\mu'$.
\end{definition}
Recall that a transportation map between $\mu$ and $\mu'$ is a Borel map $T: \Omega \to \Omega$ such that the \emph{push-forward} $T_{\sharp}\mu$, defined by
$$
(T_{\sharp}\mu)(S) := \mu(T^{-1}(S)), \textrm{for Borel sets $S \subseteq \Omega$}
$$
satisfies $T_{\sharp}\mu = \mu'$. A sequence of transportation maps $(T_N)$ is said to be \emph{stagnating} if $\sup_{x \in \Omega} |T_N(x) - x| \to 0$. When $\mu$ is absolutely continuous with respect to Lebesgue measure, \citet{garciatrillos2016continuum} derive the following results involving stagnating transportation maps.
\begin{itemize}
	\item A sequence $(\mu_N,u_N) \overset{\mathrm{TL}^1(\Omega)}{\to} (\mu,u)$ if and only if there exists a sequence of stagnating transportation maps $T_N$ between $\mu$ and $\mu_N$, for which additionally $\|u_N \circ T_N - u\|_{L^1(\Omega)} \to 0$.
	\item Suppose $Z_1,\ldots,Z_n$ are drawn i.i.d from a continuous density $\mu \in \mc{P}^{\infty}(d)$. Then with probability one, there exist a sequence of transportation maps $T_n$ from $\mu$ to $\mu_n$ satisfying
	\begin{equation*}
		\limsup_{n \to \infty} \frac{n^{1/d}\|\mathrm{Id} - T_n\|_{\infty}}{(\log n)^{q_d}} \leq C,
	\end{equation*}
	where we recall that $q_d = 3/2$ for $d = 2$ and $q_d = 1$ for $d \geq 3$.
\end{itemize}

\paragraph{$\Gamma$-convergence of random functionals.}
Let $\mu_n$ be the empirical measure of random samples $Z_1,\ldots,Z_n$, and suppose $F_n(\cdot)$ are functionals defined for $u$ such that $(u,\mu_n) \in \mathrm{TL}^1(\Omega)$. Then we say $F_n \overset{\Gamma}{\to} F$ if for $\mu$-almost every sequence $Z_1,\ldots,Z_{n},Z_{n + 1},\ldots$, it is the case that $F_n(\cdot) \overset{\Gamma}{\to} F$. 

\paragraph{$\Gamma$-convergence and compactness of $\GTV_{n,\varepsilon}$.}
Let $\omega$ be the density of $\mu$. Under the conditions mentioned above, \citet{garciatrillos2016continuum} show that $\GTV_{n,\varepsilon}(\cdot) \overset{\Gamma}{\to} \TV(\cdot;\omega^2)$, meaning that for $\mu$-almost every sequence $Z_1,\ldots,Z_{n},Z_{n + 1},\ldots$ the following two statements hold.
\begin{itemize}
	\item For every $u \in L^1(\Omega,\mu)$ there exists a sequence $u_n$ converging to $u$ in $\mathrm{TL}^1(\Omega)$ such that
	\begin{equation*}
		\limsup_{n \to \infty} \GTV_{n,\varepsilon}(u_n) \leq \TV(u;\omega^2).
	\end{equation*}
	\item For every $u_n \in L^1(\Omega,\mu_n)$ converging to $u \in L^1(\Omega,\mu)$ in $\mathrm{TL}^1(\Omega)$,
	\begin{equation*}
		\liminf_{n \to \infty} \GTV_{n,\varepsilon}(u_n) \geq \TV(u;\omega^2).
	\end{equation*}
\end{itemize}
In the above statements we have used ``$u_n$ converging to $u$'' to mean $(u_n,\mu_n) \overset{\mathrm{TL}^1(\Omega)}{\to} (u,\mu)$. 

Additionally \citet{garciatrillos2016continuum} show that $\GTV_{n,\varepsilon}$ satisfies the compactness property, meaning that for any bounded sequence $(u_n,\mu_n)$ that has bounded graph total variation, i.e.
\begin{equation}
	\label{eqn:gtv-compactness}
	\limsup_{n \to \infty} \|u_n\|_{L^1(\Omega;\mu_n)} < \infty, \quad \limsup_{n \to \infty} \GTV_{n,\varepsilon}(u_n) < \infty,
\end{equation}
it is the case that $(u_n,\mu_n)$ is relatively compact in $\mathrm{TL}^1(\Omega)$. 

\paragraph{Two-sample equivalent.}
The setting of~\citet{garciatrillos2016continuum} is that of a sequence of independent and identically distributed random samples, but the conclusions hold true if we suppose instead the conditions of Theorem~\ref{thm:graph-ks-consistency}. Specifically let $X_1,X_2,\ldots,X_{n_1},X_{n_1 + 1},\ldots$ be independently sampled from $P$ and $Y_1,Y_2,\ldots,Y_{n_2},Y_{n_2 + 1},\ldots$ be independently sampled from $Q$, where $(P,Q) \in \mc{P}^{\infty}(d)$, and define $\mu_n$ to be the empirical measure of $Z_{1n},\ldots,Z_{nn}$ where $Z_{in} = X_i$ for $i = 1,\ldots,n_1$ and $Z_{in} = Y_i$ for $i = n_1 + 1,\ldots,n$. Then $\GTV_{n,\varepsilon}$ still satisfies the compactness property. If additionally $\frac{n_1}{n} \to \lambda \in [0,1]$ then $\GTV_{n,\varepsilon}(\cdot) \overset{\Gamma}{\to} \TV(\cdot;\omega_{P,Q}^2)$. 

\subsection{$\Gamma$-convergence of criterion of graph TV IPM}
The definition of $\Gamma$-convergence requires functionals defined on a common metric space. In this section, we introduce functionals $E_n$ and $E$ which are related to the criteria of the graph TV IPM and density-weighted population TV IPM, but are defined on a common metric space. Then we prove $\Gamma$-convergence of $E_n$ to $E$, which ultimately leads to the convergence of the graph TV IPM.

\paragraph{The two-sample metric space $\mathrm{TL}_2^1(\Omega)$.}
For our purposes, it will be useful to introduce a two-sample analogue to $\mathrm{TL}^1(\Omega)$. In the following definition we say $u \in L^1(\Omega,\mu,\nu)$ if $u \in L^1(\Omega,\mu)$ and $u \in L^1(\Omega,\nu)$.
\begin{definition}
	The space $\mathrm{TL}_2^1(\Omega)$ consists of triplets $(\mu,\nu,u)$, where $\mu,\nu$ are Borel probability measures on $\Omega$, and $u \in L^1(\Omega,\mu,\nu)$. It is equipped with the metric
	\begin{equation}
		d_{\mathrm{TL}_2^1(\Omega)}\Bigl((\mu,\nu,u), (\mu',\nu',v)\Bigr) = d_{\mathrm{TL}^1(\Omega)}\Bigl((\mu,u),(\mu',v)\Bigr) + d_{\mathrm{TL}^1(\Omega)}\Bigl((\nu,u),(\nu',v)\Bigr).
	\end{equation}
\end{definition}
We make the following basic observations:
\begin{itemize}
	\item $(\mu_N,\nu_N,u_N) \overset{\mathrm{TL}_2^1(\Omega)}{\to} (\mu,\nu,u)$ if and only if $(\mu_N,u_N) \overset{\mathrm{TL}^1(\Omega)}{\to} (\mu,u)$ and $(\nu_N,u_N) \overset{\mathrm{TL}^1(\Omega)}{\to} (\nu,u)$.
	\item If $(\mu_N,\nu_N,u_N) \overset{\mathrm{TL}_2^1(\Omega)}{\to} (\mu,\nu,u)$ then $(\mu_N,u_N) \overset{\mathrm{TL}^1(\Omega)}{\to} (\mu,u)$ and $(\nu_N,u_N) \overset{\mathrm{TL}^1(\Omega)}{\to} (\nu,u)$.
	\item Suppose $\mu$ and $\nu$ are absolutely continuous with respect to Lebesgue measure, and suppose $(\mu_N,\nu_N,u_N) \overset{\mathrm{TL}_{2}^1(\Omega)}{\to} (\mu,\nu,u)$. Then there exist stagnating transportation maps $T_N^\mu$ between $\mu$ and $\mu_N$, and $T_N^\nu$ between $Q$ and $Q_N$. 
\end{itemize}


\paragraph{Discrete and continuum functionals.}
We now define the functionals $E_n$ and $E$ mentioned above. 

To define the discrete functional, let $B_n(u) = |P_{n_1}(u) - Q_{n_2}(u)|$ for $(u,P_{n_1},Q_{n_2}) \in\mathrm{TL}_2^1(\Omega)$; this is equivalent to the balance term in the {ratio-based formulation} of graph TV IPM except defined for $u \in L^1(\Omega,P_{n_1},Q_{n_2})$ rather than $\theta \in \Rn$. Define $L_{B_n}^1(\Omega, P_{n_1},Q_{n_2})$ to be
\begin{equation}
	L_{B_n}^1(P_n,Q_n) := \Big\{u = \frac{v}{B_n(v)}:  v \in L^1(\Omega,P_n,Q_n), B_n(v) > 0\Big\}.
\end{equation}
The functional $E_n: \mathrm{TL}_2^1(\Omega) \to [0,\infty]$ is defined as
\begin{equation}
	E_n(u, P_{n_1}, Q_{n_2}) := 
	\begin{dcases}
		\GTV_{n,\varepsilon_n}(u),& \quad \textrm{$u_n \in L_{B_n}^1(P_{n_1}, Q_{n_2})$}, \\
		\infty,& \quad \textrm{otherwise.}
	\end{dcases}
\end{equation}
To define the continuum functional, let $B(u) = |P(u) - Q(u)|$ for $u \in L^1(\Omega,P,Q)$. Define $L_B^1(\Omega,P,Q)$ to be
\begin{equation}
	L_{B}^1(\Omega,P,Q) := \Big\{u = \frac{v}{B(v)}:  v \in L^1(\Omega,P,Q), B(v) > 0\Big\},
\end{equation}
and define the functional $E: \mathrm{TL}_2^1(\Omega) \to [0,\infty]$ to be
\begin{equation}
	E(u, P, Q) := 
	\begin{dcases}
		\mathrm{TV}(u;\omega_{P,Q}^2),& \quad \textrm{$u \in L_{B}^1(\Omega,P,Q)$}, \\
		\infty,& \quad \textrm{otherwise.}
	\end{dcases}
\end{equation}
We will write $E_n(u_n) = E_n(u_n,P_{n_1},Q_{n_2})$ and $E(u) = E(u,P,Q)$ when the measures are clear from context.

The definitions of $E_n,E$ may appear somewhat strange at first. The reason we define them this way -- that is, the reason we define them to be equal to $\GTV_{n,\varepsilon}(u)$ for functions $u$ that are normalized by a balance term $B_n(u)$, rather than in terms of the ratio $\GTV_{n,\varepsilon}(u)/B_n(u)$ -- is so that we can make use of the $\Gamma$-convergence of $\GTV_{n,\varepsilon}$ to $\TV(\cdot,\omega^2)$, and the compactness of $\GTV_{n,\varepsilon}$, in as straightforward a way as possible. A similar device was employed in~\citet{garciatrillos2016consistency} to analyze the convergence of various balanced cut functionals on graphs.

\paragraph{$\Gamma$-convergence of $E_n$.}

\begin{proposition}
	\label{prop:gamma-convergence}
	Under the conditions of Theorem~\ref{thm:graph-ks-consistency}, $E_n \overset{\Gamma}{\to} E$, meaning that for $P$-almost every $X_1,\ldots,X_{n_1},X_{n_1 + 1},\ldots$ and $Q$-almost every $Y_1,\ldots,Y_{n_2},Y_{n_2 + 1},\ldots$, the following two statements hold.
	\begin{itemize}
		\item For every $u \in L^1(\Omega,P,Q)$, there exists $u_n$ converging to $u$ in $\mathrm{TL}_2^1(\Omega)$, such that
		\begin{equation}
			\label{eqn:limsup}
			\limsup_{n \to \infty} E_n(u_n) \leq E(u).
		\end{equation}
		\item For every $u \in L^1(\Omega,P,Q)$, and all $u_n \in L^1(\Omega,P_{n_1},Q_{n_2})$ converging to $u$ in $\mathrm{TL}_2^1(\Omega)$, 
		\begin{equation}
			\label{eqn:liminf}
			\liminf_{n \to \infty} E_n(u_n) \geq E(u).
		\end{equation}
	\end{itemize} 
\end{proposition}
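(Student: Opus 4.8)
\emph{Proof strategy.} The plan is to deduce both inequalities from the known $\Gamma$-convergence $\GTV_{n,\varepsilon_n}\overset{\Gamma}{\to}\TV(\cdot;\omega_{P,Q}^2)$ and the compactness of $\GTV_{n,\varepsilon_n}$ recalled in Section~\ref{subsec:gamma-convergence-gtv}, with the balance normalization handled by $1$-homogeneity. We work on the intersection of the three full-probability events on which (i) $\GTV_{n,\varepsilon_n}\overset{\Gamma}{\to}\TV(\cdot;\omega_{P,Q}^2)$, (ii) $\GTV_{n,\varepsilon_n}$ is compact, and (iii) stagnating transportation maps $T_n^P:P\to P_{n_1}$ and $T_n^Q:Q\to Q_{n_2}$ of the stated rate exist. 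Two preliminary observations drive the argument. First, $L_{B}^1(\Omega,P,Q)=\{u\in L^1(\Omega,P,Q):B(u)=1\}$ and $L_{B_n}^1(\Omega,P_{n_1},Q_{n_2})=\{u:B_n(u)=1\}$, since any $v/B(v)$ has balance $1$ by linearity of $u\mapsto P(u)$ and $u\mapsto Q(u)$; hence $E$ equals $\TV(\cdot;\omega_{P,Q}^2)$ on $\{B=1\}$ and $+\infty$ off it, and likewise for $E_n$. Second, if $(u_n,P_{n_1},Q_{n_2})\to(u,P,Q)$ in $\mathrm{TL}_2^1(\Omega)$, then $P_{n_1}(u_n)\to P(u)$, $Q_{n_2}(u_n)\to Q(u)$, and hence $B_n(u_n)\to B(u)$: indeed $|P_{n_1}(u_n)-P(u)|=\big|\int(u_n\circ T_n^P-u)\,dP\big|\le\|u_n\circ T_n^P-u\|_{L^1(P)}\to0$ by the transportation-map characterization of $\mathrm{TL}^1$-convergence, and similarly for $Q$. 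A companion observation, bridging the two-sample space used here to the one-sample space in which the $\GTV$ results are stated, is that $(u_n,P_{n_1},Q_{n_2})\to(u,P,Q)$ in $\mathrm{TL}_2^1(\Omega)$ implies $(u_n,\rho_n)\to(u,\rho)$ in $\mathrm{TL}^1(\Omega)$, where $\rho_n=\tfrac{n_1}{n}P_{n_1}+\tfrac{n_2}{n}Q_{n_2}$ and $\rho$ has density $\omega_{P,Q}$; one builds this coupling by mixing the couplings induced by $T_n^P$ and $T_n^Q$ with weights $\tfrac{n_1}{n},\tfrac{n_2}{n}$, absorbing the discrepancy $\tfrac{n_1}{n}\to\lambda$ into an error tending to $0$.

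For the limsup inequality~\eqref{eqn:limsup} we may assume $E(u)<\infty$, so $B(u)=1$ and $\TV(u;\omega_{P,Q}^2)<\infty$ (otherwise the inequality is vacuous). Applying the $\Gamma$-limsup inequality for $\GTV_{n,\varepsilon_n}$ at $u$ yields $u_n$ with $\limsup_n\GTV_{n,\varepsilon_n}(u_n)\le\TV(u;\omega_{P,Q}^2)$; since the recovery sequence may be taken as the restriction to $\mc{Z}_n$ of a fixed Lipschitz $L^1(\Omega)$-approximant of $u$ (followed by a diagonal argument), it converges to $u$ in $\mathrm{TL}^1$ with respect to \emph{every} $\mu\in\mc{P}^{\infty}(d)$, hence in $\mathrm{TL}_2^1(\Omega)$. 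By the second observation $B_n(u_n)\to B(u)=1$, so for $n$ large $\wt{u}_n:=u_n/B_n(u_n)\in L_{B_n}^1$, still converges to $u$ in $\mathrm{TL}_2^1(\Omega)$, and by $1$-homogeneity $E_n(\wt{u}_n)=\GTV_{n,\varepsilon_n}(u_n)/B_n(u_n)$, whence $\limsup_n E_n(\wt{u}_n)\le\TV(u;\omega_{P,Q}^2)=E(u)$.

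For the liminf inequality~\eqref{eqn:liminf}, given $u_n\to u$ in $\mathrm{TL}_2^1(\Omega)$ pass to a subsequence realizing $\liminf_n E_n(u_n)$ and assume it finite. Along it $u_n\in L_{B_n}^1$, so $B_n(u_n)=1$ and $E_n(u_n)=\GTV_{n,\varepsilon_n}(u_n)$; by the second observation $B(u)=\lim_n B_n(u_n)=1$, so $u\in L_B^1$ and $E(u)=\TV(u;\omega_{P,Q}^2)$. By the bridging observation $(u_n,\rho_n)\to(u,\rho)$ in $\mathrm{TL}^1(\Omega)$, so the $\Gamma$-liminf inequality for $\GTV_{n,\varepsilon_n}$ gives $\liminf_n E_n(u_n)=\liminf_n\GTV_{n,\varepsilon_n}(u_n)\ge\TV(u;\omega_{P,Q}^2)=E(u)$, as required.

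I expect the main obstacle to be the bridging observation — transferring $\mathrm{TL}_2^1$-convergence to $\mathrm{TL}^1$-convergence with respect to the combined-sample empirical measure $\rho_n$ — including the bookkeeping forced by $\tfrac{n_1}{n}$ only tending to $\lambda$, together with the related point that the $\GTV$ recovery sequence must be realized concretely enough (as a restriction of a fixed $L^1(\Omega)$-approximant) to converge in $\mathrm{TL}_2^1(\Omega)$ and not merely with respect to $\rho_n$. The remaining ingredients — continuity of the balance functionals along $\mathrm{TL}_2^1$-convergent sequences, and $1$-homogeneity of $\GTV_{n,\varepsilon_n}$ and $\TV(\cdot;\omega_{P,Q}^2)$ — are routine.
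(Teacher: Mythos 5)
Your proof is correct and follows essentially the same route as the paper's: reduce both inequalities to the known $\Gamma$-convergence of $\GTV_{n,\varepsilon}$ to $\TV(\cdot;\omega_{P,Q}^2)$, using continuity of the balance functionals along $\mathrm{TL}_2^1$-convergent sequences and $1$-homogeneity to handle the normalization. Two small notes. First, your observation that $L_B^1=\{u:B(u)=1\}$ is a clean simplification that the paper states only implicitly, and your ``bridging'' step — passing from $\mathrm{TL}_2^1$-convergence to $\mathrm{TL}^1$-convergence with respect to the combined empirical measure $\rho_n$ — makes explicit something the paper's liminf proof quietly relies on when it invokes the $\GTV$ $\Gamma$-liminf; spelling it out is a genuine improvement in rigor. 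Second, a minor slip: for the limsup you cannot simply discard the case $E(u)=\infty$ as ``vacuous,'' since the statement still requires you to exhibit \emph{some} $u_n\to u$ in $\mathrm{TL}_2^1(\Omega)$; the paper handles this by taking restrictions of Lipschitz approximants (a diagonal argument), which is exactly the device you already invoke, so the fix is immediate, but the clause should be stated rather than dismissed.
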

In the above statements we have used ``$u_n$ converging to $u$'' to mean $(u_n,P_{n_1},Q_{n_2}) \overset{\mathrm{TL}^1(\Omega)}{\to} (u,P,Q)$. 

\begin{proof}
	First we are going to prove the limsup inequality, then the liminf inequality. Throughout, we assume the existence of stagnating transportation maps $T_{n_1}^{P}$ from $P$ to $P_{n_1}$, and $T_{n_2}^{Q}$ from $Q$ to $Q_{n_2}$, keeping in mind that these exist with probability one.
	
	\textit{\underline{Limsup inequality.}} 
	As is standard, we are going to prove the limsup inequality~\eqref{eqn:limsup} for every $u$ in a dense subset of $L^1(\Omega,P,Q)$, in particular every Lipschitz function $u$. This implies the result holds for all $u \in L^1(\Omega,P,Q)$ by a diagonal argument.
	
	Suppose $u \not\in L_{B}^1(P, Q)$. Then taking $u_n$ to be the restriction of $u$ to $\mc{Z}_{n}$, we have
	\begin{equation*}
		\|u_n \circ T_n^P - u\|_{L^1(\Omega)} \leq C \max_{x \in \Omega} |T_n^P(x) - x|,
	\end{equation*}
	where $C$ depends on $\Omega$ and the Lipschitz constant of $u$. The same holds for $T_n^Q$. Since $T_n^P$ and $T_n^Q$ are stagnating, it follows that $u_n$ converges to $u$ in $\mathrm{TL}_2^1(\Omega)$. Obviously,
	\begin{equation*}
		\limsup_{n \to \infty} E_n(u_n) \leq \infty = E(u),
	\end{equation*}
	and this proves the claim for $u \not\in L_{B}^1(P,Q)$.
	
	On the other hand, if $u \in L_{B}^1(P,Q)$ then $u = v/B(v)$ for some $v \in L^1(\Omega,P,Q)$. Note that $v$ must also be a Lipschitz function, since $u$ is Lipschitz and
	\begin{equation*}
		B(v) \leq C\big(\|p\|_{L^\infty(\Omega)} + \|q\|_{L^{\infty}(\Omega)}\big)\|v\|_{L^1(\Omega)} < \infty.
	\end{equation*}
	Then, taking $v_n$ to be the restriction of $v$ to $Z_{1:n}$, the same argument as above shows that $v_n$ converges to $v$ in $\mathrm{TL}_{2}^1(\Omega)$. It follows that $P_{n_1}(v_n) \to P(v), Q_{n_2}(v_n) \to Q(v)$, $B_n(v_n) \to B(v)$, and
	\begin{equation*}
		\limsup_{n \to \infty} \mathrm{GTV}_{n,\varepsilon}(v_n) \leq \mathrm{TV}(v;\omega_{P,Q}^2).
	\end{equation*} 
	Now take $u_n = v_n/B_n(v_n)$, which is well-defined for all $n$ sufficiently large, since $B(v_n) \to B(v) > 0$. Then $u_n \in L_{B_n}^1(P_{n_1},Q_{n_2})$, $u_n$ converges to $u$ in $\mathrm{TL}_2^1(\Omega)$, and 
	\begin{equation*}
		\limsup_{n \to \infty} E_n(u_n) = \limsup_{n \to \infty} \frac{ \mathrm{GTV}_{n,\varepsilon}(v_n)}{B_n(v_n)} \leq \frac{\mathrm{TV}(v;\omega_{P,Q}^2)}{B(v)} = E(u).
	\end{equation*} 
	
	\textit{\underline{Liminf inequality.}} 
	To start, suppose $u \not\in L_{B}^1(P,Q)$. If $u_n$ converges to $u$ in $\mathrm{TL}_2^1(\Omega)$ then
	\begin{equation*}
		B_n(u_n) \to B(u) \neq 1,
	\end{equation*} 
	and so for all $n$ sufficiently large, $B_n(u_n) \neq 1$. Consequently,
	\begin{equation*}
		\liminf_{n \to \infty} E_n(u_n) = \infty = E(u).
	\end{equation*}
	Otherwise $u \in L_{B}^1(P,Q)$, and
	\begin{equation*}
		\liminf_{n \to \infty} E_n(u_n) = \liminf_{n \to \infty} \mathrm{GTV}_{n,\varepsilon_n}(u_n) \geq \mathrm{TV}(u;\omega_{P,Q}^2) = E(u).
	\end{equation*}
\end{proof}
\paragraph{Compactness of $E_n$.}
In what follows let $\mathrm{mean}_n(u) := \frac{1}{n}\sum_{i = 1}^{n} u_i$ be the sample average of $u$, and say $u$ is \emph{mean-zero} if $\mathrm{mean}_n(u) = 0$.
\begin{proposition}
	\label{prop:compactness}
	Under the conditions of Theorem~\ref{thm:graph-ks-consistency}, the sequence of functionals $(E_n)$ is \emph{precompact}, meaning that every mean-zero sequence $(u_n)$ for which $\sup_{n} E_n(u_n) < \infty$ is relatively compact in $\mathrm{TL}_1^2(\Omega)$.
\end{proposition}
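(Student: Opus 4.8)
The plan is to deduce the claim from the $\Gamma$-convergence and compactness of the graph total variation $\GTV_{n,\varepsilon}$ recorded in Section~\ref{subsec:gamma-convergence-gtv}. First note the reduction: by definition of $E_n$, the hypothesis $\sup_n E_n(u_n)<\infty$ forces $u_n\in L_{B_n}^1(\Omega,P_{n_1},Q_{n_2})$ (so $B_n(u_n)=1$) and, on this set, $E_n(u_n)=\GTV_{n,\varepsilon_n}(u_n)$; hence $\kappa:=\sup_n\GTV_{n,\varepsilon_n}(u_n)<\infty$. The compactness of $\GTV_{n,\varepsilon}$ requires \emph{two} inputs, a uniform bound on $\|u_n\|_{L^1(\Omega;\mu_n)}$ and a uniform bound on $\GTV_{n,\varepsilon_n}(u_n)$, of which only the second is immediate. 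So the overall plan is: (i) use the mean-zero constraint to supply the missing $L^1$ bound (a graph Poincar\'e inequality); (ii) invoke the compactness of $\GTV_{n,\varepsilon}$ to get a $\mathrm{TL}^1(\Omega)$-convergent subsequence with respect to the combined empirical measure $\mu_n$; (iii) upgrade that to convergence in $\mathrm{TL}_2^1(\Omega)$.

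For step (i), I would argue by contradiction. If $\|u_n\|_{L^1(\Omega;\mu_n)}$ is not bounded, pass to a subsequence along which $M_n:=\|u_n\|_{L^1(\Omega;\mu_n)}\to\infty$; since $B_n(u_n)=1$ one has $M_n\geq\tfrac{n_1}{n}|P_{n_1}(u_n)|$ and in fact $M_n$ stays bounded away from $0$, so $w_n:=u_n/M_n$ is well-defined and satisfies $\|w_n\|_{L^1(\Omega;\mu_n)}=1$, $\mathrm{mean}_n(w_n)=0$, and $\GTV_{n,\varepsilon_n}(w_n)=\GTV_{n,\varepsilon_n}(u_n)/M_n\leq\kappa/M_n\to0$. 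Now $(w_n)$ is a bounded sequence of bounded graph total variation, so by the compactness of $\GTV_{n,\varepsilon}$ a further subsequence has $(w_n,\mu_n)\overset{\mathrm{TL}^1(\Omega)}{\to}(w,\mu)$ for some $w\in L^1(\Omega,\mu)$, where $\mu$ is the Lebesgue-continuous mixture-density measure. Using a stagnating transport map realizing this convergence, $\|w\|_{L^1(\mu)}=\lim_n\|w_n\|_{L^1(\mu_n)}=1$ and $\int_\Omega w\,d\mu=\lim_n\mathrm{mean}_n(w_n)=0$, while the liminf inequality for $\GTV_{n,\varepsilon}$ gives $\TV(w;\omega_{P,Q}^2)\leq\liminf_n\GTV_{n,\varepsilon_n}(w_n)=0$. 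Since $\omega_{P,Q}^2$ is bounded below by a positive constant, this means $w$ has vanishing distributional gradient, hence is a.e.\ constant on the connected set $\Omega$; combined with $\int_\Omega w\,d\mu=0$ this forces $w\equiv0$, contradicting $\|w\|_{L^1(\mu)}=1$. Therefore $\sup_n\|u_n\|_{L^1(\Omega;\mu_n)}<\infty$.

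With both bounds in hand, the compactness of $\GTV_{n,\varepsilon}$ yields a subsequence (not relabeled) with $(u_n,\mu_n)\overset{\mathrm{TL}^1(\Omega)}{\to}(u,\mu)$ for some $u\in L^1(\Omega,\mu)$, and it remains to show $(u_n,P_{n_1})\overset{\mathrm{TL}^1(\Omega)}{\to}(u,P)$ and $(u_n,Q_{n_2})\overset{\mathrm{TL}^1(\Omega)}{\to}(u,Q)$ with the \emph{same} limit $u$, i.e.\ $(P_{n_1},Q_{n_2},u_n)\overset{\mathrm{TL}_2^1(\Omega)}{\to}(P,Q,u)$. This upgrade is where I expect the real work: $\mathrm{TL}^1$-convergence with respect to a combined empirical measure does not in general descend to its two sub-empirical measures. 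The route I would take is to inspect the proof of the compactness of $\GTV_{n,\varepsilon}$ in \citet{garciatrillos2016continuum}, which constructs continuum interpolants $\widetilde u_n\in\BV(\Omega)$ (obtained by extending $u_n$ off $\mc{Z}_n$ and mollifying at scale $\varepsilon_n$) with $\sup_n(\|\widetilde u_n\|_{L^1(\Omega)}+\TV(\widetilde u_n))<\infty$ and $d_{\mathrm{TL}^1(\Omega)}((u_n,\mu_n),(\widetilde u_n,\mathrm{Leb}))\to0$, so that $\BV$-compactness extracts $\widetilde u_{n_k}\to u$ in $L^1(\Omega)$. Because $P$ and $Q$ have densities bounded above ($p\leq B/\lambda$, $q\leq B/(1-\lambda)$, using $\mu\leq B$, $\mu\geq\lambda p$, $\mu\geq(1-\lambda)q$, and $\lambda\in(0,1)$), all of these $L^1(\Omega)$-type estimates transfer to $P$ and $Q$, giving $d_{\mathrm{TL}^1(\Omega)}((u_{n_k},P_{n_1(k)}),(\widetilde u_{n_k},P))\to0$ and $\|\widetilde u_{n_k}-u\|_{L^1(P)}\leq(B/\lambda)\|\widetilde u_{n_k}-u\|_{L^1(\Omega)}\to0$, and likewise for $Q$; a triangle inequality then yields the desired $\mathrm{TL}_2^1(\Omega)$ convergence. (As a consistency check, passing to the limit in $B_n(u_n)=1$ and $\mathrm{mean}_n(u_n)=0$ shows $B(u)=1$ and $\int_\Omega u\,d\mu=0$, so the limit lies in the effective domain of $E$; this is not needed for precompactness.) The hard part will be making the "$L^1(\Omega)$-estimates are robust to the choice of reference density among $\mu$, $p$, $q$" claim precise; an equivalent but perhaps cleaner alternative is to re-run the entire compactness argument of \citet{garciatrillos2016continuum} directly in $\mathrm{TL}_2^1(\Omega)$, tracking $P_{n_1}$ and $Q_{n_2}$ in parallel.
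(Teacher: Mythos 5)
Your argument is correct in its overall structure, and it reaches the $L^1$ bound by a genuinely different route than the paper: you prove a ``soft'' Poincar\'e inequality by contradiction (normalize, extract a $\mathrm{TL}^1$-limit via $\GTV$-compactness, observe the weighted TV of the limit vanishes, and conclude the limit is an a.e.\ constant killed by the mean-zero constraint), whereas the paper obtains a quantitative Poincar\'e directly by combining the Hein--B\"{u}hler ratio-cut representation with the asymptotic consistency of the ratio cut proved by Garc\'{i}a Trillos and Slep\v{c}ev. Both are standard and both work; the paper's route is shorter to write because it cites two external facts, while yours is more self-contained and only reuses ingredients (limsup/liminf inequalities, compactness) that have to be on hand anyway. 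One small sloppiness in your step (i): the lower bound $M_n\geq\tfrac{n_1}{n}\lvert P_{n_1}(u_n)\rvert$ alone does not keep $M_n$ away from zero if $\lvert P_{n_1}(u_n)\rvert$ happens to be small; you should argue from $\lvert P_{n_1}(u_n)-Q_{n_2}(u_n)\rvert=1$ that at least one of $\lvert P_{n_1}(u_n)\rvert,\lvert Q_{n_2}(u_n)\rvert$ is $\geq1/2$ and use $n_1/n\to\lambda\in(0,1)$, $n_2/n\to1-\lambda$.

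On the $\mathrm{TL}^1$-versus-$\mathrm{TL}_2^1$ point: you are right that this is a real step, and the paper's proof glosses over it. The paper verifies precisely the two hypotheses of the $\GTV$-compactness result, which by its statement yields relative compactness of $(u_n,\mu_n)$ in $\mathrm{TL}^1(\Omega)$ --- that is, with respect to the \emph{combined} empirical measure --- and then asserts this ``implies the claim'' of $\mathrm{TL}_2^1(\Omega)$ compactness without further argument. Your observation that one cannot simply restrict a $\mathrm{TL}^1(\mu_n)$-convergent coupling to the sub-sample $\cX_{n_1}$ (the marginal of the restriction on the continuum side is some sub-probability of $\mu$, not $P$) is exactly the obstruction. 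The route you sketch --- pass through the Garc\'{i}a Trillos--Slep\v{c}ev continuum interpolants $\wt{u}_n$, use the uniform $\BV$ bound and a $\BV$ translation estimate to control $\|\wt{u}_n\circ T^P_n-\wt{u}_n\|_{L^1}$, and transfer all $L^1$ estimates between $\mu$, $P$, $Q$ via the two-sided density bounds --- is the right idea and essentially the only clean way through; the more naive attempt of bounding $\int\lvert u_n\circ T^P_n-u_n\circ T_n\rvert\,dP$ directly by $\tfrac{1}{n}\DTV_{n,\varepsilon_n}(u_n)$ fails because $n\varepsilon_n^{d+1}$ need not vanish for all admissible $\varepsilon_n$. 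So while you flag the details as unfinished, the skeleton is sound, and as a bonus you have identified a place where the paper's proof is terser than it should be.
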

\begin{proof}
	We will establish that any sequence $(u_n)$ satisfying the conditions of the theorem also satisfies~\eqref{eqn:gtv-compactness}, which implies the claim.
	
	First of all, since $E_n(u) \geq \GTV_{n,\varepsilon}(u)$ it follows that $\sup_{n} \GTV_{n,\varepsilon}(u) < \infty$.
	
	To upper bound the $L^1$-norm of $u$ we make use of the following pair of results involving the ratio cut functional. The first is a representation result due to~\cite{hein2011beyond}:
	\begin{equation*}
		\sup_u \; \frac{\GTV_{n,\varepsilon}(u)}{\|u - \mathrm{mean}_n(u)\|_{L^1(\Omega;\mu_n)}} = \sup_{S} \frac{\GTV_{n,\varepsilon}(\1_S)}{\|u - \mathrm{mean}_n(\1_S)\|_{L^1(\Omega;\mu_n)}}.
	\end{equation*}
	The right hand side of the above expression is the \emph{ratio cut}. \citet{garciatrillos2016continuum} show that the ratio cut is asymptotically consistent, under conditions analogous to those of Theorem~\ref{thm:graph-ks-consistency}:
	\begin{equation*}
		\inf_{S} \frac{\GTV_{n,\varepsilon}(\1_S)}{\|u - \mathrm{mean}_n(\1_S)\|_{L^1(\Omega;\mu_n)}} \to \inf_{A} \frac{\TV(\1_S;\omega_{P,Q}^2)}{2|A||A^c|}.
	\end{equation*}
	The results of~\citet{garciatrillos2016consistency} assume i.i.d samples, but are straightforwardly adapted to our two-sample setting, see the discussion in Section~\ref{subsec:gamma-convergence-gtv}.
	
	Taken together, we have that for any mean-zero sequence $u_n$ for which $\sup_{n} E_n(u_n) < \infty$,
	\begin{align}
		\limsup_{n \to \infty} \|u_n\|_{L^1(\Omega;\mu_n)} & \leq \limsup_{n \to \infty} \GTV_{n,\varepsilon}(u_n) \times \Big(\inf_{A} \frac{\TV(\1_S;\omega_{P,Q}^2)}{2|A||A^c|}\Big)^{-1} < \infty.
	\end{align}
	
\end{proof}

\subsection{Completing the proof of Theorem~\ref{thm:graph-ks-consistency}}
Let us make an explicit connection between the discrete functionals $E_n(\cdot)$ and $d_{\DTV}(\cdot,\cdot)$, and the continuum functionals $E(\cdot)$ and $d_{\BV}(\cdot,\cdot)$, by observing that
$$
\sigma n^2 \varepsilon^{d + 1} d_{\DTV}(\mc{X}_{n_1}, \mc{Y}_{n_2}) = \Big(\min E_n(u_n) \Big)^{-1}, \quad d_{\BV}(P,Q;\omega_{P,Q}^2) = \Big(\min E(u)\Big)^{-1}.
$$
Thus we can complete the proof of Theorem~\ref{thm:graph-ks-consistency} by establishing that $\min E_n(u_n) \to \min E(u)$, which we do using the results of Propositions~\ref{prop:gamma-convergence} and~\ref{prop:compactness}.

Let $f^{\ast}$ be a witness of $d_{\BV}(P,Q;\omega_{P,Q}^2)$, and let $u^{\ast} = f^{\ast}/B(f^{\ast})$. Then $u^{\ast}$ is a minimizer of $E(u)$. Observe that $E(u^{\ast}) < \infty$ as $d_{\BV}(P,Q;\omega_{P,Q}^2) > 0$. From the limsup inequality of Proposition~\ref{prop:gamma-convergence}, there exists a sequence $(u_n)$ for which
$$
\limsup_{n \to \infty} E_n(u_n) \leq E(u^{\ast}).
$$ 
Now, let $u_n^{\ast} = \argmin E_n(u_n)$. We may as well take $u_n^{\ast}$ to be mean-zero since $E_n$ is invariant under shifts, $E_n(u - c) = E_n(u)$ for any $c \in \Reals$. Since additionally
$$
\limsup_{n \to \infty} E_n(u_n^{\ast}) \leq \limsup_{n \to \infty} E_n(u_n) < E(u^{\ast}) < \infty,
$$
we conclude that $u_n^{\ast}$ is relatively compact in $\mathrm{TL}_2^1(\Omega)$ by Proposition~\ref{prop:compactness}. 

Let us assume without loss of generality that $u_n^{\ast} \to u$ in $\mathrm{TL}_2^1(\Omega)$ (otherwise, work along subsequences). In that case, it follows from the liminf inequality of Proposition~\ref{prop:gamma-convergence} that
$$
\liminf_{n \to \infty} E_n(u_n^{\ast}) \geq E(u) \geq E(u^{\ast}).
$$ 
So the limit of $E_n(u_n^{\ast})$ exists and is equal to $E(u^{\ast})$, which is the desired result.

\section{Technical results}
\label{sec:technical-results}

\subsection{Total variation and total variation IPM}
\label{subsec:total-variation}

In this section we collect a number of useful facts about TV and TV IPM. Throughout we assume that $(P,Q) \in \mc{P}^{\infty}(d)$ meaning in particular that densities $p,q$ exist and furthermore that $p,q \in L^{\infty}(\Rd)$.

\paragraph{Basic properties of total variation.}
Total variation satisfies the coarea formula
$$
\TV(f) = \int_{-\infty}^{\infty} \per(\{f > t\}) \,dt.
$$
Additionally, total variation is lower semi-continuous: if $f_k \to f$ weakly in $L^p(\Rd)$, for any $1 \leq p < \infty$, then
$$
\TV(f) \leq \liminf_{k \to \infty} \TV(f_k).
$$
Given a domain $\Omega \subseteq \Rd$, the total variation of $f \in L^1(\Omega)$ is defined as
$$
\TV(f;\Omega) := \sup\Bigl\{\int_{\Omega} f \cdot \mathrm{div}(\phi): \phi \in C_c^{\infty}(\Omega), 0 \leq \|\phi(x)\|_2 \leq 1~~\textrm{for all $x \in \Omega$}\Bigr\}. 
$$
It follows that $\TV(f;\Omega) \leq \TV(f)$.

\paragraph{Sobolev and isoperimetric inequalities.}
We record a Sobolev inequality for $f \in \BV(\Rd)$, as given in Theorem 14.33 of~\citet{leoni2017first}: there exists a constant $C$ depending only on $d$ such that
\begin{equation}
	\label{eqn:sobolev-inequality}
	\|f\|_{L^{d/(d - 1)}(\Rd)} \leq C \cdot \TV(f).
\end{equation}
This Sobolev inequality can be used to prove an isoperimetric inequality that we will also need. Let $A \subseteq \Rd$ be a set of finite perimeter, and take $A^c = \Rd \setminus A$. Then either $A$ or $A^c$ has finite Lebesgue measure, and there exists a constant $C$ depending only on $d$ such that
\begin{equation}
	\label{eqn:isoperimetric-inequality}
	\min\Bigl\{\nu(A), \nu(A^c)\Bigr\}^{(d - 1)/d} \leq C \cdot \per(A).
\end{equation}
See Theorem 14.44 of~\citet{leoni2017first}.

\paragraph{Finiteness of TV IPM.}
Here we show that the the population TV IPM is finite for $(P,Q) \in \mc{P}^{\infty}(d)$. Applying H\"{o}lder's inequality, the Sobolev inequality~\eqref{eqn:sobolev-inequality}, and then using the fact that $p \in L^{\infty}(\Rd)$ and $\int p(x) \,dx = 1$,
\begin{equation}
	\label{eqn:tv-ipm-finite}
	\begin{aligned}
		\int f(x) p(x) \,dx 
		& \leq \Bigl(\int f(x)^{d/(d - 1)}\,dx\Bigr)^{(d - 1)/d} \Bigl(\int p(x)^{d}\,dx\Bigr)^{1/d} \\
		& \leq C \hspace{1 pt} \TV(f) \Bigl(\int p(x)^{d}\,dx\Bigr)^{1/d} \\
		& \leq C \hspace{1 pt} \TV(f) \|p\|_{L^{\infty}(\Rd)}^{(d - 1)/d}.
	\end{aligned}
\end{equation}
The same inequality holds with respect to $q$.

\paragraph{Existence of maximizer of TV IPM.}
We show here that there exists a (not necessarily unique) function $f^{\ast}$ for which $\TV(f^{\ast}) \leq 1$ and which achieves the maximum value of the criterion of $d_{\BV}(P,Q)$, that is 
\begin{equation}
	\label{eqn:dual-tv-norm}
	\int \bigl(p(x) - q(x)\bigr) f^{\ast}(x) \,dx = \sup\Bigl\{ \int \bigl(p(x) - q(x)\bigr) f(x) \,dx: \TV(f) \leq 1 \Bigr\}.
\end{equation}
Consider a maximizing sequence of~\eqref{eqn:dual-tv-norm}: that is, a sequence of integrable $f_k$ for which $\TV(f_k) \leq 1, k = 1,2,\ldots$ and 
$$
\lim_{k \to \infty} \int f_k(x) \bigl(p(x) - q(x)\bigr)  \,dx = d_{\BV}(P,Q).
$$ 
It follows from $\TV(f_k) \leq 1$ and the Sobolev inequality~\eqref{eqn:sobolev-inequality} that $(f_k)_{k \in \mathbb{N}}$ is bounded in $L^{d/(d- 1)}(\Rd)$.  This means there exists a subsequence (also denoted by $f_k$) which converges weakly to an element $f \in L^{d/(d- 1)}(\Rd)$, meaning 
$$
\int f_k(x) g(x) \,dx \to \int f(x) g(x) \,dx, \quad \textrm{for all $g \in L^d(\Rd)$.}
$$
However by the lower semi-continuity of $\TV$ we have that $\liminf_{k \to \infty} \TV(f_k) \geq \TV(f)$. Noting that $p - q \in L^d(\Rd)$ (see~\eqref{eqn:tv-ipm-finite}), we have shown the desired result.

\paragraph{Characteristic property of TV IPM.}
The TV IPM is characteristic, meaning $d_{\BV}(P,Q) = 0$ only if $P = Q$. This can be seen by the following chain of implications:
\begin{align*}
	& \int \bigl(p(x) - q(x)\bigr)f(x) \,dx = 0, \quad \textrm{for all $f \in \BV(\Rd)$} \\
	& \Longrightarrow \int \bigl(p(x) - q(x)\bigr)f(x) \,dx = 0, \quad \textrm{for all $f \in C_c^{\infty}(\Rd)$} \\
	&  \Longrightarrow \int \bigl(p(x) - q(x)\bigr)f(x) \,dx = 0, \quad \textrm{for all $f \in C_c(\Rd)$},
\end{align*}
with the second implication following by density of $C_c^{\infty}(\Rd)$ in $C_c(\Rd)$. By the Riesz-Markov-Kakutani Theorem this identifies $P - Q$ with $0$.

\subsection{Concentration inequalities}
\label{subsec:concentration}

\paragraph{Hoeffding's inequality.}  Let $U_1,\ldots,U_n$ be independent random variables with mean $\mathbb{E}[U_i] = \mu$ and for which $|U_i| \leq b$ with probability $1$. Then for any $t \geq 0$,
\begin{equation*}
	\mathbb{P}\biggl(\Bigl|\sum_{i = 1}^{n}U_i - n\mu\Bigr| \geq t \biggr) \leq 2\exp\Bigl(-\frac{t^2}{2nb^2}\Bigr),
\end{equation*}
and it follows that with probability at least $1 - \delta$,
\begin{equation}
	\label{eqn:hoeffding}
	\Bigl|\sum_{i = 1}^{n}U_i - n\mu\Bigr| \leq b\sqrt{2 n \log(2/\delta)}.
\end{equation}

\paragraph{Multiplicative Chernoff bound.}
Let $U_1,\ldots,U_n$ be independent random variables with mean $\sum_{i = 1}^{n}\mathbb{E}[U_i] = n\mu$ and for which $|U_i| \leq 1$ with probability $1$. Then for any $0 \leq t \leq 1$,
\begin{equation*}
	\mathbb{P}\biggl(\Bigl|\sum_{i = 1}^{n}U_i - n\mu\Bigr| \geq t n \mu \biggr) \leq 2\exp\Bigl(-\frac{t^2n\mu}{3}\Bigr),
\end{equation*}
and it follows that with probability at least $1  - \delta$,
\begin{equation}
	\label{eqn:multiplicative-chernoff}
	\Bigl|\sum_{i = 1}^{n}U_i - n\mu\Bigr| \leq \sqrt{3 \log(2/\delta) n \mu}.
\end{equation}

\paragraph{Bernstein's inequality.} Let $U_1,\ldots,U_n$ be independent random variables with mean $\mathbb{E}[U_i] = 0$, variance $\sum_{i = 1}^{n} \Var[U_i] \leq \sigma^2$, and for which $|U_i| \leq b$ with probability $1$. Then for any $t \geq 0$,
\begin{equation*}
	\mathbb{P}\biggl(\Bigl|\sum_{i = 1}^{n}U_i\Bigr| \geq t \biggr) \leq 2\exp\Bigl(-\frac{t^2}{2(\sigma^2 + bt)}\Bigr)
\end{equation*}
and it follows that with probability at least $1 - \delta$,
\begin{equation}
	\label{eqn:bernstein}
	\Bigl|\sum_{i = 1}^{n}U_i\Bigr| \leq \sigma\sqrt{2\log(2/\delta)} + 2b \log(2/\delta)
\end{equation}	

\paragraph{Bernstein-type inequality for randomly permuted sums.} 
Let $\{b_{ij}\}_{i,j = 1}^{n}$ be an $n \times n$ array of numbers, and $\Pi \sim \mathrm{Unif}(S_n)$ be a randomly chosen permutation. Set $U_i = b_{i,\Pi(i)}$ for $i = 1,\ldots,n$, and suppose the mean $\mathbb{E}[U_i] = 0$, variance $\Var[\sum_{i = 1}^{n}U_i] \leq \sigma^2$ and $|U_i| \leq b$ with probability $1$. Then for any $t \geq 0$,
\begin{equation*}
	\mathbb{P}\biggl(\Bigl|\sum_{i = 1}^{n}U_i\Bigr| \geq t \biggr) \leq 16e^{1/16}\exp\Bigl(-\frac{t^2}{256(\sigma^2 + bt)}\Bigr),
\end{equation*}
and it follows that with probability at least $1 - \delta$,
\begin{equation}
	\label{eqn:bernstein-permuted}
	\Bigl|\sum_{i = 1}^{n}U_i\Bigr| \leq \sigma\sqrt{256\log(16e^{1/16}/\delta)} + 256 b \log(16e^{1/16}/\delta).
\end{equation}

\section{Numerical Experiments: Additional Details}
\label{sec:numerical-experiments-additional-details}

Here we give describe how the binned graph TV test is computed for the simulations in Section~\ref{subsec:simulations}. Recall that the binwidth of the binned graph TV test is taken to be $\varepsilon = 0.02$. Let $N = 1/\varepsilon = 50$. The domain $\Omega = (0,1)^2$ is partitioned into $N^2 = 2500$ bins of equal size. Denote the resulting partition by $\Xi_{\varepsilon}$. Let $G_{\Xi}'$ denote the torus graph over $\Xi_{\varepsilon}$,  and let $P_{\Xi}a$ denote the average of the normalized assignment vector $a$ over cells in $\Xi$. The binned graph TV IPM is then
$$
\max_{\|D_{G_{\Xi}'}\theta\|_1 \leq 1} \theta^{\top} P_{\Xi} a.
$$
The averaging operator $P_{\Xi}$ is defined in Section~\ref{subsec:piecewise-constant-approximation}. The two-dimensional torus graph $G_{\Xi}' = (\Xi,\ttE_{\Xi}')$, where $\{\Delta,\Delta'\} \in G_{\Xi}'$ if and only if $\Delta = \Delta_j, \Delta' = \Delta_{(j \pm e_i) \mod N}$ for $i \in \{1,2\}$. By using the torus graph $G_{\Xi}'$, as opposed to the grid graph $G_{\Xi}$, we avoid giving lower graph TV to vectors $\theta$ supported near the boundary of $\Omega$.

\end{document}